\Crefname{hypothesis}{Hypothesis}{Hypotheses}
\Crefname{claim}{Claim}{Claims}
\crefname{claim}{claim}{claims}
\Crefname{maintheorem}{Main Theorem}{main theorem}
\crefname{construction}{construction}{constructions}
\crefname{conjecture}{conjecture}{conjectures}
\newcommand{\comment}[1]{}
\newcommand {\D}{{\bf D}}
\renewcommand{\P}{{\bf P}}
\newcommand {\from}{\colon}
\newcommand{\into}{{\hookrightarrow}}
\newcommand{\sh}[1]{{\mathcal #1}}        
\DeclareMathOperator{\GL}{GL}
\DeclareMathOperator{\spec}{Spec}
\DeclareMathOperator{\Sym}{Sym}
\DeclareMathOperator{\Ext}{Ext}
\DeclareMathOperator{\rk}{rk}
\DeclareMathOperator{\Proj}{Proj}
\DeclareMathOperator{\Hom}{Hom}
\DeclareMathOperator{\End}{End}
\DeclareMathOperator{\Sing}{Sing}
\renewcommand{\H}{{\mathcal{H}}}
\newcommand{\cO}{\mathcal{O}}
\newcommand{\cE}{E}
\newcommand{\cN}{N}
\renewcommand{\phi}{\varphi}
\renewcommand{\epsilon}{\varepsilon}
\newcommand{\MC}{MC}
\newcommand{\chain}[1]{\P_{#1}}
\title{Invariants of a general branched cover of $\P^1$}
\author{Gabriel Bujokas \& Anand Patel}
\address{Department of Mathematics, Harvard University, Cambridge, MA 02138}
\email{gbujokas@math.harvard.edu}
\address{Department of Mathematics, Boston College, Chestnut Hill, MA 02467}
\email{anand.patel@bc.edu}
\date{\today}
\begin{document}
\begin{abstract}
We investigate the resolution of a \emph{general} branched cover $\alpha \from C \to \P^1$ in its relative canonical embedding $C \subset \P E$.  We conjecture that the syzygy bundles appearing in the resolution are balanced for a general cover, provided that the genus is sufficiently large compared to the degree. We prove this for the \emph{Casnati-Ekedahl} bundle, or \emph{bundle of quadrics} $F$---the first bundle appearing in the resolution of the ideal of the relative canonical embedding. Furthermore, we prove the conjecture for all syzygy bundles in the resolution when the genus satisfies $g =  1 \mod d$.
\end{abstract}
\maketitle


\section{Introduction}
\label{sec:introduction}
Every degree $d$ branched cover $\alpha \from C \to \P^1$ from a genus $g$ curve $C$ has a natural  \emph{relative canonical embedding} $$\iota \from C \hookrightarrow \P E$$ into a projective bundle $\P E$ over $\P^{1}$, where $E$ is the classical \emph{Tschirnhausen bundle} -  the dual of the sheaf of traceless functions on $C$, viewed as an $\cO_{\P^1}$-module.  Geometrically, the scroll $\P E$ is swept out by the spans $$\langle \alpha^{-1}(t) \rangle \subset \P^{g-1}$$ for all $t \in \P^{1}$.  The embedding $\iota$ is such that the projection $\pi \from \P E \to \P^1$ induces the map $\alpha$ on $C$, as illustrated in \cref{figure:cover-in-canonical-scroll}.

\begin{figure}[t]
\begin{center}
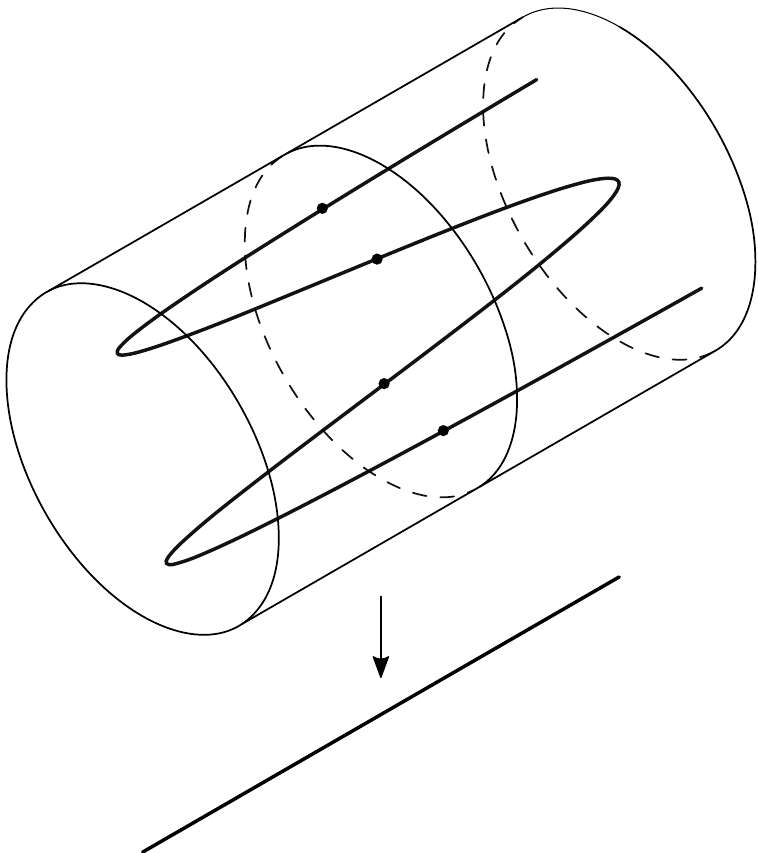
\caption{A cover sitting in its relative canonical scroll.}
\label{figure:cover-in-canonical-scroll}
\end{center}
\end{figure}

The ideal sheaf $\sh I_{C}$ of $C \subset \P E$ has a well known relative minimal resolution, due to a theorem of Schreyer \cite{schreyer_syzygies_1986}, generalized later by Casnati and Ekedahl \cite{casnati_covers_1996}. This resolution of $C \subset \P E$ involves  \emph{syzygy bundles}  $N_{i}$, $i=1, ... , d-2$, which are vector bundles on the target $\P^1$. The splitting types of $N_{i}$ are then fundamental algebro-geometric invariants naturally attached to any branched cover $\alpha \from C \to \P^1$.

Since the set of all branched covers of fixed degree and genus forms the irreducible \emph{Hurwitz space} $\H_{d,g}$, it follows that the syzygy bundles $N_{i}$ have a generic isomorphism type for a Zariski open subset of covers. Our general objective is to understand the generic splitting types of the syzygy bundles.

It is reasonable to expect that the bundles $N_{i}$ are \emph{balanced} for a general cover, in the sense that the largest degree of a summand of $N_{i}$ is at most one more than the smallest degree.  This is succinctly written as 
\[h^{1}(\End N_{i}) = 0.\]

Being balanced is certainly an open condition, in the sense that  in a ``typical''  family of vector bundles on $\P^{1}$, one expects an open set of bundles to be balanced.  With this expectation in mind, and with the aid of explicit examples in low genera and degree, we make the following conjecture:
\begin{conjecture}\label{conj:gen_syzygy}
The general cover $[\alpha \from C \to \P^1] \in \H_{d,g}$ has balanced syzygy bundles when $g \gg d$. 
\end{conjecture}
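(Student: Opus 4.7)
The plan is to reduce the conjecture to the case $g \equiv 1 \pmod d$ proven in this paper, via degeneration in the admissible covers compactification $\Adm$. Since balancedness of a vector bundle on $\P^{1}$ is Zariski open in families, it suffices, for each target $g$, to exhibit a single smoothable cover whose syzygy bundles are balanced and invoke upper semicontinuity of $h^{1}(\End N_{i})$ along a smoothing.

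Concretely, given $g \gg d$, write $g = g_{0} + r$ with $g_{0} \equiv 1 \pmod d$ and $0 \le r < d$. Fix a general cover $\alpha_{0} \from C_{0} \to \P^{1}$ of genus $g_{0}$, whose syzygy bundles $N_{i}(\alpha_{0})$ are all balanced by the main result of this paper. Construct an admissible cover of arithmetic genus $g$ by attaching a small ``tail'' to $\alpha_{0}$---for instance a general genus-$r$ curve glued to $C_{0}$ at a node over a generic point $t \in \P^{1}$, or a chain of rational tails carrying extra ramification accounting for the missing $r$ in genus. This nodal cover is a limit of smooth degree-$d$ covers of genus $g$, and the key step is to track how the Casnati-Ekedahl resolution behaves along a smoothing one-parameter family.

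The heart of the argument is a Mayer-Vietoris-style comparison for the resolution of the nodal curve inside the corresponding nodal projective bundle: the syzygies split into contributions from $\alpha_{0}$ and from the tail, glued by explicit local maps at the nodes. The Tschirnhausen bundle is modified by a controllable elementary transformation along the attachment fiber, and one expects the higher $N_{i}$ to transform by analogous elementary modifications whose twists and positions, chosen generically, preserve balancedness. Combined with semicontinuity, this would yield a Zariski open set of smooth genus-$g$ covers with balanced $N_{i}$.

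The main obstacle is controlling the higher syzygy bundles $N_{i}$ for $i \ge 2$ at the nodal degeneration. Unlike the Tschirnhausen bundle $E$, which has a transparent pushforward description, the $N_{i}$ are constructed iteratively from the Koszul-type structure of $\sh I_{C} \subset \cO_{\P E}$, and the relative canonical embedding of an admissible cover may degenerate in ways that introduce extra local summands in $N_{i}$ concentrated at the node. Such boundary summands can destroy balancedness of the limit bundle even when the generic fibers are balanced, obscuring any naive semicontinuity argument. Overcoming this likely requires a careful analysis of the resolution after partial normalization at the nodes, or a more global argument working directly with the universal cover over $\H_{d,g}$ and using the deformation theory of complexes to show that $h^{1}(\End N_{i})$ jumps only on a proper closed substack.
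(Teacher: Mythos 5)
The statement you are addressing is a \emph{conjecture} in the paper, not a theorem; the paper does not claim a proof of it. What the paper actually establishes is (i) the $N_1$ case (the Main Theorem), proved in Sections 3--5 by degenerating to admissible covers over a chain of $\P^1$'s, developing the criterion of Theorem~\ref{criteria} for balancedness of a vector bundle on a rational chain in terms of transverse directrices, and then solving a degree-2 maximal-rank problem for ``maximally connected chains'' of rational normal curves; and (ii) the full conjecture for all $N_i$ only when $g \equiv 1 \pmod d$ (Proposition~\ref{prop:dkcurves}), by realizing the cover as a $d$-section of a genus-one surface fibration and restricting the relative resolution of that surface. Your proposal correctly identifies these two facts as the available ingredients, but it cannot count as a proof: it stops exactly where the difficulty lies.

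Two concrete defects. First, the degeneration you sketch does not produce the right arithmetic genus. Gluing a genus-$r$ curve to $C_0$ at a single node over one point of $\P^1$ is not a degree-$d$ admissible cover; any tail component must dominate a new $\P^1$ component of the base and hence meet $C_0$ in $d$ points. By the genus formula $g = 1 + \sum_i(g_i-1) + d(n-1)$ used in Section~4, each added component contributes at least $d-1$ to the genus, so you cannot realize $g = g_0 + r$ with $0 < r < d-1$ this way. This is not cosmetic: the reachable genera by the paper's own construction are precisely $g = (a-1)(d-1)+bd$ (Theorem~\ref{theorem:F-bundle}), which is why a separate numerical lemma is needed and why the Main Theorem carries the hypothesis $g \ge (d-3)(d-1)$. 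Second, and more fundamentally, your closing paragraph concedes that you cannot control the behavior of $N_i$ for $i \ge 2$ at the node and that ``extra local summands'' may destroy balancedness in the limit. That is the entire content of the problem. Semicontinuity of $h^1(\End N_i)$ tells you balancedness is open, but it gives nothing unless you actually exhibit a balanced limit; and exhibiting one is what the machinery of Sections~3--5 is designed to do, laboriously, just for $i=1$. The authors themselves do not claim the analogous control for higher $N_i$, which is why the conjecture remains open. Your proposal is a reasonable description of a strategy, but it contains an explicit gap rather than a proof, and the construction you suggest for the degenerate cover is not an admissible cover with the stated genus.
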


\begin{remark}
Before considering the syzygy bundles $N_{i}$, one might ask about the generic behavior of the Tschirnhausen bundle $E$.  A theorem of Coppens \cite{coppens_existence_1999} and Ballico \cite{ballico_remark_1989} shows that $E$ is balanced for a generic cover.
\end{remark}

\begin{remark}
\autoref{conj:gen_syzygy} bears a striking resemblence to Green's conjecture on syzygies of canonical curves.  It seems difficult to precisely identify the precise relationship between the two conjectures. \cite{Bopp-comm}.
\end{remark}

This paper focuses specifically on the first bundle $N_{1}$, which we refer to simply as $F$.   $F$ is known as the \emph{bundle of quadrics} or the \emph{Casnati-Ekedahl bundle} - for simplicity we will often  call it the $F$-bundle.  The geometric interpretation of $F$ is very simple: $F$ parametrizes the vector space of quadrics containing the $d$ points of $C$ in the $\P^{d-2}$ fibers of $\P E$.

In recent work of Bopp and Hahn \cite{bopp_syzygies_2014}, the precise necessary relationship between $g$ and $d$ is studied within the regime where the Brill-Noether number satisfies $\rho(g,d,1) \geq 0$ and $g > d+1$.

In this $(g,d)$-regime, they conclude that the generic bundle of quadrics $N_{1}$ is balanced if and only if $\rho > 0$ and $(k- \rho - \frac{7}{2})^{2} - 2d + \frac{23}{4} > 0$. 

Furthermore, with the use of their \emph{Macaulay2}-Package \cite{bopp_macaulay2_2015}, their experimentation leads them to make the following refinement of \cref{conj:gen_syzygy}: 
\begin{conjecture}\label{conj:ref_gen_syzygy}(Bopp, Hahn \cite{bopp_syzygies_2014})
\begin{enumerate}
\item If $\rho \leq 0$ then the bundle $N_{1}$ is balanced for a general cover $[\alpha \from C \to \P^{1}] \in \H_{d,g}$.
\item If $\rho \geq 0$, and if we write $N_{i} = \oplus \cO_{\P^{1}}(a^{(i)}_{j})$, then the following bound holds for a general $\alpha \in \H_{d,g}$: 
\[\max_{j,l}|a^{(i)}_{j} - a^{(i)}_{l}| \leq \min\{g-d-1, i+2\}. \]
\end{enumerate}

\end{conjecture}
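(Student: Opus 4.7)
My plan is to attack both parts of the conjecture through a degeneration argument on the Hurwitz space $\H_{d,g}$, combined with the upper semicontinuity of $h^{1}(\End N_{i})$. Since balancedness is an open condition, and more generally the inequality $\max_{j,l}|a^{(i)}_{j} - a^{(i)}_{l}| \leq \min\{g-d-1, i+2\}$ is an open condition on the splitting type of a vector bundle on $\P^{1}$, it suffices to exhibit, for each $(g,d)$, at least one admissible cover at which the conjectured bound holds, and then spread the inequality to the general point of $\H_{d,g}$.

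The first step is to focus initially on $F = N_{1}$, which enjoys the cleanest geometric description: the fiber of $F$ over $t \in \P^{1}$ is the space of quadrics in $\P^{d-2}$ containing the $d$ points $\alpha^{-1}(t)$. This realizes $F$, up to twist, as the kernel of a multiplication map $\Sym^{2} E \to \alpha_{*}\omega_{\alpha}^{\otimes 2}$. Since $E$ is balanced for a general $\alpha$ by the Coppens--Ballico theorem, the splitting of $\Sym^{2} E$ is already under control, so computing $F$ on a general cover reduces to showing that the multiplication map behaves as generically as possible.

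The next step is to engineer a degeneration to an admissible cover whose source $\tw{C}$ is nodal, chosen to make the $N_{i}$ explicitly computable---say a chain of low-genus covers glued at the preimages of nodes in a reducible base, so that the relative canonical embedding and its ideal restrict to pieces one already understands. In the regime $\rho \leq 0$, the Brill--Noether count is tight enough that a very particular chain---perhaps one where each piece is a maximally general cover of smaller genus---should suffice for part (1). In the regime $\rho \geq 0$, part (2) requires longer chains and a more careful interpolation between pieces in order to match the bound $\min\{g-d-1, i+2\}$ exactly.

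The principal obstacle, in my view, is controlling the higher syzygy bundles $N_{i}$ for $i \geq 2$. They lack the direct geometric meaning enjoyed by $F$, so computing them on a degeneration requires iterating the Casnati--Ekedahl resolution and tracking splitting data at each step, which can drift unpredictably under specialization; semicontinuity gives only one-sided control on $h^{1}(\End N_{i})$. The asymmetric bound $\min\{g-d-1, i+2\}$ suggests that two regimes require separate treatment: for small $i$ the constant $i+2$ dominates and presumably reflects the ``depth'' of the $i$-th syzygy in the resolution, while for $i$ near $d-2$ the genus-dependent $g-d-1$ dominates and likely arises from a Serre-duality identification relating $N_{i}$ and $N_{d-2-i}$. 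Extracting both sharp bounds simultaneously is what makes the Bopp--Hahn refinement substantially more delicate than the weaker balancedness statement of \cref{conj:gen_syzygy}.
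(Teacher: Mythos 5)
The statement you are attempting to prove is a \emph{conjecture} of Bopp and Hahn, reproduced in this paper precisely because it is open, and the paper itself does not prove it. What the paper does prove is strictly weaker: the Main Theorem establishes balancedness of $N_{1}$ alone (not the sharp bound of part (2), and not the higher $N_{i}$) under the hypothesis $g \geq (d-3)(d-1)$, and \cref{prop:dkcurves} handles all $N_{i}$ but only in the arithmetic progression $g \equiv 1 \bmod d$. Note also that $\rho(g,d,1) \leq 0$ is equivalent to $g \geq 2d-2$, while $(d-3)(d-1) = d^{2}-4d+3 > 2d-2$ once $d \geq 6$; so even part (1) of the conjecture, restricted to $N_{1}$, is only partially covered by the Main Theorem. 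Your proposal therefore cannot ``agree with the paper's proof,'' because no such proof exists.

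As for the proposal itself: the semicontinuity framing is sound (both $h^{1}(\End N_{i})=0$ and the bound $\max|a^{(i)}_{j}-a^{(i)}_{l}|\leq C$ are open conditions on $\H_{d,g}$, so exhibiting a single admissible cover realizing the bound would suffice), and the degeneration strategy you sketch for $F=N_{1}$ does parallel the paper's actual argument for the Main Theorem: glue low-genus covers over a chain of $\P^{1}$'s, reduce balancedness of $F$ to a maximal-rank statement about quadrics through maximally connected chains of rational normal curves, and verify that maximal-rank statement by a secondary induction. But you stop at exactly the point where the real difficulty begins. You correctly identify the obstacle --- controlling $N_{i}$ for $i\geq 2$ on a degeneration, where the syzygy bundles have no clean secant-variety interpretation analogous to ``quadrics through $d$ points'' --- and then you do not overcome it; you merely describe why it is hard and speculate that the two terms in $\min\{g-d-1,\,i+2\}$ should come from the depth of the resolution and from the symmetry $N_{i}\leftrightarrow N_{d-2-i}$. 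That speculation is plausible, but it is not a proof, and the paper likewise makes no attempt on this. Your proposal is an outline of an attack, not an argument, and the central lemma one would need --- a maximal-rank or regularity statement for the entire Casnati--Ekedahl resolution of the degenerate cover, not just its quadratic strand --- is neither supplied by you nor available in the literature cited here.
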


\begin{remark}
Notice that the $g = d+1$ case of part $(2)$ is an instance of  \cref{prop:dkcurves}.
\end{remark}

  Our main result is the proof of the $N_{1}$ case of \autoref{conj:gen_syzygy}:
\begin{maintheorem}\label{thm:gen_CE}
Let $g \geq (d-3)(d-1)$. Then the bundle of quadrics $F$ of a general cover $\alpha \in  \H_{d,g}$ is balanced.
\end{maintheorem}

The proof proceeds via degeneration.  We begin in the first section by reviewing the Casnati-Ekedahl structure theorem, and then studying the bundle of quadrics for covers of genus $0$ and $1$. We then glue such covers together in a ``chain'' to create an admissible cover $\alpha \from X \to P$ where $P$ is a chain of $\P^1$'s.

Once we have constructed this cover, we must argue that its $F$-bundle, a vector bundle on the chain of rational curves $P$, is balanced.
In order to do this, we first provide a general criterion for a vector bundle $V$ on a chain $P$ to satisfy $h^{1}(\End V) = 0$.
\Cref{sec:vector-bundles-on-rational-chains} is devoted to establishing this criterion.  

We are then left with the task of verifying that these conditions are met for the $F$-bundle of $\alpha \from X \to P$.  This, in turn, leads us to considering a degree $2$ maximal rank problem for ``maximally connected chains'' of rational normal curves in projective space.  \Cref{sec:reduction} is devoted to establishing the connection between our original problem of showing $h^{1}(\End F) = 0$ with the degree $2$ maximal rank problem.  
We settle the aforementioned maximal rank problem in \cref{sec:the_maximal_rank_problem_for_maximally_connected_chains}.

Throughout, we will work over an algebraically closed field $k$ of characteristic zero.

\subsection*{Acknowledgments}
We would like to thank Christian Bopp for interesting conversations and for sharing his joint work with Michael Hahn.  We thank Eric Larson for suggesting \cref{construction:eric}.   We also had several conversations with Anand Deopurkar, Joe Harris and Eric Riedl which were very helpful. We also thank Carolina Yamate for her crucial assistance in producing the pictures for this paper.


\section{Preliminaries}
\label{sec:preliminaries}
We review the basic Casnati-Ekedahl structure theorem of branched covers of algebraic varieties. We then examine the Casnati-Ekedahl resolution of covers $\alpha \from C \to \P^1$ when the genus $g(C)$ is $0$ or $1$. 
\subsection{The Casnati-Ekedahl Resolution}
 Let $X$ and $Y$ be integral schemes and $\alpha \from X \to Y$ a finite flat Gorenstein morphism of degree $d \geq 3$. The map $\alpha$ gives an exact sequence
\begin{equation}\label{structure sheaf sequence}
  0 \to \cO_Y \to \alpha_* \cO_X \to {\cE_\alpha}^\vee \to 0,
\end{equation}
where $\cE = \cE_\alpha$ is a vector bundle of rank $(d-1)$ on $Y$, called the \emph{Tschirnhausen bundle} of $\alpha$. Denote by $\omega_\alpha$ the dualizing sheaf of $\alpha$. Applying $\Hom_Y(-,\cO_Y)$ to \eqref{structure sheaf sequence}, we get
\begin{equation}\label{dual structure sheaf sequence}
  0 \to \cE \to \alpha_* \omega_\alpha \to \cO_Y \to 0.
\end{equation}
The map $\cE \to \alpha_* \omega_\alpha$ induces a map $\alpha^* \cE \to \omega_\alpha$. 

\begin{theorem}\label{thm:CE}
  \cite[Theorem~2.1]{casnati_covers_1996}
  In the above setup, $\alpha^* \cE \to \omega_\alpha$ gives an embedding $\iota \from X \to \P \cE$ with $\alpha  = \pi \circ \iota$, where $\pi \from \P \cE \to Y$ is the projection. Moreover, the subscheme $X \subset \P \cE$ can be described as follows.
\begin{enumerate}[label=(\alph*)]
\item The resolution of $\cO_X$ as an $\cO_{\P \cE}$-module has the form
\begin{equation}\label{eqn:casnati_resolution}
  \begin{split}
    0 \to \pi^* \cN_{d-2} (-d) \to \pi^* \cN_{d-3}(-d+2) \to \pi^*\cN_{d-4}(-d+3) \to \dots \\
    \dots \to \pi^*\cN_2(-3) \to \pi^*\cN_1(-2) \to \cO_{\P \cE} \to \cO_X \to 0,
  \end{split}
\end{equation}
where the $\cN_i$ are vector bundles on $Y$. Restricted to a point $y \in Y$, this sequence is the minimal free resolution of length $d$ zero dimensional scheme $X_y \subset \P \cE_y\cong \P^{d-2}$.
\item \label{thm:CE item rank} The ranks of the $\cN_i$ are given by
  \[ \rk \cN_i = \frac{i(d-2-i)}{d-1} \binom{d}{i+1},\]
\item We have $\cN_{d-2} \cong \det \cE$.
\item Furthermore, the resolution is symmetric, that is, isomorphic to the resolution obtained by applying the functor
$\Hom_{\cO_{\P \cE}}(-, \pi^* \cN_{d-2}(-d))$.
\end{enumerate}
\end{theorem}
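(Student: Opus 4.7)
The plan is to prove this in three pieces: first, that $\iota$ is a closed embedding; second, the existence and shape of the resolution in (a); third, the numerical and duality statements (b)--(d). All three reduce to the fiberwise picture of a length-$d$ Gorenstein zero-dimensional subscheme embedded in $\P^{d-2}$ by (traceless sections of) its dualizing sheaf, and the global statements follow by flatness of $\alpha$.

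For the embedding, I would first verify that $\alpha^*\cE \to \omega_\alpha$ is surjective: pushing forward, the map $\cE \to \alpha_*\omega_\alpha$ is by construction the kernel of the trace in \eqref{dual structure sheaf sequence}, so its image after adjunction generates $\omega_\alpha$ iff the ``traceless'' global sections of $\omega_{X_y}$ generate $\omega_{X_y}$ for each fiber. This holds for length-$d$ Gorenstein $0$-schemes because $\omega_{X_y}$ is already generated by its $d$ global sections, and the single trace condition still leaves a basepoint-free $(d-1)$-dimensional subspace. The induced fiberwise map $X_y \to \P^{d-2}$ is then the classical canonical embedding of a Gorenstein $0$-scheme, which is a closed immersion. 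Fiberwise closed immersion plus properness of $\pi\circ\iota = \alpha$ gives the global closed embedding.

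Next, for the resolution, I would work fiberwise first: a length-$d$ nondegenerate Gorenstein subscheme $X_y \subset \P^{d-2}$ has a symmetric Hilbert function of the form $(1,d-1,d-1,\ldots,d-1,1)$, forcing the shape of its minimal free resolution to match \eqref{eqn:casnati_resolution} on the fiber; in particular the ideal is cut out by quadrics, linear syzygies appear from degree $3$ onward, and the resolution has length $d-2$. A binomial computation (a second-difference of Hilbert functions, or use of the Herzog--K\"uhl formula for pure resolutions) then yields the rank formula (b). I would upgrade this to a relative resolution on $\P\cE$ by constructing each $\cN_i$ as an appropriate relative $\Tor$ or $\Ext$ sheaf and using flatness of $\alpha$, together with base change, to identify stalks with the fiberwise Betti spaces and conclude that $\cN_i$ is locally free of the expected rank; minimality of \eqref{eqn:casnati_resolution} follows by lifting a fiberwise minimal basis at each step.

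Finally, the self-duality (d) drops out of applying $\Hom_{\cO_{\P\cE}}(-,\pi^*\cN_{d-2}(-d))$ to \eqref{eqn:casnati_resolution} and noting that both the original and the dualized complex are minimal free resolutions of $\cO_X$, hence isomorphic, by the fiberwise Gorenstein self-duality globalized via flatness. For (c), tracking the determinant of the alternating sum in \eqref{eqn:casnati_resolution} and comparing with $\det(\alpha_*\cO_X) = \det\cE^\vee$ from \eqref{structure sheaf sequence}, using $\omega_{\P\cE/Y} \cong \pi^*(\det\cE)^\vee(-(d-1))$ and relative adjunction, forces $\cN_{d-2}\cong\det\cE$. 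The main obstacle I anticipate is constancy of the fiberwise Betti numbers across all of $Y$ --- one must rule out ``jumping'' at special fibers where $X_y$ might fail to be in linearly general position. The cleanest fix is that the relevant relative $\Ext$ sheaves are coherent on $Y$, and because $\alpha$ is flat and Gorenstein the dimensions of their fibers cannot jump, so their ranks are forced to equal the generic ones; making this precise (and in particular ensuring no special fiber admits extra low-degree syzygies) is where the real technical content sits.
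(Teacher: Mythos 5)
This statement is cited by the paper from \cite[Theorem~2.1]{casnati_covers_1996}; the paper itself gives no proof, so there is nothing internal to compare your sketch against. Your outline does track the fiber-then-globalize strategy of the original Casnati--Ekedahl argument, so let me flag two places where your reasoning, as written, is not adequate.

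First, the surjectivity and very-ampleness of $\alpha^*\cE \to \omega_\alpha$: asserting that ``the single trace condition still leaves a basepoint-free $(d-1)$-dimensional subspace'' is the conclusion, not an argument. Cutting a $d$-dimensional generating system down by one hyperplane does not automatically preserve basepoint-freeness. What makes it work is the Gorenstein perfect trace pairing $\alpha_*\cO_X \otimes \alpha_*\omega_\alpha \to \cO_Y$: failure of basepoint-freeness (or of separating points/tangent vectors) of $\cE_y \subset H^0(\omega_{X_y})$ at $p$ would correspond, via the pairing, to a relation involving $1 \in H^0(\cO_{X_y})$ that forces the trace hyperplane to contain the ``evaluation at $p$'' hyperplane, and one rules this out using $d \geq 3$ and characteristic zero. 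You need to spell out this duality step; it is the content of the embedding claim.

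Second, and more seriously: your proposed resolution of the ``jumping'' obstacle --- ``because $\alpha$ is flat and Gorenstein the dimensions of their fibers cannot jump'' --- is not a valid general principle. Flatness of $\alpha$ gives you flatness of $\cO_X$ over $Y$, but fiber dimensions of the associated cohomology/Ext sheaves are only \emph{upper semicontinuous} in general, so jumping is exactly what flatness alone does not prevent. The correct reason the relative resolution exists with locally free $\cN_i$ of constant rank is that the fiberwise Betti table is literally the same for \emph{every} $y \in Y$, not just generic $y$: the Artinian reduction of each fiber $X_y \subset \P^{d-2}$ is a Gorenstein algebra with $h$-vector $(1,d-2,1)$, i.e.\ of the form $R/\operatorname{Ann}(F)$ for a quadratic form $F$, and nondegeneracy of $X_y$ (which is forced, since $\cE$ has rank exactly $d-1$) forces $F$ to have full rank; over an algebraically closed field of characteristic zero all such $F$ are equivalent, so the Betti numbers are uniquely determined. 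Once every fiber has the same Betti table, coherence and base change do let you assemble the $\cN_i$ and conclude local freeness. Your sketch actually contains this fiberwise rigidity argument in nascent form (you compute the symmetric Hilbert function and invoke Herzog--K\"uhl), but you then frame constancy as an open worry rather than deducing it; the correct logic is that rigidity of the fiber, not an abstract semicontinuity principle, is what closes the gap.

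Parts (b), (c), (d) of your sketch are fine modulo the above: (b) follows from the forced Betti table by the second-difference computation, (d) from the fact that applying $\Hom(-,\pi^*\cN_{d-2}(-d))$ produces another minimal free resolution of $\cO_X$, and (c) from the determinant bookkeeping you describe (or directly from self-duality in the last step).
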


\begin{remark}
We will only use the case $Y=\P^1$, which was discovered by  Schreyer in \cite[Corollary 4.4]{schreyer_syzygies_1986}.
\end{remark}

The branch divisor of $\alpha \from X \to Y$ is given by a section of $(\det \cE)^{\otimes 2}$. In particular, if $X$ is a curve of (arithmetic) genus $g$, $\alpha$ has degree $d$, and $Y = \P^1$, then 
\[
 \rk \cE = d-1 \text{ and } \deg \cE = g+d-1.
 \]

We will be concerned with the vector bundle $N_{1}$, which from here onwards we denote by $F$, and refer to as the \emph{bundle of quadrics}.

Notice that, by twisting the Casnati-Ekedahl resolution \eqref{eqn:casnati_resolution} by $\cO_{\P E}(2)$ and then applying $\pi_{*}$, we obtain an exact sequence 
\begin{equation}\label{Fsequence}
0 \to F \to \Sym^{2}E \to \alpha_{*}(\omega_{\alpha}^{\otimes 2}) \to 0.
\end{equation}
From this sequence, it is easy to see that $\rk F = d(d-3)/2$ and $\deg F = (d-3)(g+d-1)$.

\Cref{thm:CE} allows us to associate to every cover $[\alpha \from C \to \P^1] \in \H_{d,g}$ a collection of vector bundles $(E, F, ... , N_{d-2})$ on $\P^1$. \Cref{conj:gen_syzygy} expresses the natural expectation that these vector bundles are all \emph{balanced} for a general cover $\alpha$ (we say a vector bundle $V$ is balanced if $h^{1}(\End V) = 0$).

To fix notation, we let $\zeta$ denote the divisor class associated to the line bundle $\cO_{\P E}(1)$ on a projective bundle $\P E$.  We consider $\P E$ to be the scheme $\Proj (\Sym^{*}E)$, as in \cite{hartshorne_algebraic_1977}.  We let $f$ denote the class of a fiber of the projection $\pi \from \P E \to \P^1$.

\subsection{Evidence} We explain the rationale behind \cref{conj:gen_syzygy}.

 First, when the degree is at most  $5$, it is known that $F$ is generically balanced:  \cite{deopurkar_picard_2015,bopp_syzygies_2014}.   In these cases, $F$ is (up to twist) the only bundle appearing in the Casnati-Ekedahl resolution, due to the symmetry of the resolution. 
 
 When $d=6$ and $g=4$, the $F$-bundle is generically \emph{not} balanced: 
\begin{example}\label{6,4}
Let $\alpha \from C \to \P^1$ be a general degree $6$ cover, with $C$ a genus $4$, 
non-hyperelliptic curve.  Then $E =   \cO(1) \oplus \cO(2)^{\oplus 4}$. Since $F$ is a sub vector bundle of $\Sym^{2}E$, we conclude that the degree of any summand of $F$ cannot exceed $4$.  

On the other hand, since $\deg F = 27$ and $\rk F = 9$,  the bundle $F$ is imbalanced if and only if $\cO(4)$ is a summand of $F$.

We show that $H^0(F(-4))\neq 0$ by considering the sequence \eqref{Fsequence}. We want
to show that the map $H^0(\P^1, \Sym^2E(-4))  \to H^0( \P^1, \alpha_*(\omega_\alpha) (-4))$ is not injective. But we can identify this map with:
\begin{equation}
\label{eq:map}
H^0(\P E, 2\zeta-4f) \to H^0(C, (2\zeta-4f)|_C)
\end{equation}

The linear system $|\zeta-2f|$ on $\P E$ restricts to the full canonical series on $C$.  Furthermore, the sections of $2\zeta- 4f$ are obtained by taking sums of products of sections of $|\zeta - 2f|$.  Since the canonical model of $C$ lies on a unique quadric in $\P^3$, we see that there is a unique element of $|2\zeta- 4f|$ containing $C$.  This means that the map \eqref{eq:map} is not injective, which implies that $F$ contains an $\cO(4)$ summand. Therefore $F$ is not balanced.
\end{example}

The above example suggests that when the genus is small compared to the degree, the $F$-bundle may be imbalanced. This example can be generalized as long as $g$ is somewhat small compared to $d$, and it explains the inequality $g \gg d$ in 
\cref{conj:gen_syzygy}. 

Substantial support for the conjecture is provided by the following proposition: 

\begin{proposition}\label{prop:dkcurves}
\Cref{conj:gen_syzygy} is true whenever $g = 1 \mod d$. In fact, for a general cover $\alpha \from C \to \P^{1}$, the syzygy bundles $N_{i}$ are \emph{perfectly} balanced, i.e. \[N_{i} = \cO_{\P^{1}}(k_{i})^{\oplus r_{i}}\] for some integers $k_{i}, r_{i}$.
\end{proposition}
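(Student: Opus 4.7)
Proof strategy. We argue by exhibiting a single smooth cover in $\H_{d,g}$ whose syzygy bundles are perfectly balanced; openness of being balanced in flat families of bundles on $\P^1$ of fixed rank and degree (combined with the numerical observation that $\rk N_i$ divides $\deg N_i$ when $g\equiv 1\bmod d$) then implies the same for the generic cover. The numerical divisibility is checked by twisting the Casnati--Ekedahl resolution \eqref{eqn:casnati_resolution} by $\cO_{\P E}(m)$ and extracting $\chi(N_i)$ as a polynomial in $g$ and $d$.

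Base case ($g=1$). For a general degree-$d$ cover $\beta\from \Gamma\to \P^1$ from an elliptic curve, the Tschirnhausen bundle is $E_\beta=\cO(1)^{\oplus(d-2)}\oplus\cO(2)$, and $\omega_\beta = \beta^*\cO_{\P^1}(2)$ since $\omega_\Gamma$ is trivial. Hence $\beta_*(\omega_\beta^{\otimes 2}) = \cO(4)\oplus\cO(3)^{\oplus(d-2)}\oplus\cO(2)$ by the projection formula, and the sequence $0\to F \to \Sym^2 E_\beta \to \beta_*(\omega_\beta^{\otimes 2}) \to 0$ together with its analogues for the higher $N_i$ (obtained by twisting the resolution and pushing forward) and the self-duality $N_{d-2-i}\cong N_i^\vee\otimes\det E_\beta$ determine each $N_i(\beta)\cong \cO_{\P^1}(c_i)^{\oplus r_i}$ by a direct splitting-type computation.

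Degeneration and smoothing. For $g=1+dk$ with $k\geq 1$, chain together $k+1$ general elliptic covers $\beta_j\from \Gamma_j\to \P^1_j$ to build an admissible cover $\alpha_0\from X_0\to P_0$, where $P_0$ is a chain of $k+1$ copies of $\P^1$ and the gluing at each node identifies the $d$ fiber points by a generically chosen bijection. The arithmetic genus calculation gives $p_a(X_0)=1+dk$. Invoking the balance criterion for vector bundles on rational chains of \Cref{sec:vector-bundles-on-rational-chains}, together with the base case applied componentwise, we check that each syzygy bundle $N_i(\alpha_0)$ is perfectly balanced on $P_0$. Finally, any smoothing of $\alpha_0$ to a one-parameter family of smooth covers in $\H_{d,g}$ yields, on the generic fiber and by upper semicontinuity of $h^1(\End N_i)$, a smooth cover whose syzygy bundles are balanced---hence perfectly balanced, since $\rk N_i$ divides $\deg N_i$.

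Expected main obstacle. The main technical step is the base case $g=1$: each $N_i(\beta)$ must be pinned down from the CE resolution of the elliptic cover, and the self-duality only determines half of the $N_i$ in terms of the others. A secondary difficulty is that the chain criterion of \Cref{sec:vector-bundles-on-rational-chains} must be applied in the admissible-cover setting, which requires controlling the transition data of the local syzygy bundles at each node.
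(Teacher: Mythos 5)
The paper's proof is a direct construction, not a degeneration: it realizes a \emph{smooth} cover of every admissible genus $g = d(m-1)+1$ as a bidegree $(d,m)$ curve $C$ on a product surface $E\times\P^1$ with $E$ elliptic, embeds $E\times\P^1$ as a constant family of elliptic normal curves in $\P^{d-1}\times\P^1$, and then slices the pulled-back (hence perfectly balanced) relative resolution by the relative hyperplane $\Lambda$ cutting out $C$. The whole point is Lemma~\ref{lem:ellipticnormal}: the minimal free resolution of an elliptic normal curve is known to have all summands of the same twist in each step, and restricting it to a hyperplane gives the minimal resolution of the hyperplane section. Your proposal takes a genuinely different route (degenerate to a chain of elliptic covers, apply the chain criterion of \Cref{sec:vector-bundles-on-rational-chains}, then smooth), but it has gaps where the paper's method has none.

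The decisive gap is the base case $g=1$. You correctly flag that the duality $N_{d-2-i}\cong N_i^\vee\otimes\det E$ only pins down half the $N_i$ in terms of the other half, and "a direct splitting-type computation" is not actually exhibited for $N_2,\dots,N_{d-3}$; the short exact sequence you write only determines $F=N_1$. To close this gap you would in effect need the classical fact that elliptic normal curves have resolutions with constant twists in each homological degree and a mechanism to transfer that to the CE resolution of the projected cover---which is exactly the paper's Lemma~\ref{lem:ellipticnormal} together with the elliptic surface construction, applied at $m=1$. But once one has that machinery, it handles every $g\equiv 1\bmod d$ uniformly, so the degeneration becomes an unnecessary detour. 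Two further points would also need work in your approach: (i) you must verify that the CE syzygy bundles of the nodal admissible cover $\alpha_0$ restrict on each component $P_j$ to the CE syzygy bundles of the component cover $\beta_j$ (this compatibility is used but not established), and (ii) the divisibility $\rk N_i\mid\deg N_i$ when $g\equiv 1\bmod d$, which your smoothing step needs to promote "balanced" to "perfectly balanced," is asserted but not computed. In the paper's proof that divisibility is a by-product rather than an input, since the constructed $N_i$ is literally $\cO(k_i)^{\oplus r_i}$. One small correct observation in your favor: once the components have perfectly balanced restrictions, the transverse-directrix condition in Theorem~\ref{criteria} is vacuous, so the "secondary difficulty" you list about transition data is less serious than you suggest---the chain bundle is automatically balanced.
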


We'll need the following basic lemma about elliptic normal curves of degree $d$ in $\P^{d-1}$:
\begin{lemma}\label{lem:ellipticnormal}
Let $E \subset \P^{d-1}$ be a degree $d$ smooth elliptic normal curve, let $H$ be a hyperplane in $\P^{d-2}$, and let $Z := H \cap E$.  
\begin{enumerate}[label=(\alph*)]
\item If 
\begin{equation}\label{resolutionE}
0 \to F_{1} \to ... \to F_{k}\to {\sh I}_{E \subset \P^{d-1}} \to 0
\end{equation}
is the minimal free resolution of $E \subset \P^{d-1}$, then 
\[0 \to F_{1}|_{H} \to ... \to F_{k}|_{H} \to  {\sh I}_{Z \subset H} \to 0\] is the minimal free resolution of $Z \subset H$. 
\item The syzygy bundles $F_{i}$ appearing in the minimal free resolution \eqref{resolutionE} are of the form $\cO_{\P^{d-1}}(-m_{i})^{\oplus r_{i}}$ for some integers $m_{i}$ and $r_{i}$, i.e. all summands of $F_{i}$ have the same degree.
\end{enumerate}

\end{lemma}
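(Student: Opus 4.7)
For part (a), I would exploit that an elliptic normal curve $E \subset \P^{d-1}$ is arithmetically Cohen-Macaulay: a complete linear system of degree $d \geq 3$ on an elliptic curve gives a projectively normal embedding, which combined with the vanishing of $H^1(\sh I_E(n))$ for all $n$ yields ACM. Let $\ell$ be the linear form cutting out $H$. Since $E$ is integral and $E \not\subset H$, $\ell$ is a non-zero-divisor on $\cO_E$, so $\Tor_1^{\cO_{\P^{d-1}}}(\cO_E, \cO_H) = 0$; higher Tors vanish because $\cO_H$ admits a length-one free resolution on $\P^{d-1}$. Tensoring the extended resolution $0 \to F_1 \to \cdots \to F_k \to \cO_{\P^{d-1}} \to \cO_E \to 0$ with $\cO_H$ therefore preserves exactness, yielding a free resolution of $\sh I_{Z \subset H}$. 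Minimality persists because the entries of the differentials are homogeneous forms of positive degree, which remain in the irrelevant maximal ideal after restriction to $H$.

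For part (b), the main input is Green--Lazarsfeld's theorem: a line bundle of degree $\geq 2g+1+p$ on a smooth curve of genus $g$ satisfies property $N_p$. Taking $g=1$ gives that $E \subset \P^{d-1}$ satisfies $N_{d-3}$, which forces every syzygy bundle in the minimal free resolution of $\sh I_E$, with the possible exception of the last nonzero one, to be of the form $\cO_{\P^{d-1}}(-m)^{\oplus r}$ with all summands of a single degree.

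The last syzygy bundle is then handled by self-duality. Since $E$ is arithmetically Gorenstein with $\omega_E \cong \cO_E$, Grothendieck duality identifies $\sh{Ext}^{d-2}(\cO_E, \cO_{\P^{d-1}}(-d)) \cong \omega_E = \cO_E$. Applying $\Hom(-,\cO(-d))$ to the minimal free resolution of $\cO_E$ therefore produces another minimal free resolution of $\cO_E$, and by uniqueness the original resolution satisfies $F_i^\vee \otimes \cO(-d) \cong F_{d-2-i}$. In particular, the last nonzero term is a single copy of $\cO(-d)$, which is rank one and hence trivially pure.

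The step requiring the most care is verifying minimality after the relevant operations: restriction to $H$ in part (a) and dualization in part (b). Both reduce to checking that the entries of the differentials remain in the irrelevant ideal after the operation, which is standard but easy to mishandle.
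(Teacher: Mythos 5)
Your proposal is correct. For part (a), the argument is essentially the one in the paper: the paper forms a three-row diagram from the exact sequence $0 \to \sh{I}_E(-H) \to \sh{I}_E \to \sh{I}_{Z \subset H} \to 0$ and the twisted copies of the minimal resolution, then concludes exactness of the bottom row by chasing; that diagram chase is precisely a concrete rendering of your Tor-vanishing argument, since the injectivity of $\sh{I}_E(-H) \to \sh{I}_E$ and the exactness of the column on the right encode $\Tor_1(\cO_E, \cO_H) = 0$. Both arguments quietly rely on arithmetic Cohen-Macaulayness to guarantee that the module being resolved after tensoring with $\cO_H$ is the \emph{saturated} ideal of $Z$ in $H$ rather than merely something with the same sheafification; you flag this at the start, the paper leaves it implicit. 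Both minimality arguments are fine.

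For part (b) the paper offers no argument at all, only a citation to Eisenbud's book for the Betti table of an elliptic normal curve. You supply a genuine proof: the Green--Lazarsfeld theorem gives property $N_{d-3}$ for a degree-$d$ line bundle on a genus-one curve, which forces the first $d-3$ syzygy modules to be concentrated in a single degree, and then Gorenstein self-duality of the resolution (using that $E$ is ACM with $\omega_E \cong \cO_E$, hence arithmetically Gorenstein) identifies the final term as a single copy of $\cO_{\P^{d-1}}(-d)$. This is a different and more self-contained route than the paper's reference, and it has the advantage of explaining \emph{why} the Betti table has this shape rather than just reporting it. The only hypothesis you need that the paper's citation glosses over is smoothness of $E$ for Green--Lazarsfeld, which is within the lemma's stated assumptions.
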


\begin{proof}
There is an exact sequence
\[
	0 \to \sh{I}_{E \subset \P^{d-1}}(-H) \to \sh{I}_{E \subset \P^{d-1}} \to \sh{I}_{Z \subset H} \to 0.
\]
Take the minimal resolution of $\sh{I}_{E \subset \P^{d-1}}$:
\[
	0 \to  F_1 \to ... \to F_{k} \to \sh{I}_{E \subset \P^{d-1}} \to 0
\]
And form the following diagram
\[
\xymatrix{
0\ar[r] & F_1(-H)\ar[r] \ar[d] & \ldots\ar[r] & F_k(-H)\ar[r]\ar[d] & \sh{I}_{E \subset \P^{d-1}}(-H)\ar[d]\ar[r] & 0 \\
0\ar[r] & F_1\ar[r] \ar[d] & \ldots\ar[r] & F_k\ar[r]\ar[d] & \sh{I}_{E \subset \P^{d-1}}\ar[d] \ar[r] & 0\\
0\ar[r] & F_1|_H\ar[r]  & \ldots\ar[r] & F_k|_H\ar[r] & \sh{I}_{Z \subset H} \ar[r] & 0\\
}
\]
The top two rows are exact, and so is every column. Therefore, the bottom row is exact as well, and hence a resolution
for $\sh{I}_{Z \subset H}$. The bottom resolution is minimal because the degrees of the $F_{k}$ strictly increase with $k$.  

The second part of the lemma is a well-known computation of the minimal free resolution of an elliptic normal curve found, for example, in \cite{eisenbud_commutative_????}.
\end{proof}

\begin{proof}[Proof of \cref{prop:dkcurves}]
We essentially apply \cref{lem:ellipticnormal} relatively over $\P^{1}$.  Let $C \subset E \times \P^1$ be a smooth bi-degree $(d,m)$ curve; this means that $C$ intersects the elliptic curve rulings $d$ times.  These intersections are all in the same linear system $|D|$ of degree $d$ on $E$.   By adjunction, the curve $C$ is seen to have genus $g = d(m-1) + 1$.

We can embed the surface $E \times \P^1$ into $\P^{d-1} \times \P^1$ where the first factor is embedded by the linear system $|D|$, i.e. $E \times \P^1$ is a constant family of degree $d$ elliptic normal curves in $\P^{d-1} \times \P^1$. 

Let $\pi \from \P^{d-1} \times \P^1 \to \P^1$ denote the projection.  Then the ideal sheaf $\sh I_{E \times \P^1}$ has a ``relative'' minimal resolution which is pulled back via the projection to $\P^{d-1}$:
\begin{equation}\label{resolutionEP}
0 \to F_{1} \to ... \to F_{k}\to {\sh I}_{E \times \P^{1} \subset \P^{d-1} \times \P^{1}} \to 0
\end{equation}

Let $h$ denote the pullback of the hyperplane class on $\P^{d-1}$ to the product $\P^{d-1} \times \P^{1}$, and let $f$ denote the class of a fiber of $\pi$. Then the curve $C \subset E \times \P^{1}$ is the intersection of $E \times \P^{1}$ with some relative hyperplane divisor $\Lambda \subset \P^{d-1} \times \P^{1}$ with divisor class $h+mf$.  

The curve $C$, viewed as a subscheme of the $\P^{d-2}$-bundle $\Lambda$, is a branched cover in its relative canonical embedding. This follows because, by \cref{lem:ellipticnormal}, the fibers $C \cap \P^{d-2} \subset \P^{d-2}$ are length $d$ arithmetically Gorenstein subschemes of $\P^{d-2}$, a property which characterizes the relative canonical embedding \cite{casnati_covers_1996}.

We then restrict sequence \eqref{resolutionEP} to the relative hyperplane $\Lambda$, and use \cref{lem:ellipticnormal} in each fiber of $\pi$ to conclude that 
\begin{equation}\label{resolutionELambda}
0 \to F_{1}|_{\Lambda} \to ... \to F_{k}|_{\Lambda}\to {\sh I}_{C  \subset \Lambda} \to 0
\end{equation}
is the Casnati-Ekedahl resolution of $C \subset \Lambda$. If we write \[F_{i}|_{\Lambda} = \pi^{*}N_{i}(-m_{i})\] we see that each bundle $N_{i}$ is in fact \emph{perfectly} balanced, i.e. \[N_{i} = \cO_{\P^{1}}(k_{i})^{\oplus r_{i}}\] for some integer $k_{i}$, where $r_{i}$ is given in \cref{thm:CE item rank} of \Cref{thm:CE}.  

\end{proof}

\begin{remark}  \Cref{prop:dkcurves} and \cref{lem:ellipticnormal} indicate a direct relationship between invariants of branched covers and properties of genus $1$ fibrations.  

Let $f \from S \to \P^{1}$ be a generically smooth family of arithmetic genus $1$ curves, and let $C \subset S$ be a smooth $d$-section of the map $f$, i.e. $C$ is a smooth curve whose intersection number with the fibers of $S$ is $d$. 

Then the pair $(f \from S \to \P^{1}, C)$ gives rise to the natural embedding 
\begin{equation}\label{embedgenusone}
S \hookrightarrow \P(f_{*}\cO_{S}(C))
\end{equation}
over $\P^{1}$. The projective bundle $\P(f_{*}\cO_{S}(C))$  is a $\P^{d-1}$-bundle, and each fiber of $f$ embeds as a possibly singular degree $d$ elliptic normal curve. Furthermore, there exists a relative hyperplane $\Lambda \subset \P(f_{*}\cO_{S}(C))$ whose intersection with $S$ is precisely the curve $C$.  

The main observation, following from \cref{lem:ellipticnormal}, is that the curve $C$, viewed as a branched cover of $\P^{1}$, is embedded via its relative canonical embedding in the $\P^{d-2}$-bundle $\Lambda$. 

As an analogy, the reader should recall the relationship between the syzygies of a canonical curve $C \subset \P^{g-1}$ and a $K3$ surface $S \subset \P^{g}$ containing $C$.  A generic version of Green's conjecture was proved by Voisin in \cite{voisin_greens_2002} using this relationship.

A dimension count says that a general branched cover $\alpha \from C \to \P^{1}$ does not occur as a $d$-section in a genus $1$ fibration.  However, the proof of \cref{prop:dkcurves} shows us that \emph{if the syzygy bundles for $S \subset \P(f_{*}\cO_{S}(C))$ are balanced, then the same follows for $C \subset \Lambda$.} 

This suggests a more attractive approach for proving \cref{conj:gen_syzygy}:  Show that the generic pair $(f \from S \to \P^{1}, C)$ has balanced syzygies in its relative embedding \eqref{embedgenusone}.  This is the subject of future work.
\end{remark}

\subsection{Covers of genus \texorpdfstring{$0$}{zero} and \texorpdfstring{$1$}{one}.} 
We will now gather some relevant facts about genus $0$ and genus $1$ covers of $\P^1$.

\begin{lemma}\label{lowG}
Let $\alpha \colon R \to \P^1$ and $\beta \from X \to \P^1$ be degree $d$ covers where $R$ is a smooth rational curve and $X$ is a smooth elliptic curve. Then:
\begin{enumerate} [label=(\alph*)]
\item \label{item-E-rational} $E_{\alpha} = \sh O(1)^{\oplus d-1}$,
\item \label{item-F-rational}  $F_{\alpha} = \sh O(1)^{\oplus d-3} \oplus \sh O(2)^{\oplus \binom{d-2}{2}}$
\item \label{item-E-elliptic} $E_{\beta} = \sh O(1)^{\oplus d-2} \oplus \sh O(2)$
\item \label{item-F-elliptic} $F_{\beta} =  \sh O(2)^{\oplus \frac{d(d-3)}{2}}$
\end{enumerate}
\end{lemma}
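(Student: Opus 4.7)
The four statements split into two groups, which I would handle in order. Parts (a) and (c) compute the Tschirnhausen bundle $E$; parts (b) and (d) compute the bundle of quadrics $F$, of which (d) is essentially immediate and (b) requires the real work.

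For (a) and (c), I would use the defining sequence \eqref{structure sheaf sequence}. The trace $\alpha_*\cO_C \to \cO_{\P^1}$ gives a canonical splitting, so $h^i(E^\vee) = h^i(\cO_C) - h^i(\cO_{\P^1})$. Writing $E^\vee = \bigoplus_{i=1}^{d-1} \cO(-b_i)$ on $\P^1$, the vanishing of $h^0(E^\vee)$ forces every $b_i \geq 1$; then $h^1(E^\vee) = \sum(b_i - 1) = g(C)$. For rational $R$, this forces each $b_i = 1$, giving $E_\alpha = \cO(1)^{\oplus d-1}$; for elliptic $X$, exactly one $b_i = 2$ and the rest equal $1$, giving $E_\beta = \cO(1)^{\oplus d-2} \oplus \cO(2)$.

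For (d), I would appeal to the construction in the proof of \Cref{prop:dkcurves}. Every degree-$d$ elliptic cover $\beta \from X \to \P^1$ realizes $X$ as the graph of $\beta$ inside $X \times \P^1$, a smooth bi-degree $(d, 1)$ curve---the $m = 1$ case of the proposition. Hence the syzygy bundles of the Casnati--Ekedahl resolution of $\beta$ pull back from the resolution of $X \subset \P^{d-1}$ as a degree-$d$ elliptic normal curve, and by \Cref{lem:ellipticnormal}(b) are uniform. Therefore $F_\beta = \cO(k)^{\oplus r}$, and the rank $d(d-3)/2$ and degree $d(d-3)$ read off from \eqref{Fsequence} force $k = 2$.

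For (b), I would use \eqref{Fsequence}, which by (a) reads
\[ 0 \to F_\alpha \to \cO_{\P^1}(2)^{\oplus \binom{d}{2}} \to \alpha_* \omega_\alpha^{\otimes 2} \to 0. \]
The inclusion already shows every summand of $F_\alpha$ has degree $\leq 2$; the content is to compute $h^1(F_\alpha(-k))$ for $k = 2, 3$ from the long exact sequence. The $k = 3$ twist is immediate: $\Sym^2 E_\alpha(-3) = \cO(-1)^{\oplus \binom{d}{2}}$ has trivial cohomology, so $h^1(F_\alpha(-3)) = \chi(\cO_R(d-4)) = d - 3$. The hard step is $h^1(F_\alpha(-2)) = 0$, which amounts to surjectivity of the restriction $H^0(\P E_\alpha, 2\zeta - 2f) \to H^0(R, \omega_\alpha^{\otimes 2} \otimes \alpha^* \cO(-2))$. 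Because $E_\alpha = \cO(1)^{\oplus d-1}$, the scroll identifies as $\P E_\alpha \cong \P^1 \times \P^{d-2}$ with $\cO_{\P E_\alpha}(1) = \cO(1,1)$, making the source $H^0(\P^{d-2}, \cO(2))$ and the target $H^0(\P^1, \cO(2d-4))$. The crucial geometric input is that the arithmetically Gorenstein property of the fibers of $\alpha$ (from \Cref{thm:CE}) forces $\pi_2(R) \subset \P^{d-2}$ to be non-degenerate, so the $d-1$ coordinate pullbacks span $H^0(R, \cO_R(d-2))$. Hence $\pi_2|_R$ is defined by the complete linear system $|\cO_R(d-2)|$ and realizes $R$ as a rational normal curve $\Gamma$ of degree $d-2$. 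The restriction then factors as
\[ H^0(\P^{d-2}, \cO(2)) \twoheadrightarrow H^0(\Gamma, \cO_\Gamma(2)) \isom H^0(R, \cO_R(2d-4)), \]
the first map surjective by projective normality of the RNC and the second an isomorphism. Thus $h^1(F_\alpha(-2)) = 0$, so every summand of $F_\alpha$ has degree $\geq 1$. Combining the two computations, $F_\alpha$ has precisely $h^1(F_\alpha(-3)) - h^1(F_\alpha(-2)) = d - 3$ summands of degree $1$ and, by rank, $\binom{d-2}{2}$ summands of degree $2$.

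The main obstacle will be the surjectivity step in (b); once the image of $R$ in $\P^{d-2}$ is identified with a rational normal curve via the Gorenstein condition on the fibers, the remaining bookkeeping is routine.
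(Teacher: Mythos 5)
Your proposal is correct, and it matches the paper's argument closely on parts (a)--(c) while taking a genuinely different route on (d).

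For (a) and (c) you use exactly the paper's mechanism: the trace splitting of \eqref{structure sheaf sequence} gives $H^0(E^\vee)=0$, so all summands of $E$ are positive. (The paper dispatches (c) by noting the degrees must sum to $d$; your supplementary $h^1(E^\vee)=g$ computation is equivalent.) For (b) you use the same geometric reduction as the paper---identify $\P E_\alpha\cong\P^1\times\P^{d-2}$, observe that the second projection restricted to $R$ is the rational normal curve embedding of degree $d-2$ (non-degeneracy coming from the arithmetically Gorenstein fibers), and invoke projective normality of the RNC---but you organize the cohomological bookkeeping differently: the paper computes $h^0(F(-2))=\binom{d-2}{2}$ as a kernel and then uses the degree and rank to pin down the splitting, whereas you establish $h^1(F(-2))=0$ and $h^1(F(-3))=d-3$ and read off the splitting from those. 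Both are complete. The real divergence is (d): the paper embeds $X$ as an elliptic normal curve in $\P^{d-1}$, realizes $\beta$ as projection from a $(d-3)$-plane $\Lambda$, identifies $\P E_\beta$ with $\mathrm{Bl}_\Lambda\P^{d-1}$, and argues directly with quadrics in $\P^{d-1}$ via the linear systems $|2\zeta-2f|$ and $|2\zeta-3f|$; you instead observe that $\beta$ is the $m=1$ case of the bi-degree $(d,m)$ construction in the proof of \Cref{prop:dkcurves}, so that purity of $F_\beta$ follows from \Cref{lem:ellipticnormal}(b), with rank and degree then forcing $k=2$. The paper itself remarks that part (d) also follows from \Cref{prop:dkcurves}, so your shortcut is legitimate (and there is no circularity, since both \Cref{prop:dkcurves} and \Cref{lem:ellipticnormal} precede \Cref{lowG}); the trade-off is that the paper's argument is self-contained and more explicitly geometric (it shows where the quadrics live), while yours is shorter but leans on the earlier machinery as a black box.
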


\begin{proof} 
For \cref{item-E-rational}, note that the sequence
\[
0 \to \sh{O}_{\P^1} \to \alpha_*\sh{O}_C \to E_{\alpha}^\vee \to 0
\]
splits, and therefore $H^0(E_{\alpha}^\vee)=0$. This means all summands of $E_{\alpha}$ are positive and add up to $ d-1$.  Therefore, all summands must have degree one.

For \cref{item-F-rational}, note that using the relative canonical factorization $\iota \from R \to \P E$, we may think of $R$ as lying inside $\P E$. The series $|\zeta - f|$ on $\P E$ restricts to the complete series $\sh O_{R}(d-2)$, and the morphism $q \from \P E \to \P^{d-2}$ given by the series $|\zeta - f|$ restricts to the embedding of $R$ into $\P^{d-2}$ as a rational normal curve. 

The rational normal curve $R$ is contained in a $\binom{d-2}{2}$ dimensional space of quadrics.  The divisor class of these quadrics, when pulled back along $q$, is $2\zeta - 2f$.  

Recall the exact sequence on the target $\P^1$: 
\begin{equation}\label{FEexact}
0 \to F_{\alpha} \to \Sym^{2}E_{\alpha} \to \alpha_{*}{\omega^{2}_{\alpha}} \to 0.
\end{equation}
We twist by $\sh O(-2)$ and consider global sections, to get:
\begin{align*}
H^0(\P^1, F_{\alpha}(-2)) = & \ker (H^0(\P^1, \Sym^2E_{\alpha} (-2)) \to 
H^0(\P^1, \alpha_{*}{\omega^{2}_{\alpha}}(-2))) \\
= &\ker (H^0(\P E_\alpha, 2\zeta-2f) \to H^0(R, (2\zeta -2f)|_R)) \\
= & \ker (H^0(\P^{d-2}, \sh{O}_{\P^{d-2}}(2)) \to H^0(R,\sh{O}_{\P^{d-2}}(2)|_R ))  
\end{align*}

The previous paragraph along with projective normality of $R \subset \P^{d-2}$ imply that $h^{0}(F_{\alpha}(-2)) = \binom{d-2}{2}$.  We conclude by noting that \eqref{FEexact} shows that no degree of a summand of $F_{\alpha}$ may exceed $2$, and the degree of $F_{\alpha}$ must be $(d-3)(d-1)$.  This forces its splitting type to be $\sh O(1)^{\oplus d-3} \oplus \sh O(2)^{\oplus \binom{d-2}{2}}$.

For \cref{item-E-elliptic}, we note that all $d-1$ summands of $E_{\beta}$ are positive, and their degrees sum to $d$.  Therefore $E_{\beta}$ must be as indicated.

Finally, for \cref{item-F-elliptic}, see that the pencil of sections of $\beta^{*}\sh O_{\P^{1}}(1)$ on $X$ forms a two dimensional vector subspace of its complete linear system.  The complete series gives an embedding of $X$ into the projective space $\P^{d-1}$, and the map $\beta$ is realized as projection from a general $(d-3)$-dimensional linear space $\Lambda \subset \P^{d-1}$ disjoint from $X$. 

We note that $\P E$, as an abstract scroll, is isomorphic to $Y := Bl_{\Lambda}\P^{d-1}$. The linear system $|\zeta - f|$ provides the map $f \from \P E \to \P^{d-1}$.  Furthermore, the linear system $|2\zeta - 2f|$ parametrizes the quadric hypersurfaces in $\P^{d-1}$.  Therefore, $h^{0}(F_{\beta}(-2))$ is simply the vector space dimension of quadrics in $\P^{d-1}$ containing the elliptic normal curve $X \subset \P^{d-1}$, which is easily calculated to be $d(d-3)/2$. 

Next we show that $h^{0}(F_{\beta}(-3)) = 0$. An element of the linear system $|2\zeta - 3f|$ corresponds to a quadric in $\P^{d-1}$ which splits off a  hyperplane component $\Gamma$ containing $\Lambda$.  No such quadric can contain the elliptic normal curve $X$, by nondegeneracy of the curve.  This means  $h^{0}(F_{\beta}(-3)) = 0$.  

Therefore, the largest degree summand of $F_{\beta}$ is $\cO(2)$. Since the degree of $F_{\beta}$, by sequence \eqref{FEexact}, is $d(d-3)$ we conclude that $F_{\beta}$ must split as $\cO(2)^{\oplus \frac{d(d-3)}{2}}$.
\end{proof}
\begin{remark}
Of course \cref{lowG} part $(d)$ also follows from the much more general \cref{prop:dkcurves}. 
\end{remark}


\section{Vector bundles on rational chains}
\label{sec:vector-bundles-on-rational-chains}
In this section we find necessary and sufficient conditions for determining when a vector bundle on a chain of rational curves is balanced.  We will eventually use these criteria to prove \cref{theorem:F-bundle}.  

Let $\chain{} = P_{1} \cup P_{2} \cup \ldots \cup P_{k}$ be a chain of $k$ rational curves $P_{i}$, and let $V$ be a vector bundle on $\chain{}$ of rank $r$.  We let $V_{i} = V|_{P_{i}}$ be the restriction to the $i$-th component, and we denote by $p_{i}$ the node $P_{i} \cap P_{i+1}$, for $i=1,\ldots,k-1$.  
\begin{definition}
A vector bundle $V$ on a reduced curve $C$ is \emph{balanced} if $$h^{1}(\End V) = 0.$$ 
\end{definition}
Since the first order deformations of a vector bundle $V$ are parametrized by $\Ext^1(V,V) = H^{1}(\End V)$, we see that a balanced vector bundle does not deform.  Our goal is to determine necessary and sufficient criteria for a vector bundle $V$ on the chain $\chain{}$ to be balanced in terms of the ``relative position'' of the vector bundles $V_{i}$. 

Our first easy observation is the following.
\begin{lemma}
\label{lemma-balanced-components}
Let $V$ be a vector bundle on $\chain{}$ of rank $r$. If $V$ is balanced, then every component $V_{i}$ is a balanced vector bundle on $P_{i}$.
\end{lemma}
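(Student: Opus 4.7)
The plan is to deduce $h^{1}(\End V_{i}) = 0$ from $h^{1}(\End V) = 0$ by studying the restriction morphism $\End V \to \End(V|_{P_{i}}) = \End V_{i}$. Since $V$ is locally free, this restriction is surjective, with kernel $\sh{I}_{P_{i}} \cdot \End V$, where $\sh{I}_{P_{i}}$ denotes the ideal sheaf of $P_{i}$ inside $\chain{}$. This yields a short exact sequence of coherent sheaves on $\chain{}$:
\[
0 \to \sh{I}_{P_{i}} \cdot \End V \to \End V \to \End V_{i} \to 0.
\]

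First, I would pass to the long exact sequence in cohomology, extracting the portion
\[
H^{1}(\End V) \to H^{1}(\End V_{i}) \to H^{2}\bigl(\sh{I}_{P_{i}} \cdot \End V\bigr).
\]
The key observation is that $\chain{}$ is a projective scheme of dimension one, so $H^{2}$ of any coherent sheaf on $\chain{}$ vanishes. Consequently the restriction map on $H^{1}$ is surjective, and the hypothesis $H^{1}(\End V) = 0$ forces $H^{1}(\End V_{i}) = 0$, i.e. $V_{i}$ is balanced.

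There is no serious obstacle here; the argument is formal. The only thing to check carefully is the identification $(\End V)|_{P_{i}} \cong \End(V|_{P_{i}})$, which is immediate because $V$ is locally free and formation of $\sh{H}\!om$ commutes with tensor product by a quotient of the structure sheaf when the first argument is locally free. I would present the proof in essentially the two sentences above.
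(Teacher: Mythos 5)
Your argument is correct, but it takes a genuinely different route from the paper's. You cut out the component $P_i$ via the ideal sheaf sequence
\[
0 \to \sh{I}_{P_{i}} \cdot \End V \to \End V \to \End V_{i} \to 0,
\]
and conclude by the dimension-one vanishing $H^{2} = 0$ on $\chain{}$ (together with the identification $H^{1}(\chain{}, (\iota_i)_* \End V_i) \cong H^1(P_i, \End V_i)$, since the inclusion is affine). The paper instead uses the normalization $\nu \from \coprod_i P_i \to \chain{}$ and the sequence
\[
0 \to \End V \to \nu_{*}\nu^{*}(\End V) \to \oplus_{i} \End(k(p_{i})^{r}) \to 0,
\]
where the cokernel is a skyscraper sheaf supported at the nodes; the vanishing driving the argument is then $H^1$ of that skyscraper. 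Both proofs are short, formal, and correct. The difference worth noting is that the paper's normalization sequence is doing additional work later in the section: its long exact sequence in cohomology is what produces the ``difference map'' $D \from \oplus_i H^0(P_i, \End V_i) \to \oplus_i \End(k(p_i)^r)$ whose surjectivity characterizes balancedness and which underlies the entire transversality analysis. Your sequence establishes the lemma cleanly, but it is purely one-directional and does not set up the structure needed for the rest of \cref{criteria}; it handles one component at a time, whereas the normalization handles all components and encodes the gluing data at the nodes simultaneously.
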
 
\begin{proof}
Let $\nu \from \coprod_{i} P_{i} \to \chain{}$ be the total normalization map. Consider the exact sequence of $\sh{O}_{\chain{}}$-sheaves 
\begin{equation}\label{norm}
0 \to \End V \to \nu_{*}\nu^{*}(\End V) \to \oplus_{i} \End (k(p_{i})^{r}) \to 0.
\end{equation}
 The last part of the long exact sequence of cohomology looks like \[\ldots \to H^{1}(\chain{}, \End V) \to H^{1}(\chain{}, \nu_{*}\nu^{*}\End V)  \to 0.\]  Since $\nu$ is finite, we can identify the vector space $H^{1}(\chain{}, \nu_{*}\nu^{*}\End V)$ with $H^{1}(\coprod P_{i}, \nu^{*}\End V) = \oplus_{i}H^{1}(P_{i}, \End V_{i})$.  Therefore, if $H^{1}(\chain{}, \End V) = 0$, then $\oplus_{i}H^{1}(P_{i}, \End V_{i}) = 0$, which proves the lemma.
\end{proof}

We now assume that each component $V_{i}$ is balanced.  Consider the long exact sequence of cohomology groups associated to \eqref{norm}: 
\begin{equation}\label{les}
\ldots \to \oplus_{i}H^{0}(P_{i}, \End V_{i}) \to \oplus_{i} \End (k(p_{i})^{r}) \to H^{1}(P, \End V) \to 0
\end{equation}
This sequence shows that $h^{1}(P, \End V) = 0$ if and only if the map \[D \from \oplus_{i}H^{0}(P_{i}, \End V_{i}) \to \oplus_{i} \End (k(p_{i})^{r})\] is surjective.  The map $D$ is described as follows.  Let $\infty_i \in P_{i}$, $0_{i+1} \in P_{i+1}$ denote the two pre images of the node $p_{i}$ in the normalization $\coprod P_{i}$. For any $(M_{1}, \ldots ,M_{k}) \in \oplus_{i}H^{0}(P_{i}, \End V_{i})$,
\[
D(M_{1}, \ldots ,M_{k}) = (M_{1}|_{\infty_{1}} - M_{2}|_{0_{2}}, \ldots , M_{k-1}|_{\infty_{k-1}} - M_{k}|_{0_{k}}).
\]
In other words, since the structure sheaves $k(\infty_i)$ and $k(0_{i+1})$ are naturally identified with $k(p_{i})$ via the normalization map $\nu$, we may compare the restriction of any section $M_{i} \in H^{0}(P_{i}, \End V_{i})$ at $\infty_i$ with the restriction of any section $M_{i+1} \in H^{0}(P_{i+1}, \End V_{i+1})$ at $0_{i+1}$.  For this reason, we call $D$ the \emph{difference} map.  (We are using the letter $M$ to emphasize that we are thinking of these sections as $r \times r$ matrices.)

Recall that the first order deformation space of a vector bundle $V$ is naturally identified with the vector space $H^{1}(\End V)$.  Consider the connecting homomorphism  \[\oplus_{i} \End (k(p_{i})^{r}) \to H^{1}(P, \End V).\]  There is a very simple way of interpreting this map in terms of the deformations of $V$. A vector bundle $V$ on $\chain{}$ is equivalent to the data of the $k$ vector bundles $(V_{1},\ldots, V_{k})$ and gluing maps $(g_{1}, \ldots ,g_{k-1})$, where $g_{i} \from V_{i}|_{\infty_i} \to V_{i+1}|_{0_{i+1}}$ are invertible linear maps.  We can deform this data by simply varying the gluing maps, viewed as invertible $r \times r$ matrices.  The tangent space to $\GL_{r}(k)$ is the space $M_{r \times r}(k)$ of $r \times r$ matrices, and the resulting $k-1$-tuple of of elements in $M_{r \times r}$ correspond to components in the direct sum $\oplus_{i} \End (k(p_{i})^{r})$.  Therefore, by considering different gluing data, we think of the condition of being balanced as a condition about the ``relative position'' of the $V_{i}$ being general.  We will make this more precise in the next subsection.

\subsection{Directrices and Filtrations} A balanced bundle on $\P^1$ has a canonically defined \emph{directrix} subbundle. 
\begin{definition}
Let $V = \sh O(m)^{\oplus a} \oplus \sh O(m+1)^{\oplus b}$ be a balanced vector bundle on $\P^1$.  Then the \emph{directrix} subbundle $W$ is the summand $\sh O(m+1)^{\oplus b}$. 
\end{definition}

Now let us mark two points $0$ and $\infty$ on $\P^1$, and let $[s : t]$ be homogeneous coordinates on $\P^1$ with $0 = [0 :1]$ and $\infty = [1 :0 ]$. Suppose $V$ is a balanced vector bundle on $\P^1$ and $W \subset V$ its directrix subbundle. Of course, there is no canonical identification of $V|_{0}$ with $V|_{\infty}$, since the vector bundle is not trivial. 
However, since $W = \sh O(m+1)^{\oplus b}$, the subbundle $\P W \subset \P V$ \emph{is} trivial.
Therefore, the subspace $W|_{0} \subset V|_{0}$ naturally corresponds to the subspace $W|_{\infty} \subset V|_{\infty}$, \emph{as a subspace}.  Similarly, any linear subspace $L|_{0} \subset W|_{0}$ corresponds to a unique subspace $L_{\infty} \subset W_{\infty}$.

By the same token, since $V/W = \sh O(m)^{\oplus a}$, we see that any subspace $N|_{0} \subset V/W|_{0}$ can be naturally identified with a subspace $N|_{\infty} \subset V/W|_{\infty}$.  To summarize we have established the following.
\begin{lemma}\label{transport}
Let $W \subset V$ be as above.  Then subspaces of $W|_{0}$ are in natural one to one correspondence with subspaces of $W|_{\infty}$.  Furthermore, intermediate subspaces $W|_{0} \subset N_{0} \subset V|_{0}$ are in natural one to one correspondence with intermediate subspaces $W|_{\infty} \subset N_{\infty} \subset V|_{\infty}.$
\end{lemma}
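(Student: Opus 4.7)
The plan is to construct the correspondence canonically by passing to the space of global sections of an appropriate trivialization of $W$. Write $W = \cO_{\P^1}(m+1)^{\oplus b}$ and twist by $\cO(-m-1)$ to obtain a trivial bundle $W(-m-1) \cong \cO_{\P^1}^{\oplus b}$, whose $b$-dimensional space of global sections I will call $\Gamma$. Because $W(-m-1)$ is trivial, the evaluation map at any point $p \in \P^1$ is an isomorphism $\Gamma \isom W(-m-1)|_p$. Tensoring back with $\cO(m+1)|_p$ yields a canonical identification $W|_p \cong \Gamma \otimes_k \cO(m+1)|_p$.

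Now I would observe that the one-dimensional tensor factor $\cO(m+1)|_p$, while not canonically trivialized, only contributes an overall scalar and so does not affect the poset of linear subspaces: subspaces of $W|_p$ correspond canonically (via taking preimage under the evaluation followed by inverting the scalar) to subspaces of $\Gamma$. Applying this to $p=0$ and $p=\infty$ composes to give a natural bijection between subspaces of $W|_0$ and subspaces of $W|_\infty$. Concretely, a subspace $L_0 \subset W|_0$ corresponds to $L_\infty \subset W|_\infty$ when both arise as fibers of a subbundle of $W$ of the form $\cO(m+1)^{\oplus \dim L_0} \hookrightarrow W$ determined by a linear subspace of $\Gamma$.

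For the second assertion, I apply exactly the same argument to the quotient bundle $V/W \cong \cO_{\P^1}(m)^{\oplus a}$, which is likewise a direct sum of copies of a single line bundle and hence has a canonical bijection between subspaces of $(V/W)|_0$ and $(V/W)|_\infty$ arising from its $a$-dimensional space of sections after twisting by $\cO(-m)$. Intermediate subspaces $W|_p \subset N_p \subset V|_p$ are in bijection with arbitrary subspaces of $(V/W)|_p$ via the quotient map $V|_p \twoheadrightarrow (V/W)|_p$, and combining the two bijections gives the stated correspondence for intermediate subspaces.

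The proof is essentially formal and I do not anticipate any real obstacle; the only point deserving attention is the non-canonical trivialization of the fiber $\cO(m+1)|_p$, but since we are only comparing linear subspaces, the residual scalar ambiguity is harmless. The entire content of the lemma is the observation that direct sums of copies of a single line bundle on $\P^1$ have, up to scalars, a canonical global frame coming from their global sections after an appropriate twist.
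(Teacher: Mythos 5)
Your proof is correct and follows essentially the same route as the paper's: the paper observes that $\P W \subset \P V$ is a trivial projective subbundle because $W$ is a twist of the trivial bundle, and uses this to transport subspaces between fibers, which is exactly your global-sections-after-twisting argument spelled out in more detail. The treatment of intermediate subspaces via the quotient bundle $V/W = \cO(m)^{\oplus a}$ is likewise identical to the paper's.
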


\begin{definition}
Let $F^{\bullet}$ be an increasing filtration of a vector space $A$, and let $B \subset A$ be a subspace.  We say that $F^{\bullet}$ \emph{contains} $B$ if $B = F^{i}$ for some $i$. 
\end{definition}

\begin{corollary}\label{flag_transport}
The filtrations $F^{\bullet}_{0}$ of $V|_{0}$ containing $W|_{0}$ are in natural one to one correspondence with the filtrations $F^{\bullet}_{\infty}$ of $V|_{\infty}$ containing $W|_{\infty}$.
\end{corollary}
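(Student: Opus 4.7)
The plan is to reduce the corollary directly to \cref{transport} by splitting a filtration containing $W|_0$ at that very step. Given a filtration $F^0_0 \subset F^1_0 \subset \cdots \subset F^n_0 = V|_0$ with $F^j_0 = W|_0$ for some index $j$, I would write it as the concatenation of the ``lower'' part $F^0_0 \subset \cdots \subset F^j_0 = W|_0$, which is entirely a filtration of subspaces of $W|_0$, and the ``upper'' part $W|_0 = F^j_0 \subset \cdots \subset F^n_0 = V|_0$, which is entirely a filtration of intermediate subspaces between $W|_0$ and $V|_0$. \cref{transport} supplies a canonical bijection for the terms in each of the two parts.

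Next I would check that the map sending each $F^i_0$ to the corresponding subspace $F^i_\infty$ respects inclusions, so that the resulting collection on the $\infty$-side is still a flag. This is immediate from the way the correspondence is constructed: a subspace $L_0 \subset W|_0$ extends uniquely to a constant sub-bundle $\underline{L} \subset W$ under the trivialization of $\P W$, and the assignment $L_0 \mapsto L_\infty = \underline{L}|_\infty$ obviously preserves containment. The same argument applies to intermediate subspaces using the trivialization of the quotient $V/W$. At the junction $F^j_0 = W|_0$, both correspondences agree because the map built from $W$ and the map built from intermediate subspaces both send $W|_0$ to $W|_\infty$.

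Finally, the inverse correspondence is constructed symmetrically by starting from a filtration of $V|_\infty$ containing $W|_\infty$, splitting it at $W|_\infty$, and applying the inverse bijections from \cref{transport}. The two constructions are mutually inverse on each piece, so they yield mutually inverse bijections on the full filtrations, completing the proof. I do not foresee any real obstacle here—the corollary is essentially a bookkeeping consequence of \cref{transport}, and the only thing requiring attention is to verify that the natural bijections on subspaces of $W$ and on intermediate subspaces glue compatibly at $W$ itself, which is tautological.
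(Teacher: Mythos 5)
Your proposal is correct and is essentially the argument the paper has in mind: the corollary is stated without proof as an immediate consequence of \cref{transport}, and your splitting of the filtration at the step $W|_0$, followed by applying the lemma's bijections on the two halves and checking compatibility at the junction, is exactly that implicit argument spelled out.
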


We let \[r \from \big\{\text{filt. $F^{\bullet}_{0}$ of $V|_{0}$ containing $W|_{0}$ }\big\} \to \big\{\text{filt. $F^{\bullet}_{\infty}$ of $V|_{\infty}$ containing $W|_{\infty}$}\big\}\] denote the natural correspondence, and let $l$ denote its inverse. We call $r(F^{\bullet}_{0})$ the \emph{right transport} of $F^{\bullet}_{0}$, and $l(F^{\bullet}_{\infty})$ the \emph{left transport} of $F^{\bullet}_{\infty}$

For a balanced vector bundle $V$ with directrix $W \subset V$, we define \[G^{\bullet}(V):= \{0 \subset W \subset V\}\] to be the \emph{directrix flag} of $V$.  Furthermore, we let $G^{\bullet}(V|_{0})$ and $G^{\bullet}(V|_{\infty})$ denote the respective flags in $V|_{0}$ and $V|_{\infty}$.

\begin{definition}\label{refine}
Let $F^{\bullet} = \{0 = F^{0} \subset F^{1} \subset \ldots \subset F^{N} = A\}$ and $G^{\bullet} =\{0  \subset B \subset A\}$ be two filtrations of a $k$-vector space $A$. Then the \emph{modification of $F^{\bullet}$ by $G^{\bullet}$}, denoted $F \vee G$, is the flag with elements $(F \vee G)^{i} = F^{i} \cap B$ for $i \leq N$, and $(F \vee G)^{j} = B+F^{j-N}$ for $j \geq N$.  
\end{definition}

\subsubsection{The two natural filtrations at a node.}
\label{subsub-the-two-natural-filtrations}
Now let $\chain{} := P_1 \cup P_2 \cup \ldots \cup P_k$ be a chain of $\P^1$'s.  Recall that we let $0_{i}$ and $\infty_{i}$ denote marked points $0$ and $\infty$ on the respective $P_{i}$, and $p_{j}$ be the node joining $P_{j}$ with $P_{j+1}$, so that $p_{j} = \infty_{j} = 0_{j+1}$.  

Let $V$ be a rank $r$ vector bundle on $\chain{}$.  We continue to assume that each $V_{i}$ is balanced, and let $W_{i} \subset V_{i}$ be the $i$-th directrix subbundle.  

At each node $p_{j}$ there are two natural filtrations of $V|_{p_{j}}$, called the \emph{left filtration} $L^{\bullet}_{j}$ and the \emph{right filtration} $R^{\bullet}_{j}$, which we describe inductively.  Let $L^\bullet_1 =G^\bullet(V_1|_0)$,
and inductively define  $L^{\bullet}_{j}$ as the filtration given by 
\[
L^{\bullet}_{j} := r\left(L^\bullet_{j-1} \wedge G^\bullet(V_j|_0) \right) \text{.}
\] 
In words, we sequentially right transport and modify the directrix flags of the vector bundles $V_{i}$, beginning at $V_{1}$ and ending at $V_{j}$.  The filtration $R^{\bullet}_{j}$ is defined similarly, except we begin at $V_{k}$ and then sequentially left transport and modify the directrix subbundles until we reach $V_{j+1}$: 
\[
R^\bullet_j=l\left(R^\bullet_{j+1}\wedge G^\bullet(V_{j+1}|_\infty) \right)
\]

The essential point is that the collection of filtrations $L^{\bullet}_{j}$ and $R^{\bullet}_{j}$ completely captures the ``relative position'' of the system of directrices $W_{i} \subset V_{i}$.  

\begin{definition}
We say that a vector bundle $V$ on $\chain{} = P_{1} \cup \ldots \cup P_{k}$ as above \emph{has transverse directrices} if the filtrations $L^{\bullet}_{j}$ and $R^{\bullet}_{j}$ meet transversely for each node $p_{j}$. 
\end{definition}

\begin{figure}
\begin{center}
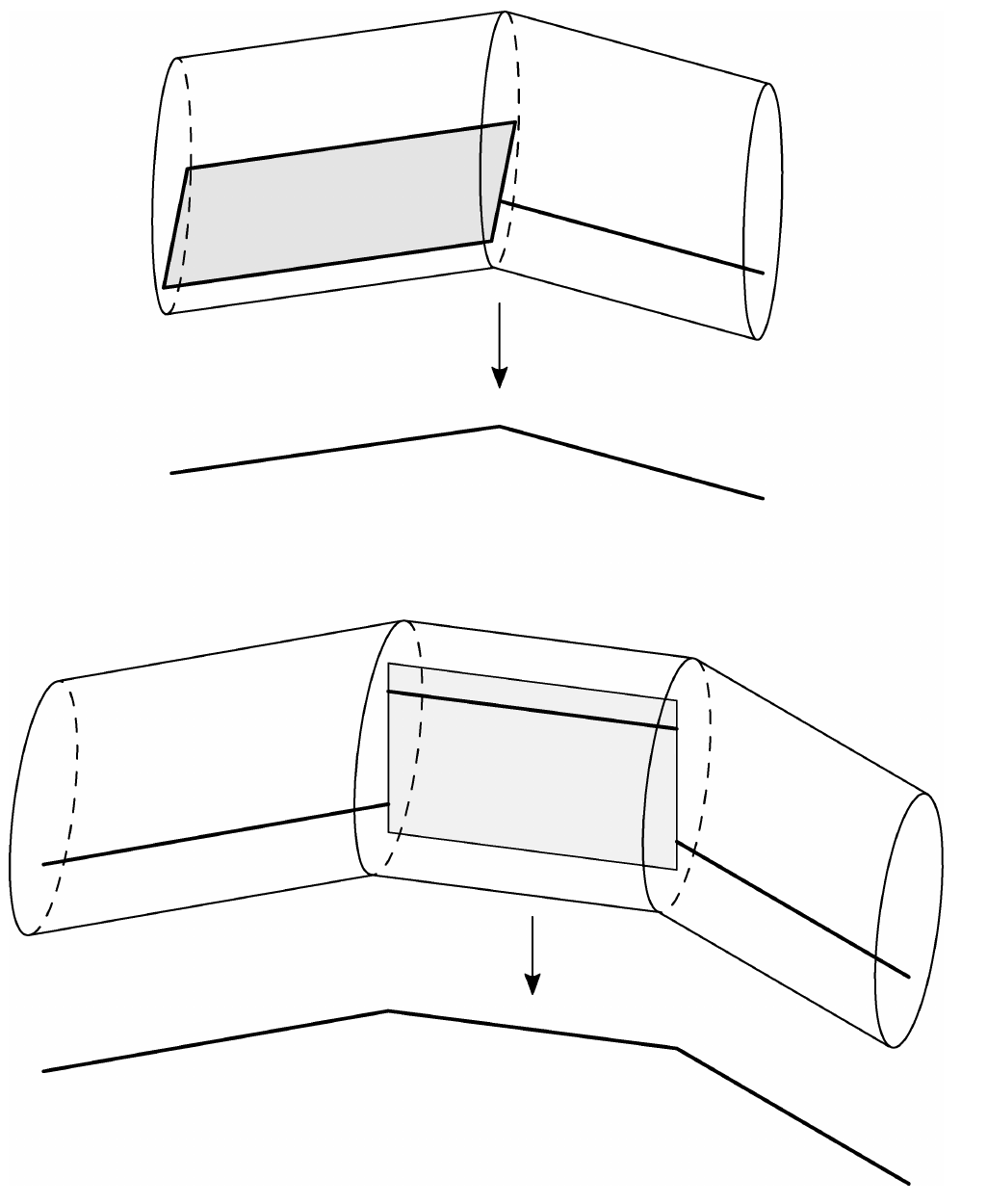
\caption[Some bundles whose directrices fail to meet transversely.]{Some bundles whose directrices fail to meet transversely.

In the first diagram, the codimension 2 directrix $\P W_2$ of $\P V_2$ meets the codimension 1 directrix $\P W_1$ of $\P V_1$
over the node $P_1 \cap P_2$, even though $\P V$ is a $\P^2$-bundle.   

In the second diagram, although adjacent directrices meet transversely, if we transport the flag above node
 $P_1 \cap P_2$ to node $P_2 \cap P_3$ via the minimal subbundle $\P T$, we get non-transverse intersection with
 $\P W_3$. 
}
\label{figure:transverse-directrices}
\end{center}
\end{figure}

See \cref{figure:transverse-directrices} for examples of bundles whose directrices fail to meet transversely. We can now state the theorem which characterizes balanced vector bundles on chains of $\P^{1}$'s.

\begin{theorem}\label{criteria}
Let $V$ be a vector bundle on $\chain{} = P_{1} \cup  \ldots \cup P_{k}$.  Then  $h^{1}(\End V) = 0$ if and only if 
\begin{enumerate}
\item \label{criteria:cond1} Each $V_i$ is balanced, for $i = 1, \ldots, k$, and
\item \label{criteria:cond2} $V$ has transverse directrices. 
\end{enumerate}
\end{theorem}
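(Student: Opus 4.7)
The normalization sequence \eqref{les} already gives $h^1(\End V) \cong \mathrm{coker}(D) \oplus \bigoplus_i H^1(\End V_i)$, where $D$ is the difference map at the nodes, so given \eqref{criteria:cond1} (via \Cref{lemma-balanced-components}) the task reduces to: \emph{$D$ is surjective iff $V$ has transverse directrices}. I would proceed by induction on $k$, strengthened to track the image at one endpoint: writing $Q_j := P_1 \cup \cdots \cup P_j$, the claim is that under the transversality condition on $Q_j$, we have $h^1(Q_j, \End V) = 0$ \emph{and} the restriction map $H^0(Q_j, \End V) \to \End(V|_{\infty_j})$ has image equal to the parabolic $\mathrm{Stab}(L^\bullet_j)$ of endomorphisms stabilizing the left-accumulated flag.

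\textbf{Induction.} The base case $k = 1$ follows from the decomposition
\[
\End V_1 \cong \cO^{a^2+b^2} \oplus \cO(1)^{ab} \oplus \cO(-1)^{ab}
\]
for $V_1 = \cO(m)^a \oplus \cO(m+1)^b$: direct inspection shows the restriction at $\infty_1$ surjects onto $\mathrm{Stab}(G^\bullet(V_1|_\infty))$. For the inductive step, the Mayer--Vietoris sequence associated to $Q_j = Q_{j-1} \cup_{p_{j-1}} P_j$ is
\[
0 \to H^0(Q_j, \End V) \to H^0(Q_{j-1}, \End V) \oplus H^0(\End V_j) \to \End(V|_{p_{j-1}}) \to H^1(Q_j, \End V) \to 0,
\]
where the higher $H^1$'s vanish by induction and balance of $V_j$. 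The inductive hypothesis identifies the image of $H^0(Q_{j-1}, \End V)$ at $p_{j-1}$ as $\mathrm{Stab}(L^\bullet_{j-1})$; together with the image $\mathrm{Stab}(G^\bullet(V_j|_0))$ coming from $V_j$, transversality at $p_{j-1}$ forces their sum to be all of $\End(V|_{p_{j-1}})$ (by the standard fact that two flag stabilizers sum to the full endomorphism algebra iff the flags are transverse), whence $h^1(Q_j, \End V) = 0$. The image at $\infty_j$ is then identified with $\mathrm{Stab}(L^\bullet_j)$: \emph{containment} follows by propagating the preserved subspaces through $P_j$ via the constant diagonal blocks $\alpha_j, \beta_j$ of $M_j$ on $W_j$ and $V_j/W_j$; \emph{surjectivity} exploits the two-point freedom of sections of $\Hom(V_j/W_j, W_j) \cong \cO(1)^{ab}$ to independently prescribe the off-diagonal block at $0_j$ and at $\infty_j$.

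\textbf{Main obstacle.} The delicate direction is the surjectivity in the image identification: given $A \in \mathrm{Stab}(L^\bullet_j)$, one must construct $M_j \in H^0(\End V_j)$ with $M_j|_{\infty_j} = A$ and $M_j|_{0_j} \in \mathrm{Stab}(L^\bullet_{j-1})$, so that $M_j|_{0_j}$ admits an inductive lift over $Q_{j-1}$. The precise definition of $L^\bullet_j$ as the right transport of the modification $L^\bullet_{j-1} \vee G^\bullet(V_j|_0)$ is engineered exactly so that the constant diagonal blocks of $A$ preserve $L^\bullet_{j-1}$ at $0_j$ automatically, leaving only the off-diagonal block at $0_j$ to be freely adjusted. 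Applying this inductive result to $Q_k = \chain{}$, combined with the symmetric statement obtained by growing from the right to identify the accumulated right flag with $R^\bullet_j$, then yields \Cref{criteria}.
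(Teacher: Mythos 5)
The proposal is correct and, for the sufficiency direction, follows the paper's strategy closely: it reduces to surjectivity of the difference map at the nodes, tracks the image of restriction at each node as the parabolic of a left-accumulated flag, and closes the argument with the fact that transverse flags have stabilizers summing to the whole endomorphism algebra (which is the paper's \Cref{lemma-transverse-flags}). Your inductive Mayer--Vietoris phrasing is a clean repackaging of the same computation carried out in \Cref{lemma-leftside} via the normalization sequence of the full chain; in particular the ``two-point freedom of $\cO(1)^{ab}$'' for the off-diagonal block and the rigidity of the constant diagonal blocks are exactly what the paper's block-matrix analysis exploits, with the choice of adapted basis (the paper's \Cref{basis}) being the ingredient your sketch leaves implicit.

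The one genuine organizational difference is the necessity direction. You deduce it from the \emph{equality} $\mathrm{Image}\bigl(H^0(Q_j,\End V)\to\End(V|_{p_j})\bigr)=\mathrm{Stab}(L^\bullet_j)$ together with the converse of \Cref{lemma-transverse-flags} (non-transverse flags have stabilizers that fail to span), whereas the paper never proves the inclusion $\mathrm{Image}\subset\mathrm{Stab}(L^\bullet_j)$ and instead handles necessity separately (\Cref{deform}, via \Cref{lemma-separate}) by exhibiting a first-order deformation that separates the non-transverse flags. Your route is valid---the containment is true, since global endomorphisms of a balanced $V_i$ automatically preserve the directrix and their diagonal blocks are constant, so the transported modification of a preserved flag is again preserved---but it leans on two facts you assert without proof: (1) that containment, which needs the same kind of block analysis used for surjectivity; and (2) the ``only if'' half of the flag-stabilizer-sum statement, which the paper does not need because it detects failure of balance through an explicit deformation rather than through the image. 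Both facts hold, so your argument is complete modulo spelling these out; the paper's split into \Cref{deform} plus \Cref{lemma-leftside} is perhaps less elegant but requires fewer auxiliary claims.

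A small imprecision: from the normalization long exact sequence one gets a short exact sequence $0\to\mathrm{coker}(D)\to H^1(\End V)\to\bigoplus_i H^1(\End V_i)\to 0$, not a canonical direct sum decomposition; this does not affect anything, but as written the claimed isomorphism is only an equality of dimensions.
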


The remainder of this section will be spent proving \cref{criteria}.

\Cref{lemma-balanced-components} already states that if $V$ is balanced, 
then each $V_i$ is balanced as well. The following proposition shows that $V$ also has to have 
transverse directrices.

\begin{proposition}\label{deform}
Let $V$ be a vector bundle on $\chain{}$ such that $V_{i}$ are balanced for all $i$. Furthermore, suppose there is a node $p_{j}$  where $L^{\bullet}_{j}$ and $R^{\bullet}_{j}$ fail to intersect transversely.  Then $V$ is not balanced, i.e. $V$ admits a non trivial first order deformation.
\end{proposition}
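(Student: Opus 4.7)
My plan is to use the long exact sequence \eqref{les}, which identifies $H^1(\chain{}, \End V)$ with the cokernel of the difference map $D$ (the terms $H^1(P_i, \End V_i)$ vanish by the balancedness assumption on each $V_i$). To exhibit a nonzero class in this cokernel, I will produce an obstruction concentrated at the node $p_j$: namely, a tuple $\eta \in \oplus_i \End(V|_{p_i})$ whose component at $p_j$ is some prescribed endomorphism $\xi \in \End(V|_{p_j})$ and whose other components all vanish, with $\xi$ chosen so that $\eta$ lies outside the image of $D$.

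The heart of the argument is an inductive propagation of constraints along the chain. The key claim to establish is: whenever $(M_1, \ldots, M_k) \in \oplus_i H^0(\End V_i)$ satisfies $D(M_1, \ldots, M_k)|_{p_i} = 0$ for every $i < j$, the restriction $M_j|_{\infty_j}$ preserves the filtration $L^\bullet_j$ on $V|_{p_j}$; symmetrically, if $D$ also vanishes at every $p_i$ with $i > j$, then $M_{j+1}|_{0_{j+1}}$ preserves $R^\bullet_j$. Writing $\End_{F^\bullet}(U) \subset \End(U)$ for the subspace of endomorphisms of a vector space $U$ sending each step $F^i$ into itself, this forces any $\xi$ of the prescribed form to lie in $\End_{L^\bullet_j}(V|_{p_j}) + \End_{R^\bullet_j}(V|_{p_j})$.

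With this claim in hand, the problem reduces to a linear-algebra statement: if two filtrations $F^\bullet, G^\bullet$ of a vector space $U$ fail to meet transversely, then $\End_{F^\bullet}(U) + \End_{G^\bullet}(U)$ is a proper subspace of $\End(U)$. This I would verify by choosing a basis adapted to both filtrations (i.e.~adapted to every pairwise intersection of their steps) and observing that the block positions permitted by the two parabolic conditions exhaust $\End(U)$ exactly when every $F^i \cap G^\ell$ has the expected dimension. By hypothesis this fails for $L^\bullet_j$ and $R^\bullet_j$, so some $\xi$ lies outside $\End_{L^\bullet_j} + \End_{R^\bullet_j}$, yielding a nontrivial class in $H^1(\End V)$ and hence a non-trivial first-order deformation of $V$.

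The main obstacle is the inductive propagation itself. The base case $j=1$ is immediate, since sections of $\End V_1$ automatically preserve $W_1$: the bundle $\Hom(W_1, V_1/W_1)$ is a direct sum of negative-degree line bundles on $P_1$, so has no nonzero global sections. For the inductive step, the vanishing $D|_{p_{j-1}} = 0$ identifies $M_j|_{0_j}$ with $M_{j-1}|_{\infty_{j-1}}$, so by induction $M_j|_{0_j}$ preserves both $L^\bullet_{j-1}$ and $G^\bullet(V_j|_{0_j})$, hence also their modification $L^\bullet_{j-1} \wedge G^\bullet(V_j|_{0_j})$. Transferring this preservation from $0_j$ to $\infty_j$ is the delicate step: one exploits that both $\End W_j$ and $\End(V_j/W_j)$ are trivial bundles on $P_j$, so the diagonal blocks of $M_j$ in the decomposition $V_j = W_j \oplus U_j$ act identically at $0_j$ and $\infty_j$ under the canonical identifications of \cref{transport}; the off-diagonal block lives in $\Hom(U_j, W_j)$ and therefore sends every fiber into $W_j|_{\infty_j}$. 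By definition of right transport, each step of $L^\bullet_j$ either contains or is contained in $W_j|_{\infty_j}$, and precisely for these two cases such a subspace is preserved by $M_j|_{\infty_j}$, establishing the propagation and completing the plan.
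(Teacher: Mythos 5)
Your argument is correct, and it takes a genuinely different route from the paper's. The paper produces an explicit first-order deformation of $V$ by perturbing the gluing at $p_j$ to $f_j + \epsilon M_j$ and then invokes the notion of $M_j$ \emph{separating} the filtrations $L^\bullet_j$ and $R^\bullet_j$ (\cref{definition-separation}, \cref{lemma-separate}): the non-flatness of the intersection module is what certifies that the deformation is non-trivial. You instead compute $H^1(\End V)$ as the cokernel of the difference map $D$ and exhibit a class concentrated at $p_j$ lying outside the image. The heart of your argument, the inductive propagation claim that $D_i(M)=0$ for $i < j$ forces $M_j|_{\infty_j}$ to preserve $L^\bullet_j$, is precisely the converse of the paper's \cref{lemma-leftside} (which the paper proves only in the direction needed for sufficiency), and your transport argument---constancy of the $\End W_j$ and $\End(V_j/W_j)$ blocks under \cref{transport}, vanishing of $H^0(\Hom(W_j, V_j/W_j))$, and the observation that every step of $L^\bullet_j$ either contains or is contained in $W_j$---correctly fills in that converse. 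Together with the elementary linear-algebra fact that $\End_{F^\bullet} + \End_{G^\bullet} \subsetneq \End(U)$ when $F^\bullet$ and $G^\bullet$ are not transverse (pick a failing pair $F^i, G^\ell$, a nonzero $v \in F^i \cap G^\ell$, and $w \notin F^i + G^\ell$; no decomposition of a map sending $v$ to $w$ can lie in the sum), this yields the result. Your approach has the virtue of identifying the $p_j$-component of $\im D$ (among tuples vanishing at the other nodes) as \emph{exactly} $\End_{L^\bullet_j} + \End_{R^\bullet_j}$, so that both directions of \cref{criteria} become two sides of one computation; the paper's separation lemma is a quicker route to a deformation but leaves the implication ``separating $\Rightarrow$ non-trivial'' somewhat implicit.
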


\begin{proof}[Proof of \cref{deform}]
We can produce non-trivial deformations of $V$ by modifying the gluing over $p_j$, in a way that the filtrations
$L^{\bullet}_{j}$ and $R^{\bullet}_{j}$ become transverse. While it is clear that there will be non-trivial deformations over a DVR, we need to check that the restriction to $\D=\spec k[\epsilon]/(\epsilon^2)$ is still
non-trivial. 

We get our hands on the first order deformations from 
the boundary map $$\delta \from \oplus_{i}  H^{0}(\End k(p_{i})^{ r}) \to H^1(\End V).$$
Given $M = (M_{1},  \ldots , M_{k-1}) \in  \oplus _{i} H^{0}(\End k(p_{i})^{ r})$, we construct the corresponding first order deformation $\delta(M)$ of $V$ as follows. Take
the (trivially deformed) bundle $V_i \times \D$ on each component $P_i \times \D$, and glue these vector bundles using the map
\[
	f_i+\epsilon M_i \from \left(V_i|_{\infty} \times \D \right) \to \left(V_{i+1}|_0 \times \D \right) 
\]
where $f_i\from V_i|_\infty \to V_{i+1}|_0$ are the original gluing maps for the vector bundle $V$ on $P$.

We claim that a generic choice of $M$ makes the deformation obtained by the gluings above nontrivial. As a matter of fact, we can even take 
$M_i=0$ for $i\neq j$, as long as $M_j$ \emph{separates} the flags $L^{\bullet}_{j}$ and $R^{\bullet}_{j}$ in
the following sense.

\begin{definition}
\label{definition-separation}
Let $V$ be an $r$-dimensional $k$-vector space, and let $V[\epsilon]$ be the module $V \otimes_{k} k[\epsilon]/(\epsilon^2)$.  Suppose $A$ and $B$ are subspaces of $V$ which fail to meet properly. Let $M$ be an endomorphism of $V$ and consider the invertible module map \[f_{M} := Id + \epsilon M \from V[\epsilon] \to V[\epsilon].\] 

We say that $M$ \emph{separates} $A$ and  $B$ if the submodule $A[\epsilon] \cap f_{M}(B[\epsilon])$ is not flat as a $k[\epsilon]/(\epsilon^2)$-module.
\end{definition}

Note that if $M_j$ separates  the flags $L^{\bullet}_{j}$ and $R^{\bullet}_{j}$, then the deformation is not trivial.
We just have to show that a generic choice of $M_j$ does separate these flags. The following lemma assures that this is the case.

\begin{lemma}
\label{lemma-separate}
Let $V$, $A$, and $B$ be as in \cref{definition-separation}. Then there exists $M \in \End V$ which separates $A$ and $B$.  Furthermore, the general choice of $M$ separates $A$ and $B$.
\end{lemma}

\begin{proof}[Proof of \cref{lemma-separate}]
Note that flat and free are the same notion for the local Artinian ring $k[\epsilon]/(\epsilon^{2})$.  Furthermore, a $k[\epsilon]/(\epsilon^{2})$-module $N$ is flat if and only if the multiplication map 
$\times \epsilon \from \left(N/\epsilon N \right) \to N$ is injective. 

We now choose a basis $\{v_{1}, \ldots , v_{r}\}$ for $V$ such that $A = \langle v_{1}, \ldots ,v_{a}\rangle$ and $B = \langle v_{m+1}, \ldots , v_{m+b}\rangle$, and assume that $m+b < r$ and $m+1 \leq a$, so that $A$ and $B$ do not meet properly.  Now consider the endomorphism $M \in \End V$ which is $0$ on all $v_{i}$ except sends $v_{m+1}$ to $v_{r}$.   Then $f_{M}(B[\epsilon]) = \langle v_{m+1} + \epsilon v_{r}, v_{m+2}, \ldots ,v_{m+b}\rangle \subset V[\epsilon]$.  The element $\epsilon v_{m+1}$ is annihilated by $\epsilon$, but is non-zero in the quotient module $A[\epsilon] \cap f_{M}(B[\epsilon])/\epsilon(A[\epsilon] \cap f_{M}(B[\epsilon]))$. Hence, $A[\epsilon] \cap f_{M}(B[\epsilon])$ is not flat.
\end{proof}
\Cref{lemma-separate} now implies \cref{deform}.
\end{proof}

\Cref{deform} tells us conditions (\ref{criteria:cond1}) and (\ref{criteria:cond2}) are necessary in \cref{criteria}. Now we must prove sufficiency.
Specifically, we must show that if $V$ has transverse directrices, then the difference map
\[
D\from \oplus_{i}H^{0}(P_{i}, \End V_{i}) \to \oplus_{i} \End (k(p_{i})^{r})
\]
is surjective. For $M \in \oplus_{i}H^{0}(P_{i}, \End V_{i})$, denote by $D_i(M)$ the $i$-th component of $D(M) \in \oplus_{i} \End (k(p_{i})^{r})$.
It is enough to show that for each $j$, we can choose $M$ such that $D_i(M)=0$ for all 
$i \neq j$, while  $D_j(M) \in \End (k(p_j)^r)$ is arbitrary.

We will first choose $M_i$ for $i< j$ such that $D_i(M)=0$ for all $i<j$, and see what constraints we have on 
$M_{j-1}|_{\infty}$. We will then do the same from the other side, starting at $M_{k-1}$ and going down to $M_j$, and
then conclude that the difference $D_j(M)=M_{j-1}|_\infty- M_j|_0$ can be made to be arbitrary.

To express the constraints on $M_{j-1}|_{\infty}$, let us introduce the following notation:
\begin{definition}
We say that an endomorphism $M \in \End(V)$ of a vector space $V$ \emph{respects the flag} 
$0 \subset F_1 \subset F_2 \subset \ldots \subset F_r \subset V$ if $M$ preserves the $F_i$ as subspaces of $V$.
\end{definition}

We now can state the following.
\begin{lemma}
\label{lemma-leftside}
We can choose $M_i \in H^{0}(P_{i}, \End V_{i})$ for $i< j$ such that $D_i(M)=0$ for $i<j$,
and $M_{j-1}|_{\infty}\in \End (k(p_j)^r)$ is an arbitrary endomorphism that preserves the left flag 
$L^\bullet_j$.
\end{lemma}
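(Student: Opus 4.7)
My plan is to induct on $j$. For the base case $j = 2$ only $M_1 \in H^0(\End V_1)$ must be chosen, and the claim reduces to the surjectivity of the restriction $H^0(\End V_1) \to \End(V_1|_\infty)$ onto the subspace of endomorphisms preserving $W_1|_\infty$ (which is the left flag $L^\bullet_2$ transported to $\infty$). Since $V_1$ is balanced with directrix $W_1$, the summand $\Hom(W_1, V_1/W_1) \subset \End V_1$ has negative degree and contributes no global sections, forcing every section to preserve $W_1$ pointwise. The remaining summands $\End(V_1/W_1)$, $\End(W_1)$, $\Hom(V_1/W_1, W_1)$ are $\cO^{\oplus\cdot}$, $\cO^{\oplus\cdot}$, $\cO(1)^{\oplus\cdot}$ respectively; the first two yield sections that are constant under canonical trivial-bundle identifications, while the $\cO(1)$ summand is surjective under single-point evaluation, and a direct dimension count confirms the desired surjectivity.

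For the inductive step, I would take a target $E \in \End(V_{j-1}|_\infty)$ preserving the left flag $L^\bullet_j$. The recursive definition of $L^\bullet_j$, via modification by $G^\bullet(V_{j-1}|_0)$ and right-transport across $V_{j-1}$, guarantees that $W_{j-1}|_\infty$ is one of its layers, so $E$ preserves $W_{j-1}|_\infty$ and takes the block form $E = \bigl(\begin{smallmatrix} A & 0 \\ C_\infty & D \end{smallmatrix}\bigr)$ relative to a splitting $V_{j-1}|_\infty \cong (V_{j-1}/W_{j-1})|_\infty \oplus W_{j-1}|_\infty$. The decomposition of $\End V_{j-1}$ above then implies that any global section $M_{j-1}$ restricts at $0$ and $\infty$ to block lower-triangular matrices with identical $A$ and $D$ blocks (via the canonical trivial-bundle identifications) and independently chosen $C$-blocks (from the $\cO(1)$ off-diagonal summand with surjective two-point evaluation).

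The key technical step is to choose $C_0$ so that $E_0 := \bigl(\begin{smallmatrix} A & 0 \\ C_0 & D \end{smallmatrix}\bigr) \in \End(V_{j-1}|_0)$ preserves the previous left flag $L^\bullet_{j-1}$; the inductive hypothesis then supplies compatible $M_1, \dots, M_{j-2}$ with $M_{j-2}|_\infty = E_0$, and a global section $M_{j-1}$ realizing $(E_0, E)$ at $(0, \infty)$ exists by the two-point evaluation surjectivity. The recursive construction of the left flag ensures that $A$ and $D$ automatically preserve the induced flags of $L^\bullet_{j-1}$ on $(V_{j-1}/W_{j-1})|_0$ and on $W_{j-1}|_0$ respectively. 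The main obstacle is handling the layers $F^i$ of $L^\bullet_{j-1}$ that neither contain nor lie inside $W_{j-1}|_0$: for each such $F^i$, the condition $E_0(F^i) \subset F^i$ reduces to a linear constraint on $C_0$ restricted to the image of $F^i$ in $(V_{j-1}/W_{j-1})|_0$, modulo $F^i \cap W_{j-1}|_0$, and these constraints nest compatibly along the filtration, so $C_0$ can be built one layer at a time.
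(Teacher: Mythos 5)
Your overall strategy (induction on the chain length, block decomposition of $\End(V_{j-1})$ into $\End(V/W)\oplus\End(W)\oplus\Hom(W,V/W)\oplus\Hom(V/W,W)$ with degrees $0,0,-1,+1$) matches the paper's, and your base case is essentially the same argument. The inductive step, however, takes a genuinely different route. The paper first establishes Claim~3.18: using the transversality of $L^\bullet_{j-1}$ with the directrix flag $(0\subset W_{j-1}|_0\subset V_{j-1}|_0)$, it produces a single ordered basis of $V_{j-1}|_0$ that simultaneously generates $L^\bullet_{j-1}$ and has $W_{j-1}|_0$ as a terminal segment. Relative to the induced splitting, each $L^i_{j-1}$ decomposes as $(L^i_{j-1}\cap U)\oplus(L^i_{j-1}\cap W)$, so the paper can simply take $C_0:=M(V/W,W)|_0=0$ and conclude immediately that $M|_0$ preserves $L^\bullet_{j-1}$, because the constant blocks $A$ and $D$ are already forced to be ``block upper triangular'' for $L^\bullet_{j-1}$ by the requirement that $\overline{M}$ preserve $L^\bullet_j$. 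You instead work in an arbitrary splitting (where the $L^i_{j-1}$ need not decompose) and solve for a possibly nonzero $C_0$. That is a legitimate alternative, and it has the aesthetic advantage of not invoking the transversality hypothesis at this step (transversality still enters Theorem~3.11 via Lemma~3.17, of course). The cost is that your key assertion---that the constraints $C_0|_{\pi(F^i)}\equiv\bar\phi_i \pmod{F^i\cap W}$ are mutually compatible and hence solvable layer by layer---is stated without justification. That compatibility is true, but it is not formal: one needs (i) that $\bar\phi_i$ is well defined as a function of $u=\pi(v)$ alone, which uses precisely the fact that $D$ preserves $F^i\cap W$, and (ii) that for $i<i'$ the image of $\bar\phi_i(u)$ in $W/(F^{i'}\cap W)$ equals $\bar\phi_{i'}(u)$, which follows because a lift in $F^i$ also serves as a lift in $F^{i'}$. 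These are exactly the steps one would need to write out, and they are where your ``nest compatibly'' hides the work that the paper's Claim~3.18 makes unnecessary. With those two verifications supplied, your argument is complete and correct.
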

Of course the analogous result is true if we started from the right: we can choose $M_i \in H^{0}(P_{i}, \End V_{i})$ for $i \geq j$ such that $D_i(M)=0$ for $i>j$ and $M_j|_0 \in  H^{0}(P_{i}, \End V_{i})$ is an arbitrary
endomorphism preserving the \emph{right} flag $R^\bullet_j$. Now we only have to assure that we can arrange the difference  $M_{j-1}|_{\infty}-M_j|_0$ to be arbitrary. The following lemma does that. 

\begin{lemma}
\label{lemma-transverse-flags}
If $F^\bullet$ and $G^\bullet$ are transverse flags of a vector space $V$, then any endomorphism of $V$ can be written as a difference of an endomorphism respecting the flag $F^\bullet$, and one respecting $G^\bullet$. 
\end{lemma}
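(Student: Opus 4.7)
My plan is to reduce the statement to the familiar decomposition of a square matrix into its upper and strictly lower triangular parts with respect to a basis simultaneously adapted to both flags.

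The first step is to enlarge $F^\bullet$ and $G^\bullet$ to \emph{complete} transverse flags. The transversality hypothesis $\dim(F^i \cap G^j) = \max(0, \dim F^i + \dim G^j - \dim V)$ lets one produce, by an induction on $\dim V$, a basis $e_1, \ldots, e_n$ of $V$ in which $F^i = \langle e_1, \ldots, e_{\dim F^i}\rangle$ for every $i$ and $G^j = \langle e_{n - \dim G^j + 1}, \ldots, e_n\rangle$ for every $j$ (at each stage, pick a line in the smallest nonzero $F^{i}$ transverse to the appropriate piece of $G^\bullet$, split it off, and iterate on the quotient). Setting $\tilde F^i := \langle e_1, \ldots, e_i\rangle$ and $\tilde G^j := \langle e_{n-j+1}, \ldots, e_n\rangle$ then gives complete flags refining $F^\bullet$ and $G^\bullet$, and these refinements are still transverse. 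Since any endomorphism respecting the refinement automatically respects the coarser flag, it is enough to prove the lemma for $\tilde F^\bullet$ and $\tilde G^\bullet$.

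The second step is to work in the adapted basis: endomorphisms respecting $\tilde F^\bullet$ are precisely the upper triangular matrices, and endomorphisms respecting $\tilde G^\bullet$ are precisely the lower triangular ones. Given any $M \in \End(V)$, decompose $M = U + S$ where $U$ consists of the diagonal and strictly upper triangular entries of $M$ and $S$ consists of the strictly lower triangular entries. Then $M = U - (-S)$, where $U$ respects $\tilde F^\bullet$ and $-S$, being strictly lower triangular, respects $\tilde G^\bullet$. This is the desired decomposition.

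The entire argument is essentially linear algebra, so no step should present a real obstacle. The only nontrivial input is the existence of a common adapted basis, and this is where the transversality hypothesis enters in an essential way; once such a basis is in hand, the decomposition is immediate.
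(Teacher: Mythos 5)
Your proof is correct and takes essentially the same approach as the paper: refine both flags to complete transverse flags, find a single basis adapted to both, and then observe that the statement reduces to decomposing a matrix into upper- and lower-triangular parts. The paper constructs the adapted basis by directly taking $\langle v_i \rangle = F^i \cap G^{n-i+1}$ for the completed flags (the exponent in the paper reads $n-i-1$, which appears to be an indexing typo), whereas you build it by an induction that peels off one line at a time; both are standard and the argument is otherwise identical.
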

\begin{proof}[Proof of \cref{lemma-transverse-flags}]
Without loss of generality, we may assume that $F^\bullet$ and $G^\bullet$ are complete flags. Indeed, if they are not, just choose generic finer complete flags containing them, and the problem only becomes more restrictive.

Now pick a basis $v_1,\ldots, v_n$ of $V$ such that $\langle v_i \rangle = F^i \cap G^{n-i-1}$. In this basis, 
endomorphisms preserving $F^\bullet$ are upper triangular matrices, and the ones respecting $G^\bullet$ are the lower
triangular ones. And any matrix can be written as a difference of two such matrices.
\end{proof}

To complete the proof of \cref{criteria}, we only need to prove \cref{lemma-leftside}.

\begin{proof}[Proof of \cref{lemma-leftside}]
We proceed by induction on the length of the chain $\chain{}$. If there is only one component, the claim states that for
any endomorphism $\overline{M}:V_1|_\infty \to V_1|_\infty$ which sends the restriction of the directrix $W|_{\infty}$  to itself,
we can find $M \in H^0(\End(V_1))$ such that $M|_\infty = \overline{M}$.

Fix a splitting $V_1 = \sh{O}(m)^a \oplus \sh{O}(m+1)^b$. Then $\End(V_1)= \sh{O}^{a^2} \oplus \sh{O}(1)^{ab} \oplus
\sh{O}(-1)^{ab} \oplus \sh{O}^{b^2}$, and we can realize this splitting more naturally by realizing an element of $\End V_1$ as an $(a+b) \times (a+b)$ matrix block matrix. For
example, for $a=2, b=3$, we have
\[
\End V_1 =
\left[
\begin{array}{cc|ccc}
\sh{O} & \sh{O} & \sh{O}(1) & \sh{O}(1) & \sh{O}(1) \\
\sh{O} & \sh{O} & \sh{O}(1) & \sh{O}(1) & \sh{O}(1) \\ \hline
\sh{O}(-1) & \sh{O}(-1) & \sh{O} & \sh{O} & \sh{O}  \\
\sh{O}(-1) & \sh{O}(-1) & \sh{O} & \sh{O} & \sh{O}  \\
\sh{O}(-1) & \sh{O}(-1) & \sh{O} & \sh{O} & \sh{O} 
\end{array}	
\right]
\]
The global sections of $\End(V_1)$ restrict to an arbitrary block upper triangular matrix at $\infty$. And being 
block upper triangular exactly means that the directrix is preserved. This proves the case for one component.

For the induction step, we are given an endomorphism $\overline{M}$ of $V_{j-1}|_{\infty}$ preserving $L^\bullet_j$,
and we want to find $M \in H^0(\End(V_{j-1})$ such that
\begin{itemize}
	\item $M|_{\infty}=\overline{M}$, and
	\item $M|_0$ is an endomorphism of $V_{j-1}|_0$ respecting $L^\bullet_{j-1}$.
\end{itemize}

This is enough for our purposes, because by induction we can choose global sections of $\End(V_i)$ for $i<j-1$ such that
all differences $D_i$ are zero, for $i<j-1$, and produce $M|_0$ over the $(j-1)$-th node.

We will choose an appropriate splitting of $V_{j-1}$ so that endomorphisms preserving the flags $L^\bullet_{j-1}$ 
and $L^\bullet_j$ become block matrices.

\begin{definition}
An ordered basis $v_1,\ldots, v_n$ \emph{generates the flag} $F^\bullet$ if each $F^i$ is the span of 
$v_1,v_2,\ldots, v_{n_i}$ for some $n_i$. 
\end{definition}
\begin{claim}\label{basis}
We may choose an ordered basis $v_1,\ldots, v_n$ of $V_{j-1}|_0$ generating $L^\bullet_{j-1}$ and 
such that $W_{j-1}|_{0}=\langle v_k,v_{k+1}, \ldots, v_n \rangle$.
\end{claim}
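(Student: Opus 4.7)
The plan is to construct the basis in two stages: first pick an appropriate complement $W' \subset V_{j-1}|_0$ of $W_{j-1}|_0$, then pick bases of $W'$ and of $W_{j-1}|_0$ separately that refine $L^\bullet_{j-1}$ restricted to each side. The key structural requirement on $W'$ is that every flag element $F^i$ of $L^\bullet_{j-1}$ either sits inside $W'$ or contains $W'$; granted such a $W'$, an ordered basis $v_1,\ldots,v_{k-1}$ of $W'$ generating the sub-flag $\{F^i : F^i \subset W'\}$, followed by an ordered basis $v_k,\ldots,v_n$ of $W_{j-1}|_0$ generating the flag $\{F^i \cap W_{j-1}|_0 : F^i \supset W'\}$, will have the claimed properties: every $F^i$ becomes an initial span of the concatenation, while $W_{j-1}|_0 = \langle v_k,\ldots,v_n\rangle$ by construction.

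The existence of such a compatible $W'$ is equivalent to the dichotomy that each $F^i$ satisfies either $F^i \cap W_{j-1}|_0 = 0$ or $F^i + W_{j-1}|_0 = V_{j-1}|_0$. I would extract this dichotomy from the transverse directrices hypothesis (the standing assumption under which we are proving sufficiency in \cref{criteria}). The observation is that $W_{j-1}|_0$ is itself a term of the right flag at the node naturally identified with $V_{j-1}|_0$: it appears once the right flag is modified by $G^\bullet(V_{j-1})$. Transverse intersection of the left and right flags at that node therefore forces the dimension identity $\dim(F^i \cap W_{j-1}|_0) = \max\{0, \dim F^i - a\}$, where $a := \dim V_{j-1}|_0 - \dim W_{j-1}|_0$, and this identity is precisely the desired dichotomy.

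With the dichotomy in hand, producing $W'$ is a brief exercise: let $F^{i^\ast}$ be the largest flag element disjoint from $W_{j-1}|_0$, and $F^{i_\ast}$ the smallest one surjecting onto $V_{j-1}|_0/W_{j-1}|_0$. One has $F^{i^\ast} \subset F^{i_\ast}$ and $F^{i^\ast} \cap (F^{i_\ast} \cap W_{j-1}|_0) = 0$, so $F^{i^\ast}$ extends inside $F^{i_\ast}$ to a subspace $W'$ of dimension $a$ meeting $F^{i_\ast} \cap W_{j-1}|_0$ trivially. This $W'$ is then a complement of $W_{j-1}|_0$ in $V_{j-1}|_0$ with the required containment properties for every $F^i$. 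The main obstacle is the middle step: correctly identifying that the transverse directrices hypothesis at the appropriate node yields transversality of the whole flag $L^\bullet_{j-1}$ with $W_{j-1}|_0$; once that bridge is in place, the surrounding linear algebra is routine.
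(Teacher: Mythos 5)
Your proof is correct, but it takes a genuinely different route from the paper's. The paper reuses the same trick as \cref{lemma-transverse-flags}: it completes both $L^{\bullet}_{j-1}$ and $G^{\bullet}=(0\subset W_{j-1}|_{0}\subset V_{j-1}|_0)$ to generic complete flags $\widehat L^\bullet$, $\widehat G^\bullet$, notes that genericity preserves transversality, and reads off the basis vectors from the one-dimensional intersections $\widehat L^i\cap\widehat G^{n-i+1}$ (the exponent in the paper has a typo). You instead build an explicit complement $W'$ of $W_{j-1}|_0$ compatible with every flag element, reducing the problem to the dichotomy that each $F^i$ is either disjoint from $W_{j-1}|_0$ or sums with it to the whole space, and then concatenate bases of $W'$ and $W_{j-1}|_0$. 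Both approaches rest on the same logical input---transversality of $L^{\bullet}_{j-1}$ with the two-step directrix flag $G^\bullet$---which the paper invokes with the phrase ``by assumption'' and you correctly trace to the transverse directrices hypothesis by noting that $W_{j-1}|_0$ enters the right flag at the node $0_{j-1}$ via the modification step in its inductive definition. The paper's argument is shorter because the generic-completion machine is already set up; yours is more explicit and makes visible the structure (a flag transverse to a two-step flag decomposes along a compatible complement), at the cost of a somewhat longer chain of elementary steps.
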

\begin{proof}[Proof of \cref{basis}]
We use the same idea as in the proof of \cref{lemma-transverse-flags}. Complete $L^\bullet_{j-1}$
and $G^\bullet=(0 \subset W_{j-1}|_{0} \subset V_{j-1}|_0)$ to generic complete flags $\widehat{L}^\bullet_{j-1}$ and $\widehat{G}^\bullet$.
Since $G^\bullet$ is transverse to $L^\bullet_{j-1}$ by assumption, so is
$\widehat{L}^\bullet_{j-1}$ and $\widehat{G}^\bullet$.
Choose $v_i$ such that $\langle v_i \rangle = \widehat{L}^i_{j-1} \cap \widehat{G}^{n-i-1}$.
\end{proof}

Note that the modification $L^\bullet_{j-1} \wedge (0 \subset W_{j-1}|_{0} \subset V_{j-1}|_0)$ is generated by the
ordered basis $(v_k,v_{k+1},\ldots, v_n, v_1, v_2, \ldots, v_{k-1})$.

Now pick a splitting of $V_{j-1}$ that restricts to the basis $v_1,\ldots, v_n$ over zero. Let us write the matrix $M \in H^0(\End(V_{j-1}))$ in this basis.
For convenience, we will use block notation, separating the vectors generating $W$ (that is, $v_k,\ldots, v_n$), from the remaining $v_1,\ldots, v_{k-1}$.

\[
M = 
\left[
\begin{array}{c|c}
M(V/W,V/W) & M(W,V/W) \\ \hline
M(V/W,W) & M(W,W)
\end{array}	
\right]
\]
We want $M|_0$ to respect the flag $L^\bullet_{j-1}$, and $M|_\infty$ to be the given matrix $\overline{M}$ respecting the $L_j$ flag. In this block notation this
translates into the following conditions:
\begin{enumerate}
	\item The matrix $M(V/W,V/W)$ is upper block triangular when restricted to both zero and infinity (for the same block shapes). The entries of
	$M(V/W,V/W)$ are valued in $\sh{O}_{\P^1}$, so we must take the constant matrix specified by $\overline{M}$.
	\item The same applies for the block $M(W,W)$: we want it to be upper block triangular at zero, with the restriction at infinity specified by $\overline{M}$.  Since the matrix $M(W,W)$ is valued in $\sh{O}_{\P^1}$, we must use a constant matrix.
	\item The entries of $M(W,V/W)$ are valued in $\sh{O}_{\P^1}(-1)$, so it has to be the zero matrix. This poses no problems, because at infinity the
	corresponding block of $\overline{M}$ is automatically zero to begin with.
	\item Finally, the block $M(V/W,W)$ must restrict to a specified matrix  over infinity (given by $\overline{M}$), but has to vanish over zero. This can be
	achieved, because all the entries are valued in $\sh{O}_{\P^1}(1)$, so we may take linear functions interpolating between arbitrary values 
	at zero and infinity.
\end{enumerate}

This completes the proof \cref{lemma-leftside}, and hence by induction the proof of \cref{criteria}.
\end{proof}





\section{Balanced \texorpdfstring{$F$}{F}-bundles and a maximal rank problem}
\label{sec:reduction}
The goal of this section is to translate the Main Theorem into a degree $2$ maximal rank problem for ``maximally connected chains'' in $\P^{r}$. This
will in fact allow us to establish the following slightly stronger theorem.
\begin{theorem}
\label{theorem:F-bundle}
The general cover $[\alpha\from C \to \P^1] \in \mathcal{H}_{d,g}$ has a balanced bundle of quadrics $F$ whenever $g$ can be written as $(a-1)(d-1)+bd$, for integers $a,b \geq 0$.
\end{theorem}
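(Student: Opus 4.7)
The plan is to degenerate a general cover in $\H_{d,g}$ to an admissible cover $\alpha \from X \to P$, where $P$ is a chain of $\P^1$'s, then verify the two criteria of \cref{criteria} for the bundle of quadrics of $\alpha$, and finally invoke upper semicontinuity of $h^1(\End F)$ on the Hurwitz stack to transfer balancedness from the special fiber to a general smooth cover.

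First I would build $P = P_1 \cup \cdots \cup P_{a+b}$ and an admissible cover $\alpha$ by taking $\alpha|_{P_i}$ to be a general degree $d$ genus-$0$ cover for $1 \leq i \leq a$ and a general degree $d$ genus-$1$ cover for $a < i \leq a+b$, glued transversally at the nodes (so that each node of $P$ is the image of exactly $d$ nodes of $X$, one for each sheet). A routine arithmetic genus count yields $p_a(X) = (a+b-1)(d-1) + b = (a-1)(d-1) + bd$, matching the hypothesis. The bundle of quadrics $F$ of $\alpha$ is then a vector bundle on $P$ whose restrictions $F_i$ are provided by \cref{lowG}: on a rational component, $F_i = \cO(1)^{\oplus d-3} \oplus \cO(2)^{\oplus \binom{d-2}{2}}$ with directrix $W_i$ of rank $\binom{d-2}{2}$; on an elliptic component, $F_i = \cO(2)^{\oplus d(d-3)/2}$, and the directrix is all of $F_i$.

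Criterion~(\ref{criteria:cond1}) of \cref{criteria} is then immediate. For criterion~(\ref{criteria:cond2}) I would identify, at each node $p_j \in P$, the fiber $F|_{p_j}$ with the space of quadrics in $\P \cE|_{p_j} \cong \P^{d-2}$ through the length-$d$ Gorenstein scheme $\alpha^{-1}(p_j)$. The directrix subspace $W_i|_{p_j}$ inside $F_i|_{p_j}$ then cuts out those quadrics on the fiber that extend to quadrics containing the entire normal curve slice of the corresponding component: on a rational piece these are the $\binom{d-2}{2}$ quadrics through the rational normal curve image of $C|_{P_i}$, while on an elliptic piece every quadric through $d$ generic points of an elliptic normal curve extends (as in the proof of \cref{lem:ellipticnormal}), which is exactly the reason that the directrix fills out the whole bundle there. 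With this dictionary, the left and right filtrations $L^\bullet_j$, $R^\bullet_j$ built by the iterated transport-and-modification procedure of \cref{subsub-the-two-natural-filtrations} become the flags of quadrics in $\P^{d-2}$ containing the accumulated unions of normal curves sitting to the left, respectively to the right, of the node $p_j$.

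Transversality of $L^\bullet_j$ with $R^\bullet_j$ at every node is then equivalent to saying that the global curve $Y \subset \P^{d-2}$ assembled from all the rational and elliptic normal curves along the chain -- glued together as a ``maximally connected chain'' -- has its ideal of quadrics of the expected dimension at every splitting between left and right pieces, i.e., a degree $2$ maximal rank statement for $Y$ and its subchains. The hard part is precisely this reduction: matching the bookkeeping of modifications performed at successive nodes (as in \cref{refine}) with the projective geometry of gluing rational and elliptic normal curves in a single $\P^{d-2}$, so that a \emph{single} maximal rank statement for $Y$ implies transversality at all $a+b-1$ nodes simultaneously. Once the reduction is in place, the maximal rank problem itself is dispatched in \cref{sec:the_maximal_rank_problem_for_maximally_connected_chains}, completing the proof.
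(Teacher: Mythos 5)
Your high-level strategy---degenerate to an admissible cover over a rational chain, apply \cref{criteria}, and translate ``transverse directrices'' into a projective-geometry statement about quadrics through unions of normal curves---matches the paper's. But the ``global curve $Y \subset \P^{d-2}$ assembled from all the rational and elliptic normal curves'' does not exist: on elliptic components $E_\beta = \cO(1)^{\oplus d-2}\oplus\cO(2)$ is not trivial up to a twist, so $\P E_\beta$ is not a product $\P^{d-2}\times\P^1$, and the elliptic normal curve sits in $\P^{d-1}$, not $\P^{d-2}$. The clean fix is your own observation in disguise: since $F_\beta = \cO(2)^{\oplus d(d-3)/2}$ is perfectly balanced, its directrix is everything, so gluing an elliptic piece onto a balanced $F$-bundle cannot destroy balancedness; this is what \cref{lemma:reduction-to-rational} uses to strip off the elliptic components and reduce to $b = 0$, after which the all-rational chain does sit inside $\P^{d-2}\times\chain{a}$ and projects to a maximally connected chain of rational normal curves.

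The more serious gap is that your claimed equivalence ``transversality at all nodes $\iff$ a degree-$2$ maximal rank statement for $Y$ and its subchains'' holds only when $d$ is even. When $d$ is odd, $\rk F = d(d-3)/2$ is not an integer multiple of the directrix codimension $d-3$, so after $(d-1)/2$ rounds of modification-and-transport the flag acquires a residual element whose geometric description is not ``quadrics containing a subchain'' but ``quadrics containing a subchain whose residual intersection with the next link matches that of a quadric from the opposite side'' (\cref{lemma:flag-description-odd}). Genericity of that flag element is the \emph{transverse residues} condition of \cref{definition:transverse-residues}, an additional constraint beyond quadric-genericity that requires its own incidence-variety dimension count (\cref{sub:transverse_residues}). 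Your sketch omits it entirely, so for odd $d$ the argument as written does not establish transverse directrices.
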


The Main Theorem follows from \cref{theorem:F-bundle} and the following lemma.
\begin{lemma}
Any integer $g\geq (d-3)(d-1)$ can be written as $(a-1)(d-1)+bd$ for integers $a,b\geq 0$.
\end{lemma}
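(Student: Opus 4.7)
The plan is to treat this as a purely number-theoretic statement: setting $x := a-1$, we must show every $g \geq (d-3)(d-1)$ can be written as $g = x(d-1) + bd$ with $x \geq -1$ and $b \geq 0$. Since $\gcd(d-1,d) = 1$, the residue of $x(d-1)$ mod $d$ determines $x$ modulo $d$; specifically, $x(d-1) \equiv g \pmod d$ forces $x \equiv -g \pmod d$. So the only real question is whether a valid $x$ in the range $\{-1, 0, 1, \ldots\}$ makes $b = (g - x(d-1))/d$ non-negative.

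The approach is a short case analysis on $r := g \bmod d$. For $r = 1$, one can take the exceptional choice $x = -1$ (i.e., $a = 0$), which gives $b = (g + d - 1)/d \geq 0$ trivially. For $r \neq 1$, choose $x_0 \in \{0, 1, \ldots, d-2\}$ with $x_0 \equiv -g \pmod d$; the condition $r \neq 1$ excludes $x_0 = d-1$. When $r \in \{0, 3, 4, \ldots, d-1\}$ one has $x_0 \in \{0, 1, \ldots, d-3\}$, and then $x_0(d-1) \leq (d-3)(d-1) \leq g$, so $b \geq 0$.

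The only tight subcase is $r = 2$, where $x_0 = d-2$ and the required bound is $g \geq (d-2)(d-1)$, which is strictly larger than $(d-3)(d-1)$. The rescue is to notice that the smallest integer $g \equiv 2 \pmod d$ satisfying $g \geq (d-3)(d-1) = d^2 - 4d + 3$ is in fact $g = (d-2)(d-1) = d^2 - 3d + 2$, since $(d-3)(d-1) \equiv 3 \pmod d$ and the next residue-$2$ value above it is obtained by adding $d-1$. Hence the tight bound is exactly met, with equality giving $b = 0$.

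I expect no real obstacle here; the only subtlety is noticing the exceptional case $r = 1$ where one must exploit $a = 0$ rather than $a \geq 1$, and verifying the tight inequality at $r = 2$. Assembling these cases yields $a = x+1 \geq 0$ and $b \geq 0$ in every situation, completing the lemma.
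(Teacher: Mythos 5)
Your proof is correct, but it takes a different route from the paper's. You divide $g$ by $d$ and run a case analysis on the residue $r = g \bmod d$, treating $r = 1$ (where you must allow $a = 0$) and $r = 2$ (the tight case where $g \geq (d-2)(d-1)$ is needed and happens to hold) as exceptional. The paper instead divides $g$ by $d-1$, writing $g = q(d-1) + r$ with $0 \leq r \leq d-2$; the hypothesis $g \geq (d-3)(d-1)$ then forces $q \geq d-3$, and setting $b = r$, $a = q - r + 1$ gives the decomposition in one step since $(q-r)(d-1) + rd = q(d-1) + r$. The paper's choice of modulus turns the problem into a single algebraic identity with no cases; your choice makes the residue $x \pmod d$ uniquely determined and then requires you to verify non-negativity case by case, which is more work but arguably makes the arithmetic structure (uniqueness of $x_0$, the role of the lower bound, the tightness at $r = 2$) more visible.
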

\begin{proof}
We may write $g=q(d-1)+r$ for 
\[
	q \geq d-3 \text{ and } d-2\geq r \geq 0
\]

Setting $b=r\geq 0$ and $a=q-r+1\geq 0$, we have
\[
	(a-1)(d-1)+bd=(q-r)(d-1)+rd=q(d-1)+r=g
\]
as we wanted.
\end{proof}

We prove \cref{theorem:F-bundle} by first observing that being balanced is an open condition. Hence, it is enough to exhibit a single (admissible) cover with balanced $F$-bundle.  We will use \cref{criteria}
to translate this condition to a version of the maximal rank problem, specifically \cref{theorem:maximal-rank}.

The specific admissible cover we will consider arises from the following construction. Given:
\begin{itemize}
 	\item  degree $d$, genus $g_i$ simply branched covers 
 	$[\alpha_i\from C_i \to \P^1] \in \mathcal{H}_{d,g_i}$ unramified over
 	$0$ and $\infty$, for $i=1,\ldots, n$,  and
 	\item bijections $\phi_i:\alpha_i^{-1}(\infty) \to \alpha_{i+1}^{-1}(0)$, for $i=1,\ldots,n-1$ 
 \end{itemize}
 we may construct the nodal curve $X$, with irreducible components $C_1,\ldots, C_n$, obtained by identifying each $p \in \alpha_i^{-1}(0) \subset C_i$ with $ \phi(p) \in \alpha_{i+1}^{-1}(\infty) \subset C_{i+1}$ via $\phi_{i}$, for all $i$. (See \cref{figure:broken-cover}.) The curve $X$ has $d(n-1)$ nodes in total, and admits a map 
\[
\alpha \from X \to \chain{n}
\]
 to a chain of $n$ $\P^1$'s, which we denote by $\chain{n}$. The map $\alpha$ is an admissible cover in the sense of \cite{harris_kodaira_1982}. 
 
 Denote by
 $\Sigma(d;g_1,\ldots, g_n)$ the parameter space of admissible covers obtained by this procedure. Note that 
 \begin{equation}\label{eq:genus-of-admissible-cover}
 	g =p_a(X) = 1+ \sum_i (g_i -1) + d(n-1).
 \end{equation}

\begin{figure}
\begin{center}
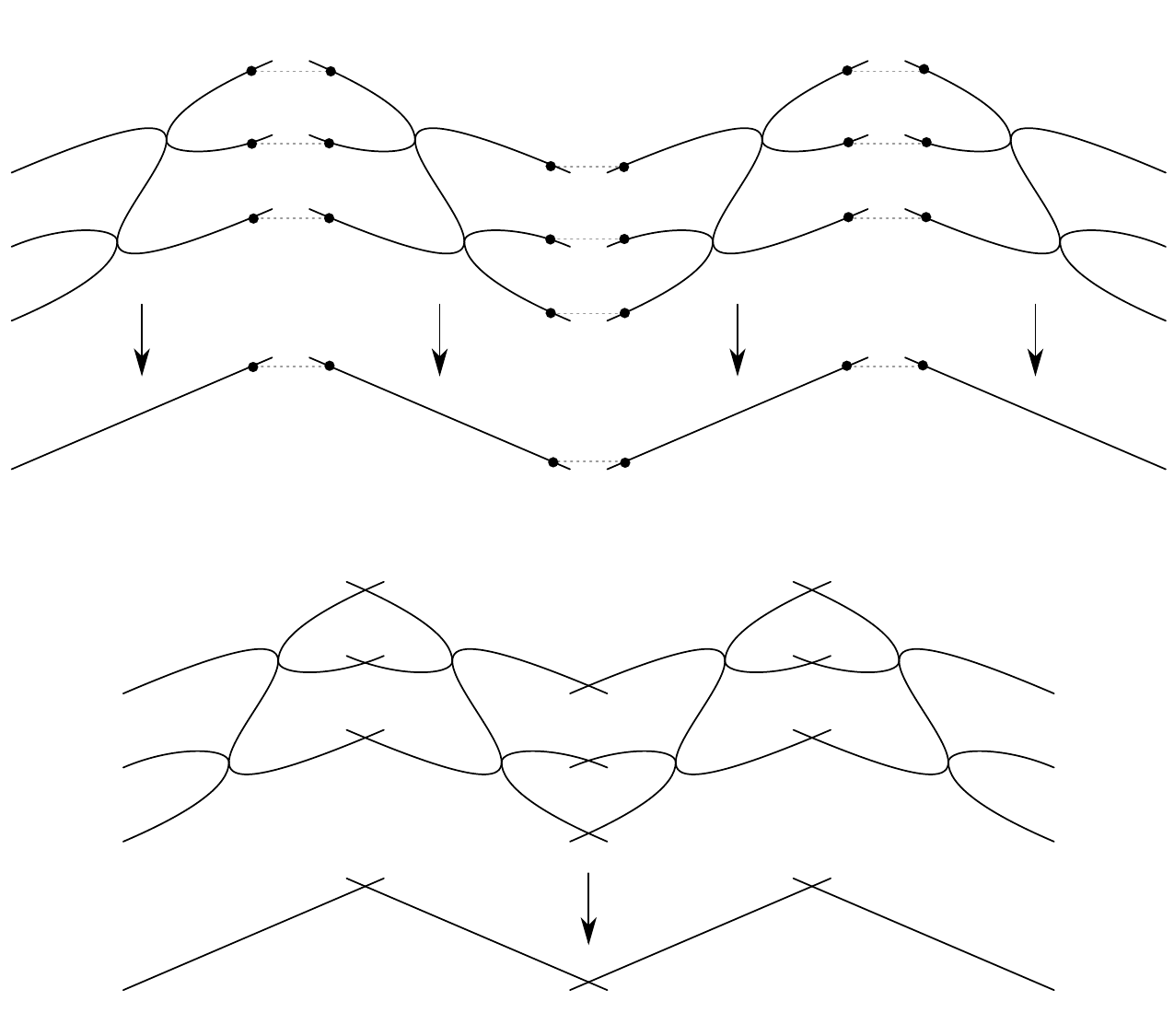
\caption{An admissible cover $[X \to \chain{n}]$  in $\Sigma(d;g_1,\ldots, g_n)$ for $n=4, d=3$.}
\label{figure:broken-cover}
\end{center}
\end{figure}

\begin{lemma}
The space $\Sigma(d;g_1,\ldots, g_n) $ is irreducible.
\end{lemma}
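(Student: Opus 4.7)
The plan is to realize $\Sigma(d; g_1, \ldots, g_n)$ as a finite étale cover of an irreducible base, and then to deduce irreducibility from transitivity of the monodromy action on fibers. Consider the forgetful morphism
\[
\pi \colon \Sigma(d; g_1, \ldots, g_n) \to B := \prod_{i=1}^{n} \H_{d,g_i}^{\circ},
\]
where $\H_{d,g_i}^{\circ} \subset \H_{d,g_i}$ is the Zariski open subset of simply branched covers unramified over $0$ and $\infty$. By the classical Clebsch--Hurwitz theorem, each factor $\H_{d,g_i}$ is irreducible, hence so are the open subsets $\H_{d,g_i}^{\circ}$ and their product $B$. The morphism $\pi$ is finite étale of degree $(d!)^{n-1}$, with fiber over $(\alpha_1,\ldots,\alpha_n)$ equal to the set of tuples $(\phi_1,\ldots,\phi_{n-1})$ of bijections $\phi_i \colon \alpha_i^{-1}(\infty) \to \alpha_{i+1}^{-1}(0)$ --- a torsor over $(S_d)^{n-1}$.

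Since $B$ is connected, the number of irreducible components of $\Sigma$ equals the number of orbits of $\pi_1(B) = \prod_i \pi_1(\H_{d,g_i}^{\circ})$ on this fiber. A loop in the $i$-th factor acts on the tuple by left-multiplying $\phi_{i-1}$ by some element $\rho_i \in S_d$ (the monodromy on $\alpha_i^{-1}(0)$) and right-multiplying $\phi_i$ by $\tau_i^{-1}$ (the monodromy on $\alpha_i^{-1}(\infty)$). Transitivity therefore follows once we show that, for each $i = 1, \ldots, n-1$, there is a loop in $\pi_1(\H_{d, g_{i+1}}^{\circ})$ realizing any prescribed element of $S_d$ on $\alpha_{i+1}^{-1}(0)$ while acting trivially on $\alpha_{i+1}^{-1}(\infty)$: such a loop changes $\phi_i$ by an arbitrary left-multiplication and leaves every other $\phi_j$ unchanged.

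The key input is this monodromy assertion, a classical fact in Hurwitz theory. The argument is to localize braids to a small disk $D_0 \ni 0$ disjoint from $\infty$: as a single branch point $b_j$ encircles $0$ once inside $D_0$, the associated transposition $t_j$ acts on $\alpha^{-1}(0)$, while the $d$ sheets at $\infty$ are undisturbed because the family of covers is a topologically locally trivial étale cover of the complement $\P^1 \setminus D_0$. Since the $t_j$ generate all of $S_d$ (the source of a simply branched cover being connected), composing such loops realizes any $\rho \in S_d$ acting on $\alpha^{-1}(0)$ with trivial action on $\alpha^{-1}(\infty)$, completing the proof of transitivity.

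The main obstacle is the technical verification of the trivial $\infty$-action for braids localized near $0$; this is immediate in the hyperelliptic case $d = 2$ (where the Weierstrass form makes the two sheets globally distinguishable), and follows in general from the classical connectedness theorem for Hurwitz spaces of covers of a disk with prescribed boundary data --- a standard consequence of the Clebsch--Hurwitz methodology that must be invoked carefully rather than rederived.
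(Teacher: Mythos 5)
Your argument is correct and takes essentially the same route as the paper: the forgetful map to the irreducible product $\prod_i \H_{d,g_i}$, followed by transitivity of monodromy on the fibers using the full monodromy of simply branched covers. The paper compresses the monodromy step into a single sentence, whereas you spell out the key point — that braid moves localized near $0$ realize all of $S_d$ on $\alpha_{i}^{-1}(0)$ while fixing $\alpha_{i}^{-1}(\infty)$, so each $\phi_i$ can be altered independently — which is precisely what makes "full monodromy" imply transitivity on the tuples of gluing bijections.
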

\begin{proof}
Consider the forgetful finite map from $g:\Sigma \to \H_{d,g_1} \times \H_{d,g_2} \times \ldots \times \H_{d,g_n}$. The target is irreducible and smooth, so we only have to show that the monodromy on the fibers is transitive. The fibers
correspond to choosing different systems of bijections $\{ \phi_i \}$. But each simply branched cover $\alpha_i: C_i \to \P^1$ has full
monodromy, which in turn induces a transitive action on the set of systems of bijections $\{ \phi_{i} \}$.
\end{proof}

 From now on, we focus our attention on proving the following proposition, which  implies \cref{theorem:F-bundle}.
\begin{proposition}
\label{proposition:admissible-cover-with-balanced-F}
If $g_1=\ldots=g_a=0$ and $g_{a+1}=\ldots=g_{a+b}=1$, then the general admissible cover $\alpha \from X \to \chain{a+b}$
in  $\Sigma(d;g_1,\ldots, g_{a+b})$ has balanced $F$-bundle.
\end{proposition}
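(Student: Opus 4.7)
The plan is to apply the criterion \cref{criteria} to the $F$-bundle of an admissible cover $\alpha \from X \to \chain{a+b}$ in $\Sigma(d;g_{1},\ldots,g_{a+b})$. Since being balanced is an open condition and $\Sigma(d;g_{1},\ldots,g_{a+b})$ is irreducible by the previous lemma, it suffices to produce a \emph{single} admissible cover in the family whose $F$-bundle satisfies the two hypotheses of \cref{criteria}: (1) each restriction $F_{i} := F|_{P_{i}}$ is balanced on the component $P_{i}$, and (2) the left and right flags $L^{\bullet}_{j}$, $R^{\bullet}_{j}$ meet transversely at every node $p_{j}$.

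Condition (1) is immediate from \cref{lowG}: the restriction $F_{i}$ coincides with the $F$-bundle of the component cover $\alpha_{i} \from C_{i} \to P_{i}$, which is balanced by parts (b) and (d) of \cref{lowG} for the rational and elliptic components respectively. The directrix subbundles are therefore explicit: $W_{i} = \sh{O}(2)^{\oplus \binom{d-2}{2}}$ on a rational component, and $W_{i} = F_{i} = \sh{O}(2)^{\oplus d(d-3)/2}$ on an elliptic component.

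The bulk of the work is condition (2), and the key step is a geometric reinterpretation of the directrix. Over $t \in P_{i}$, the fiber $F_{i}|_{t}$ is the space of quadrics through the $d$ points $\alpha_{i}^{-1}(t) \subset \P^{d-2}$, and the fiber $W_{i}|_{t}$ is the subspace of quadrics vanishing on the ambient rational (resp.\ elliptic) normal curve carrying these $d$ points inside the embedded scroll $\iota(C_{i}) \subset \P E_{i}$, exactly as in the proof of \cref{lowG}. Unravelling the inductive definitions of $L^{\bullet}_{j}$ and $R^{\bullet}_{j}$ through this dictionary, the successive ``transport and modify by the directrix'' operations encode spaces of quadrics vanishing on chains of normal curves glued along the node configurations $\alpha^{-1}(p_{1}), \ldots, \alpha^{-1}(p_{a+b-1})$. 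Transversality at $p_{j}$ then becomes the statement that the space of quadrics vanishing on the chain of normal curves to the left of $p_{j}$ and the space vanishing on the chain to the right meet in the expected dimension inside the space of quadrics through the $d$ node points --- precisely the degree $2$ maximal rank statement for ``maximally connected chains'' of rational and elliptic normal curves that the paper isolates as \cref{theorem:maximal-rank}.

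Granting \cref{theorem:maximal-rank}, to be proved independently in \cref{sec:the_maximal_rank_problem_for_maximally_connected_chains}, we can choose our admissible cover generically within $\Sigma(d;g_{1},\ldots,g_{a+b})$ so that the gluing bijections $\phi_{i}$ and the component covers realize a configuration sufficiently general for the maximal rank hypothesis to apply at every node $p_{j}$. Both conditions of \cref{criteria} are then met, yielding $h^{1}(\End F) = 0$ and hence \cref{proposition:admissible-cover-with-balanced-F}. The main obstacle is the translation in the third paragraph: one must verify that the purely linear-algebraic transport and modification operations defining $L^{\bullet}_{j}$ and $R^{\bullet}_{j}$ really do correspond, term by term and node by node, to imposing vanishing on a longer glued chain of normal curves. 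This depends sensitively on the identification of $W_{i}$ with a \emph{trivial} subbundle of $F_{i}$ (after a twist) and on how the node identifications interact with the choice of splitting of $F_{i}$, and is the reason this reduction step occupies its own section.
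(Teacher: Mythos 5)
Your high-level framework---produce a single admissible cover and check the two conditions of \cref{criteria}, with condition (1) supplied by \cref{lowG} and condition (2) to be translated into a maximal-rank statement about quadrics---is the paper's strategy. But there are two concrete gaps in the translation step that you yourself flag as ``the main obstacle,'' and neither is just a matter of bookkeeping.

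First, you invoke a dictionary sending the directrices $W_i \subset F_i$ to quadrics in a fixed $\P^{d-2}$, but this dictionary is only available when every component is rational. On a rational component the Tschirnhausen bundle is $\sh{O}(1)^{\oplus (d-1)}$ (\cref{lowG}(a)), so $\P E_i \simeq \P^{d-2}\times P_i$ and there is a genuine projection to $\P^{d-2}$ under which $C_i$ maps to a rational normal curve. On an elliptic component, $E_i = \sh{O}(1)^{\oplus (d-2)}\oplus\sh{O}(2)$ is \emph{not} a twist of a trivial bundle, so no such projection exists and the phrase ``quadrics through $\alpha_i^{-1}(t)$ in $\P^{d-2}$'' has no fiber-independent meaning. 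You speak of ``chains of rational and elliptic normal curves,'' but \cref{theorem:maximal-rank} is stated only for $\MC_{r,n}$, i.e.\ chains of \emph{rational} normal curves. The paper handles this by first proving \cref{lemma:reduction-to-rational}: since the forgetful map $\Sigma(d;0,\ldots,0,1,\ldots,1)\to\Sigma(d;0,\ldots,0)$ is dominant and the $F$-bundle of the elliptic part is (by \cref{lowG}(d)) trivial up to a twist, balancedness of $F$ descends from the purely rational case. Without that reduction your dictionary breaks at the first elliptic component.

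Second, even after reducing to $b=0$, quadric-genericity (i.e.\ \cref{theorem:maximal-rank}) is \emph{not} by itself equivalent to transverse directrices. The equivalence in \cref{lemma:reduction-to-quadric-generic} requires quadric-genericity \emph{and} the ``transverse residues'' condition of \cref{definition:transverse-residues}. For even $d$ the residue condition is vacuous, but for odd $d$ it is a genuinely separate hypothesis: when one unwinds the modification operations, certain flag elements in $L^\bullet_j$ are cut out not just by containment of a subchain but by a residual-intersection constraint on the neighbouring link (this is item (3) of \cref{lemma:flag-description-odd}), and transversality at the extremal subchain length $\frac{d-1}{2}$ is exactly \cref{claim:general-maximally-connected-chain-has-transverse-residues}, which is proved by a separate dimension count and is \emph{not} a consequence of \cref{theorem:maximal-rank}. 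Your proposal folds everything into one maximal-rank statement and hence silently proves only the even-degree case.

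In short: the outline is right, but the ``translation'' paragraph is doing all the work and as written it skips the $b=0$ reduction (without which the geometric dictionary fails) and the transverse-residues condition (without which the odd-degree case is false on its face).
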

Note that in the context of the proposition above, \cref{eq:genus-of-admissible-cover} becomes
\begin{align*}
	g&=1+ \sum_i (g_i -1) + d(a+b-1)\\
	 &= 1+ a(0-1) +b(1-1) + d(a+b-1) \\
	 &= (a-1)(d-1)+bd.
\end{align*}

We start the proof of \cref{proposition:admissible-cover-with-balanced-F} by reducing to the case where all components
are rational.
\begin{lemma}\label{lemma:reduction-to-rational}
To prove \cref{proposition:admissible-cover-with-balanced-F}, it suffices to prove the case $b=0$.
\end{lemma}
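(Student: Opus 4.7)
The plan is to deduce the general case of \cref{proposition:admissible-cover-with-balanced-F} from the case $b=0$ by exhibiting a single admissible cover in $\Sigma(d;0,\ldots,0,1,\ldots,1)$ with balanced $F$-bundle; since being balanced is an open condition and $\Sigma(d;0,\ldots,0,1,\ldots,1)$ is irreducible, this suffices. Assuming the case $(a,0)$, I fix a cover $\alpha' \from X' \to \chain{a}$ in $\Sigma(d;0,\ldots,0)$ whose $F$-bundle is balanced, and extend $\alpha'$ by attaching $b$ generic degree $d$ elliptic covers $\beta_1,\ldots,\beta_b$ at the right end of the chain, glued via generic bijections on the fibers over the new nodes, to obtain an admissible cover $\alpha \from X \to \chain{a+b}$ in $\Sigma(d;0,\ldots,0,1,\ldots,1)$.

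To verify that the $F$-bundle of $\alpha$ is balanced I apply \cref{criteria}. Each $V_i = F|_{P_i}$ is balanced by \cref{lowG}, so the content is the transversality of directrices at each node. The key observation is that for every elliptic component $P_j$ with $j>a$, $V_j = \cO(2)^{\oplus d(d-3)/2}$ is perfectly balanced: its directrix subbundle equals all of $V_j$, its directrix flag $G^\bullet(V_j|_0) = G^\bullet(V_j|_\infty)$ is the trivial flag $\{0 \subset V_j\}$, and the transport maps $r$ and $l$ through $P_j$ are canonical identifications arising from the triviality of the projective bundle $\P V_j$. A direct unwinding of \cref{refine} shows that modifying any flag by the trivial flag $\{0 \subset V\}$ introduces no new subspaces. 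Hence, starting from the right end of the chain, $R^\bullet_{a+b-1} = l(G^\bullet(V_{a+b}|_\infty))$ is the trivial flag, and inductively each $R^\bullet_j$ for $a \leq j \leq a+b-1$ is trivial. Since the trivial flag is transverse to every flag, the transversality condition of \cref{criteria} at the $b$ nodes $p_a, p_{a+1},\ldots,p_{a+b-1}$ is automatic.

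For $j \leq a-1$, the flag $R^\bullet_j$ for $\alpha$ acquires its first nontrivial content from the directrix of $V_a$ via $R^\bullet_{a-1} = l(G^\bullet(V_a|_\infty))$ and is then built by modifications by the directrix flags of $V_{a-1},\ldots,V_{j+1}$ and transports through those rational components alone---matching exactly the construction of $R^\bullet_j$ for $\alpha'$ on $\chain{a}$. The analogous statement holds for $L^\bullet_j$, which for $j \leq a-1$ depends only on $V_1,\ldots,V_j$ and their gluings, all rational. Therefore the transversality conditions at $p_1,\ldots,p_{a-1}$ for $\alpha$ coincide with those for $\alpha'$ and hold by our choice of $\alpha'$; combining with the automatic conditions at the elliptic portion, $\alpha$ has balanced $F$-bundle, proving the lemma. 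The only technical work is the bookkeeping that a trivial flag is absorbed by $\wedge$ and that the transports $r,l$ across a trivial projective bundle are the canonical identifications of corresponding linear subspaces---both follow immediately from the definitions in \cref{sec:vector-bundles-on-rational-chains}, so there is no real obstacle to the argument beyond writing it down carefully.
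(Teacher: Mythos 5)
Your proof is correct and follows essentially the same strategy as the paper: reduce to the $b=0$ case by observing that the $F$-bundle of a genus-one component is $\cO(2)^{\oplus d(d-3)/2}$, hence trivial up to twist, so attaching elliptic links cannot destroy balancedness. The paper asserts this gluing step as self-evident; you verify it directly via \cref{criteria} by checking that the directrix flags of the elliptic pieces are trivial and hence absorbed by modification and transverse to everything---a reasonable amount of extra detail for a point the paper glosses over, but not a different approach.
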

\begin{proof}
Let $\alpha:X \to \chain{a+b}$ be a general cover in $\Sigma(d;0, ...,  0,1 , ...1)$. Split it into the $a$ rational
components $X_R \to \chain{a}$ and the $b$ genus one components $X_E \to \chain{b}$.

The forgetful map $\Sigma(d;0, ...,  0,1 , ...1) \to \Sigma(d;0, ..., 0)$ (which sends the cover $X \to \chain{a+b}$ to $X_R \to \chain{a}$) is dominant. Hence, if we assume \cref{proposition:admissible-cover-with-balanced-F} is true when $b=0$, we may assume the $F$-bundle of $X_R \to \chain{a}$ is balanced.

Moreover, by \cref{item-F-elliptic} of \cref{lowG}, the $F$-bundle of $X_E \to \chain{b}$ is not only balanced, it is \emph{trivial}, up to a twist by a line bundle. Hence, the $F$-bundle of $\alpha: X\to \chain{a+b}$---which is obtained by gluing the 
(balanced) $F$-bundle of $X_R \to \chain{a}$ with the (trivial, up to twist) $F$-bundle of $X_E \to \chain{b}$---is balanced as well.
\end{proof}

Given this lemma, we need only consider the case $g_1=\ldots=g_a=0$. So we simplify notation by setting
\[
	\Sigma_{d,a} := \Sigma(d;0,0,\ldots,0).
\]

Let $\alpha:X \to \chain{a}$ be a general cover in $\Sigma_{d,a}$. We want to show that its $F$-bundle is balanced. Each component of $X$ is rational---we will denote the components by $R_1,R_2,\ldots, R_a$.

From \cref{lowG}, the Tschirnhausen bundle $E_{\alpha_{i}}$ for each component $R_{i}$ is trivial up to a twist by a line bundle.
Hence the Tschirnhausen bundle $E$ for the entire cover $\alpha \from X \to \chain{a}$ is trivial up to a twist as well, which implies $\P(E) \simeq \P^{d-2} \times \chain{a}$. Composing the relative canonical  embedding 
$X \hookrightarrow \P(E) = \P^{d-2} \times \chain{a}$ with the projection onto the first factor gives us a map $X \to \P^{d-2}$. As in the proof of \cref{lowG}, this
map is easily seen to be an embedding, and each component $R_i$ maps to a rational normal curve in $\P^{d-2}$.  The resulting embedded curve $X \subset \P^{d-2}$ is the main object of our investigation moving forward, so we make the following definition:

\begin{definition}
\label{definition:maximally-connected-chain}
A \emph{maximally connected chain of length} $n$ in $\P^r$ is a reducible nodal curve 
$X = R_1 \cup \ldots R_n \subset \P^r$ with $n$ components, which are called \emph{links}. Each 
link $R_i$ is a non-degenerate, degree $r$, nodal, connected curve, and consecutive links $R_i \cap R_{i+1}$ meet in $r+2$ points; otherwise there are no further intersections between components.  The points $R_{i} \cap R_{i+1}$ will be called \emph{anchor points}. See \cref{figure:maximally-connected-chain}.

Let $\MC_{r,n}$ denote the quasi-projective variety parametrizing all maximally connected chains of length $n$ in $\P^r$.
\end{definition}
\begin{remark}
For a generic maximally connected chain, each link $R_i$ is a smooth rational normal curve. We will eventually need to consider degenerate maximally connected chains where the $R_{i}$ become singular.
\end{remark}

\begin{figure}
\begin{center}
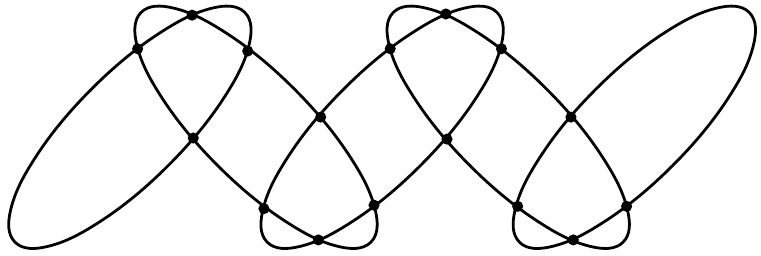
\caption{An illustration\protect\footnotemark of a maximally connected chain in $\P^2$.
}
\label{figure:maximally-connected-chain}
\end{center}
\end{figure}
\footnotetext{
Strictly speaking, 
our definition \cref{definition:maximally-connected-chain} does not allow for maximally connected chains in $\P^2$ because pairs of non-consecutive links will meet. However, for illustration purposes the real points of a chain of conics make a more informative and legible picture than a chain of twisted cubics, meeting pairwise at 5 points.}

\begin{lemma}
The space $\MC_{r,n}$  is irreducible of dimension 
$(r-1)(r+3)+(n-1)(2r+1)$.
\end{lemma}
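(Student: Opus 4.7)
I would prove this by parametrizing $\MC_{r,n}$ inductively, building up the chain one link at a time. The required numerical facts about rational normal curves in $\P^r$ are standard: (i) the Hilbert scheme of rational normal curves in $\P^r$ is irreducible of dimension $(r-1)(r+3)$, and (ii) passing through a point is a codimension $r-1$ condition on an RNC, so that through $r+2$ general points in $\P^r$ the RNCs sweep out a family of dimension $r-1$, and through $r+3$ general points there is a unique RNC.

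With (i) and (ii) in hand, the parametrization goes as follows. Start with the Hilbert scheme of rational normal curves $R_1 \subset \P^r$, contributing $(r-1)(r+3)$ parameters. Then, for each successive $i = 1, \ldots, n-1$, adjoin the datum of an unordered $(r+2)$-tuple $S_i$ of distinct anchor points on $R_i$ (contributing $r+2$ parameters, since $R_i \cong \P^1$), followed by an RNC $R_{i+1}$ passing through $S_i$ (contributing $r-1$ parameters by (ii)). Each step exhibits the stage-$(i+1)$ parameter space as a fibration over the stage-$i$ space with equidimensional irreducible fibers. Summing the contributions gives
\[
\dim \MC_{r,n} = (r-1)(r+3) + (n-1)\bigl[(r+2)+(r-1)\bigr] = (r-1)(r+3) + (n-1)(2r+1),
\]
and irreducibility of $\MC_{r,n}$ follows inductively, since an irreducible fibration over an irreducible base is irreducible.

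\textbf{Main obstacle.} The only nontrivial input is the irreducibility of the $(r-1)$-dimensional family of RNCs through $r+2$ general points in $\P^r$. I would deduce this from the classical uniqueness of an RNC through $r+3$ general points. Concretely, fix general $q_1, \ldots, q_{r+2} \in \P^r$ and send a variable point $q_{r+3} \in \P^r$ to the unique RNC $R(q_{r+3})$ through $q_1, \ldots, q_{r+3}$; this defines a dominant rational map from $\P^r$ onto the space of RNCs through the fixed $r+2$ points, since every such RNC $R$ is recovered by choosing any sufficiently general further point on $R$. As the image of an irreducible variety, this space is irreducible.
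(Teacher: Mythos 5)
Your proposal follows essentially the same inductive scheme as the paper: both treat $\MC_{r,n} \to \MC_{r,n-1}$ as a forgetful fibration, decompose each fiber as (choice of $r+2$ anchor points on $R_{n-1}$) $\times$ (choice of an RNC through those points), and sum $(r+2)+(r-1)=2r+1$. The only real difference is in establishing irreducibility of the $(r-1)$-dimensional family of RNCs through $r+2$ fixed general points: the paper invokes Kapranov's identification of that family with $\overline{\mathcal{M}}_{0,r+2}$, while you give a more self-contained argument via the dominant rational map $\P^r \dashrightarrow \{\text{RNCs through the } r+2 \text{ points}\}$ sending an extra point $q_{r+3}$ to the unique RNC through all $r+3$; both are correct, and your version avoids citing Kapranov at the cost of not pinning down the moduli space explicitly.
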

\begin{proof}
We use induction on $n$. For $n=1$, we are simply parametrizing rational normal curves in $\P^r$ (possibly nodal), and their
parameter space is irreducible of dimension $(r-1)(r+3)$, as we wanted to show. 

For the induction 
step, notice that the space $\MC_{r,n}$ admits a dominant forgetful morphism to $\MC_{r,n-1}$ by forgetting the last 
component. It is enough to show that the fibers are irreducible of dimension $2r+1$. 
The fiber over $R_1 \cup \ldots R_{n-1} \in \MC_{r,n-1}$ is an open subset of the scheme of rational normal curves $R_n$ meeting 
$R_{n-1}$ in $r+2$ points. This is irreducible, since upon fixing the $r+2$ points of intersection, 
the space of rational normal curves through them is isomorphic to the moduli space $\overline{\mathcal{M}}_{r+2,0}$, which has dimension $r-1$. (This is the  \emph{Kapranov model} of $\overline{\mathcal{M}}_{r+2,0}$ as in \cite{kapranov_veronese_1993}.)
In total, the fiber of the forgetful map has dimension $(r+2)+(r-1)=2r+1$, which allows us to conclude the lemma.
\end{proof}

By projecting an admissible cover of type $\alpha:X \to \chain{a}$ in its relative canonical embedding $X \, \into \,\P^{d-2} \times \chain{a}$ to $\P^{d-2}$, we obtain a
maximally connected chain of length $a$ in $\P^{d-2}$. That is, there is a natural map $f :\Sigma_{d,a} \to \MC_{d-2,a}$.
Conversely, we claim that a generic
maximally connected chain can be realized as such a projection of an admissible cover in $\Sigma_{d,a}$.

\begin{lemma} \label{lemma:projection-is-dominant}
The map $f :\Sigma_{d,a} \to \MC_{d-2,a}$ is dominant.
\end{lemma}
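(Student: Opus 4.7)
The plan is to construct, for a general maximally connected chain $X = R_1 \cup \cdots \cup R_a \in \MC_{d-2,a}$, an explicit preimage of $X$ under $f$.

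First, I will build the cover from the anchor data. For generic $X$, each link $R_i$ is a smooth rational normal curve of degree $d-2$, hence abstractly $R_i \cong \P^1$. After fixing such identifications, the anchor sets give effective degree-$d$ divisors $D_i^0 := R_{i-1} \cap R_i$ and $D_i^\infty := R_i \cap R_{i+1}$ on each link (where defined). On each interior link $2 \leq i \leq a-1$, the pencil $\langle D_i^0, D_i^\infty \rangle \subset |\cO_{R_i}(d)|$ is base-point-free for general $X$, and determines the unique degree-$d$ morphism $\alpha_i \from R_i \to P_i \cong \P^1$ with $\alpha_i^{-1}(0) = D_i^0$ and $\alpha_i^{-1}(\infty) = D_i^\infty$. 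On each outer link $R_1$ or $R_a$, I freely choose the missing anchor divisor (a general effective degree-$d$ divisor disjoint from the prescribed one), and define $\alpha_1$ and $\alpha_a$ analogously. These morphisms glue along the identity bijections on the anchor sets to yield an admissible cover $\alpha \from X \to \chain{a}$ in $\Sigma_{d,a}$: rationality of each component is built in, while simple branching and unramifiedness over $\{0,\infty\}$ are open conditions met for general $X$.

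Next, I will check that $f(\alpha)$ recovers $X$, up to the $\mathrm{PGL}_{d-1}$-ambiguity inherent in the trivialization of $\P E$. By \cref{item-E-rational} of \cref{lowG}, $E_{\alpha_i} = \cO(1)^{\oplus(d-1)}$ for each $i$, so $\P E$ is a $\P^{d-2}$-bundle on the tree $\chain{a}$ and hence globally trivializable. Arguing exactly as in the proof of \cref{item-F-rational} of \cref{lowG}, the relative canonical embedding of $\alpha_i$ coincides on $R_i$ with the complete $|\cO_{R_i}(d-2)|$-embedding, realizing $R_i$ as a rational normal curve of degree $d-2$ in $\P^{d-2}$. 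Choosing the global trivialization of $\P E$ so that the image of $R_1$ coincides with the original $R_1 \subset \P^{d-2}$, the requirement that the two common fibers over each node $p_i$ be identified with the original anchor points forces, link by link, each subsequent embedded link to coincide with the original $R_i$.

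The hard part will be this inductive compatibility of trivializations across the nodes. It reduces to showing that the $d$ generic anchor points on a rational normal curve of degree $d-2$ in $\P^{d-2}$ have finite stabilizer in $\mathrm{PGL}_{d-1}$, so the trivialization of $\P E|_{P_{i+1}}$ is determined (up to a discrete ambiguity) by the trivialization over $P_i$ and the anchor data, allowing propagation along the chain. This finiteness is a standard dimension count against the $\mathrm{PGL}_2$-stabilizer of a rational normal curve inside $\mathrm{PGL}_{d-1}$, and suffices for dominance.
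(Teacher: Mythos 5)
Your first paragraph — constructing $\alpha \from X \to \chain{a}$ by taking, on each interior link, the pencil spanned by the two anchor divisors, and choosing a free pencil on the two end links — is exactly the paper's construction. The divergence, and the gap, is in how you then identify $f(\alpha)$ with $X$.

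The paper closes this in one step: the composite embedding $x \mapsto (x,\alpha(x))$ of $X$ into $\P^{d-2}\times\chain{a}$ has, over every $t\in\chain{a}$, a fiber consisting of $d$ points in linearly general position in $\P^{d-2}$ (they sit on a rational normal curve of degree $d-2$), hence an arithmetically Gorenstein length-$d$ subscheme. By the Casnati--Ekedahl characterization this embedding \emph{is} the relative canonical embedding. That forces $f(\alpha)=X$ on the nose, with no link-by-link bookkeeping.

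Your substitute for this step is incorrect in two ways. First, the concern you flag — that the trivialization of $\P E|_{P_{i+1}}$ might not be determined by the trivialization over $P_i$ — is a non-issue: $\P E$ restricts to a trivial $\P^{d-2}$-bundle on each component of the tree $\chain{a}$, so it is \emph{globally} trivial, and a global trivialization is unique up to a single constant element of $\mathrm{PGL}_{d-1}$. Once you have used that single element to normalize the embedded $R_1$ to equal the original $R_1$, there is no further choice to propagate, and the finiteness of the stabilizer of $d$ anchor points in $\mathrm{PGL}_{d-1}$ is beside the point. Second, and more seriously, the actual thing that must be shown is that the (now uniquely determined) relative canonical image of $R_2$ coincides with the original $R_2 \subset \P^{d-2}$. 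You assert this is ``forced'' by the requirement that the anchor points match, but matching $d$ anchor points does not pin down a rational normal curve of degree $d-2$ through them — there is a positive-dimensional family of such curves — and nothing in your inductive setup rules out the embedded $R_2$ being a different member of that family. The missing input is precisely the arithmetically Gorenstein characterization of the relative canonical embedding, which is what the paper invokes and which your proof never uses. Without it, the chain of claimed coincidences ``link by link'' is unsupported.
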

\begin{proof}
Let $X \subset \P^{d-2}$ be a general maximally connected chain of length $n$, and let $R_{1}, \ldots , R_{a}$ be its components.  For $i = 2, \ldots ,a-1$, the rational curve $R_{i}$ has a distinguished pencil of divisors on $\cO_{R_{i}}(d)$, namely the pencil spanned by the two sets of anchor points $R_{i} \cap R_{i-1}$ and $R_{i} \cap R_{i+1}$.  Let $0 \in \P^1$ correspond to the former, and let $\infty \in \P^1$ correspond to the latter. Now, for $R_{1}$ (and $R_{a}$), simply choose a basepoint free pencil of sections of $\cO_{R_{1}}(d)$  (resp. $\cO_{R_{a}}$) containing $R_{1} \cap R_{2}$ (resp. $R_{a-1} \cap R_{a}$). The data of the curves $R_{i}$ along with the pencils of degree $d$ divisors on each gives rise to an admissible cover $\alpha \from X \to \chain{a}$. The map $X \to \P^{d-2} \times \chain{a}$ is the relative canonical embedding for $\alpha$, because each fiber $X_t \subset \P^{d-2}$ is a length $d$ arithmetically Gorenstein subscheme. Therefore, $f(\alpha)$ is the
original maximally connected chain $X \subset \P^{d-2}$, as we wanted to show.
\end{proof}

Two properties of maximally connected chains will be important for us.

\begin{definition}
A maximally connected chain $X = R_1\cup \ldots \cup R_n \subset \P^r$ is \emph{quadric-generic} if for any subchain
$Y=R_i \cup R_{i+1} \cup \ldots \cup R_{i+j}$, the restriction map 
\[
	H^0(\P^r, \sh{O}_{\P^r}(2)) \to H^0(Y,\sh{O}_Y(2))
\]
has maximal rank.
\end{definition}
\begin{definition}
Let $X=R_1 \cup \ldots \cup R_n \subset \P^r$ be a maximally connected chain. The residual intersection with $R_{i+1}$
of a quadric $Q$ containing a link $R_i$ is
\[
	\operatorname{res}_{R_{i+1}}(Q)= Q \cap R_{i+1} - R_i \cap R_{i+1}
\]
That is, the $r-2$ points of intersection of $Q$ with $R_{i+1}$ besides the $r+2$ anchor points $R_i \cap R_{i+1}$.
\end{definition}

\begin{definition} \label{definition:transverse-residues}
 A maximally connected chain $R_1 \cup \ldots \cup R_n \subset \P^r$ has \emph{transverse residues}
if either
\begin{itemize}
\item $r$ is even, or
\item $r$ is odd and
	for any subchain $Y$ of length $r+2$, 
	there are no quadrics $Q_\text{left}, Q_\text{right}$ satisfying 
	\[
		Y_\text{left} \subset Q_\text{left}, \, Y_\text{right} \subset Q_\text{right} \text{ and }\operatorname{res}_{R_\text{middle}}(Q_\text{left}) = \operatorname{res}_{R_\text{middle}}(Q_\text{right})
	\]
	where 
	\begin{itemize}
		\item $Y_\text{left}$ be the first $\frac{r+1}{2}$ links of $Y$,
		\item $R_\text{middle}$ the middle rational curve, and 
		\item $Y_\text{right}$ the remaining $\frac{r+1}{2}$ links.
	\end{itemize}
	
\end{itemize}
\end{definition}

\begin{remark}
While contrived at first sight, having transverse residues simply reflects the expectation
that the naive dimension count goes through. 
Indeed, the pair of quadrics $Q_\text{left}$ and $Q_\text{right}$ has to satisfy
\begin{itemize}
	\item $(r+2) + (r-1)\frac{r+1}{2}$ conditions for $Y_\text{left}\subset Q_\text{left}$, plus
	\item $(r+2) + (r-1)\frac{r+1}{2}$ conditions for $Y_\text{right}\subset Q_\text{right}$, plus
	\item $r-2$ conditions for $\operatorname{res}_{R_\text{middle}}(Q_\text{left}) = \operatorname{res}_{R_\text{middle}}(Q_\text{right})$.
\end{itemize}
We get a total of
\[
	2\times \left( r+2 + (r-1)\frac{r+1}{2} \right) + r-2 =r^2 +3r+1 = 2 \times \left( \binom{r+2}{2} -1 \right) +1
\]
conditions, which exceeds by one the dimensions available for choosing a pair of quadrics $(Q_\text{left},Q_\text{right})$.
\end{remark}

We may detect if the admissible cover $\alpha:X \to \chain{a}$ in $\Sigma_{d,a}$ has balanced
$F$-bundle only using properties of the maximally connected $f(\alpha) \in \MC_{d-2,a}$.

\begin{lemma} \label{lemma:reduction-to-quadric-generic}
A cover $\alpha:X \to \chain{a}$ in $\Sigma_{d,a}$ has balanced $F$-bundle if and only if 
$f(\alpha) \in \MC_{d-2,a}$ is quadric-generic and has transverse residues.
\end{lemma}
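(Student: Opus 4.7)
The plan is to apply \cref{criteria} to the $F$-bundle on $\chain{a}$. That theorem reduces balancedness to two conditions: each restriction $F|_{R_i}$ is balanced as a bundle on $R_i \cong \P^1$, and $F$ has transverse directrices at every internal node $p_j$. The first condition is automatic from \cref{lowG}\cref{item-F-rational}, which gives $F|_{R_i} = \sh O(1)^{\oplus d-3} \oplus \sh O(2)^{\oplus \binom{d-2}{2}}$ and identifies the directrix $W_i$ with the $\sh O(2)^{\binom{d-2}{2}}$-summand. The proof of that lemma further identifies $H^0(R_i,F|_{R_i}(-2))$ with the space of quadrics in $\P^{d-2}$ containing the rational normal curve $R_i$, and hence the fiber $W_i|_t \subset F|_t$ at any $t \in R_i$ canonically corresponds to that space of quadrics, sitting inside the full space of quadrics in $\P^{d-2}$ through the $d$ points $\alpha_i^{-1}(t)$.

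With this geometric interpretation in place, the task becomes translating transverse directrices into conditions on the maximally connected chain $f(\alpha)$. Building the left flag $L^\bullet_j$ at the node $p_j$ inductively using the right-transport and modification operations from \cref{subsub-the-two-natural-filtrations}, I expect that successive intersection steps ``extend the subchain from the left.'' Concretely, I plan to prove by induction that the smallest nonzero element of $L^\bullet_j$ is the space of quadrics through the anchor points $R_j \cap R_{j+1}$ containing all of $R_1 \cup \ldots \cup R_j$, while the intermediate flag elements are spaces of quadrics containing the shorter tail subchains $R_i \cup \ldots \cup R_j$ (or sums of such spaces with smaller ones, per \cref{refine}). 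A symmetric description holds for $R^\bullet_j$ built out of $R_{j+1} \cup \ldots \cup R_a$. The inductive step rests on two geometric avatars: right transport along $R_j$ realises the identification coming from triviality of $\P W_j$ fiberwise, and modification by $G^\bullet(V_j|_0)$ imposes the containment condition $Q \supset R_j$.

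Once the flags are identified, the transversality of each pair $(L^i_j, R^{N-i}_j)$ becomes a linear-algebraic statement about quadric spaces. At the deepest level, transversality of the smallest nontrivial elements of $L^\bullet_j$ and $R^\bullet_j$ records the dimension equality
\[
\dim\{Q \supset R_1 \cup \ldots \cup R_a\} = \dim\{Q \supset R_1 \cup \ldots \cup R_j\} + \dim\{Q \supset R_{j+1} \cup \ldots \cup R_a\} - \dim F|_{p_j}.
\]
Running over all nodes---and observing that restricting attention to a subchain proves the analogous equality for any sub-interval---yields exactly the quadric-generic condition. The remaining intermediate transversalities will be checked by dimension count; when $d-2$ is even they all follow from quadric-genericity, but when $d-2$ is odd a single codimension-one condition survives at each internal node of a length-$d$ subchain, arising from the comparison between the left and right flag elements whose ``subchain labels'' meet at a distinguished middle link. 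Matching this condition against \cref{definition:transverse-residues} should recover the transverse residues hypothesis verbatim, because the obstruction is precisely the coincidence of the residual intersections of a quadric $Q_{\text{left}}$ through $Y_{\text{left}}$ and a quadric $Q_{\text{right}}$ through $Y_{\text{right}}$ on the middle link.

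The main obstacle will be the explicit bookkeeping of the inductive flag construction, especially because each modification step produces elements both below and above the previous directrix; recognising all of these as subchain-containment spaces is the combinatorial heart of the argument. The parity dichotomy between the two hypotheses emerges from the simple observation that a length-$(r+2)$ subchain has a distinguished middle link only when $r$ is odd, which is exactly when the intermediate transversality check yields a nontrivial extra condition. Once the flag identification and the residue interpretation are firmly in place, the equivalence reduces to a direct dimension count.
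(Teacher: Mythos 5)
Your overall plan---apply \cref{criteria}, use \cref{lowG}\cref{item-F-rational} and the proof of that lemma to identify directrices with spaces of quadrics through rational normal curves, build the left and right flags inductively, and read off transversality as a dimension count on quadric spaces---is exactly the route the paper takes (\cref{lemma:description-of-directrix}, \cref{lemma:flag-description-even}, \cref{lemma:flag-description-odd}). So the strategy is right, but two load-bearing claims in your sketch are not quite correct and would need to be repaired before the argument closes.

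First, the claim that \emph{the smallest nonzero element of $L^\bullet_j$ is the space of quadrics through the anchor points that contain all of $R_1 \cup \ldots \cup R_j$} fails as soon as $j$ is large: once the subchain $R_1 \cup \ldots \cup R_j$ has length at least $d/2$ there are no quadrics containing it, yet the left flag certainly still has nonzero bottom step. What actually happens (see \cref{lemma:flag-description-even}) is that the flag length is capped at $\min(j+1, d/2)$, so the smallest element is the space of quadrics through the subchain $R_{j-\ell+1}\cup \ldots \cup R_j$ with $\ell$ maximal such that this space is nonzero. Your induction would need to record and propagate this stabilization; without it, the description of $L^\bullet_j$ for $j$ past the threshold is simply wrong, and the transversality check at those later nodes cannot be carried out.

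Second, and more substantively, your statement that \emph{recognising all of these as subchain-containment spaces is the combinatorial heart of the argument} misidentifies what the span step of the modification produces in the odd case. When $d$ is odd the elements $L^i_j + W_{j+1}$ generated by the modification are \emph{not} of the form $\{Q : Q \supset R_a \cup \ldots \cup R_b\}$; they are spaces of quadrics containing a subchain \emph{and} whose residual intersection with the next link agrees with the residual intersection coming from a quadric through a length-$\tfrac{d-1}{2}$ subchain on the other side (paper's \cref{lemma:flag-description-odd}, item 3). It is precisely these elements---not subchain-containment spaces---that force the transverse-residues hypothesis: quadric-genericity alone controls only the honest subchain-containment spaces. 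You do gesture at this with ``the residue interpretation,'' and you correctly identify the parity dichotomy and which coincidence of residuals is the obstruction, but the description of the flag itself needs to be corrected before the dimension count you propose can be made rigorous. Once both of these are patched, the rest of your argument aligns with the paper's.
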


\Cref{proposition:admissible-cover-with-balanced-F} -- and hence \cref{theorem:F-bundle} -- follows from 
\cref{lemma:projection-is-dominant,lemma:reduction-to-quadric-generic,lemma:reduction-to-rational} and the following claims, which will be proved in \cref{sec:the_maximal_rank_problem_for_maximally_connected_chains}.
\begin{claim} \label{claim:general-maximally-connected-chain-is-quadric-generic}
A general maximally connected chain is quadric-generic.
\end{claim}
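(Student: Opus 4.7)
The approach is to combine an openness argument with induction on the length $n$. Since being quadric-generic is an open condition on the irreducible variety $\MC_{r,n}$, it suffices to exhibit a single quadric-generic maximally connected chain of each length $n$ (we focus on $r \geq 2$, which is the relevant range). I will construct such a chain one link at a time. The base case $n=1$ is the classical statement that a rational normal curve $R \subset \P^r$ satisfies $h^0(\sh I_R(2)) = \binom{r}{2}$, so the restriction $H^0(\P^r, \sh O(2)) \to H^0(R, \sh O(2r))$ is surjective.

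For the inductive step, suppose $X = R_1 \cup \ldots \cup R_n$ is quadric-generic and let $R_{n+1}$ be a generic link meeting $R_n$ at the $r+2$ anchor points $A$. Subchains of $X \cup R_{n+1}$ not containing $R_{n+1}$ are already subchains of $X$, so they satisfy the maximal rank property by the inductive hypothesis; the one-link subchain $\{R_{n+1}\}$ is handled by the base case. The new subchains to verify are $Y_i := R_i \cup \ldots \cup R_{n+1}$ for $i = 1, \ldots, n$. Writing $Y_i = Y_i' \cup R_{n+1}$ with $Y_i' := R_i \cup \ldots \cup R_n$, the kernel of the surjection $\sh O_{Y_i}(2) \twoheadrightarrow \sh O_{Y_i'}(2)$ is $\sh O_{R_{n+1}}(2)(-A) \cong \sh O_{\P^1}(r-2)$, which has vanishing $h^1$. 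The resulting short exact sequence on global sections, combined with the inductive hypothesis that the restriction $\rho_{Y_i'}$ has maximal rank, reduces the maximal rank of $\rho_{Y_i}$ to the maximal rank of the residue map
\[
\mu_i : H^0(\sh I_{Y_i'}(2)) \longrightarrow H^0(R_{n+1}, \sh O_{\P^1}(r-2))
\]
sending a quadric $Q$ vanishing on $Y_i'$ to its restriction $Q|_{R_{n+1}}$ (which automatically vanishes along $A$).

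The main obstacle is to choose $R_{n+1}$ so that the residue maps $\mu_1, \ldots, \mu_n$ simultaneously attain maximal rank. Since maximal rank is an open condition and there are only finitely many $\mu_i$, it suffices to show that each $\mu_i$ individually attains maximal rank for a generic $R_{n+1}$ through $A$. I would attack this by specialization: degenerate $R_{n+1}$ to a reducible curve---for instance, the union of a rational normal curve of degree $r-1$ through $r+1$ of the anchor points and a chord through the remaining one---so that $\mu_i$ decomposes into more elementary pieces, each controllable via the inductive hypothesis or a direct cohomology computation. Semicontinuity of matrix rank then upgrades the specialized statement to the generic $R_{n+1}$. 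The dimension heuristic following \cref{definition:transverse-residues} suggests that the relevant numerology balances out exactly, so the argument should close.
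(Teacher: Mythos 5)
Your setup is correct: irreducibility of $\MC_{r,n}$ plus openness of the condition reduces the claim to exhibiting one quadric-generic chain, the $n=1$ base case is the classical statement about rational normal curves, and the reduction of the inductive step to the residue maps
\[
\mu_i \from H^0(\sh I_{Y_i'}(2)) \longrightarrow H^0\bigl(R_{n+1}, \sh O_{\P^1}(r-2)\bigr)
\]
via the kernel sheaf $\sh O_{R_{n+1}}(2)(-A)$ is sound. The gap is in the specialization step. You propose to degenerate the single link $R_{n+1}$ into a line plus a degree-$(r-1)$ rational normal curve $\overline R \subset H$ lying in a hyperplane, and assert that this makes the $\mu_i$ ``controllable via the inductive hypothesis or a direct cohomology computation.'' But your inductive hypothesis is about maximally connected chains of length $\le n$ in the \emph{same} $\P^r$; after the degeneration, the relevant restriction data lives on $\overline R \subset H \cong \P^{r-1}$, and nothing in an induction purely on $n$ gives you a foothold there. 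The pieces of $\mu_i$ that land on $\overline R$ amount to a maximal rank problem for quadrics in $\P^{r-1}$ against a configuration that is \emph{not} a subchain of $Y_i'$ (it is the hyperplane section of $Y_i'$ together with $\overline R$), so the claimed decomposition into ``elementary pieces each controllable by the inductive hypothesis'' is not established. Without a second parameter to induct on, this step does not close.

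For comparison, the paper does not induct on $n$ at all for the maximal rank statement (\cref{theorem:maximal-rank}): it inducts on the ambient dimension $r$, and, crucially, degenerates the \emph{entire} chain $X$ (\cref{construction:eric}), not just the last link, so that its intersection with a hyperplane $H$ is itself a maximally connected chain $\overline X$ of length $n-1$ in $\P^{r-1}$. The sequence $0 \to \sh I_{C}(1) \to \sh I_{X}(2) \to \sh I_{\overline X \subset H}(2) \to 0$ then reduces the problem to the lower-dimensional chain $\overline X$ (handled by induction on $r$) together with a linear-series statement about the residual curve $C = R_0 \cup L_1 \cup \cdots \cup L_{n-1}$, which is settled by adding the lines $L_i$ one at a time (\cref{claim:strict-inclusion}, proved via \cref{lemma:induction-step-join}). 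The quadric-genericity claim then follows from the theorem together with dominance of the subchain-forgetting map $\MC_{r,n} \to \MC_{r,n_2-n_1+1}$. If you want to salvage your route, you would at minimum need a double induction on $(r,n)$ and a concrete verification that the residue maps split along the hyperplane section in a way the lower-dimensional hypothesis can reach; as written, the argument leaves that hard part to the reader.
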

\begin{claim}\label{claim:general-maximally-connected-chain-has-transverse-residues}
 A general maximally connected chain has transverse residues.
\end{claim}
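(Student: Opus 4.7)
\emph{Plan of proof.} When $r$ is even the claim is vacuous from \cref{definition:transverse-residues}, so assume $r$ is odd. Since the condition ``no bad pair exists for a given subchain'' is open in $\MC_{r,n}$, and since every length-$(r+2)$ subchain of a general length-$n$ chain is itself a general length-$(r+2)$ chain (by dominance of the restriction-to-subchain map and irreducibility of $\MC_{r,n}$), it is enough to prove the claim when $n = r+2$.

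Fix $n=r+2$ and form the incidence variety
\[
I \;=\; \left\{\, (X, Q_L, Q_R) \in \MC_{r,r+2} \times |\cO_{\P^r}(2)| \times |\cO_{\P^r}(2)| \;:\; Y_L \subset Q_L,\; Y_R \subset Q_R,\; \operatorname{res}_{R_\text{middle}}(Q_L) = \operatorname{res}_{R_\text{middle}}(Q_R) \right\}
\]
together with the projection $p\from I \to \MC_{r,r+2}$. The tally in the remark following \cref{definition:transverse-residues} shows that the expected dimension of a generic fiber of $p$ is $-1$: the $r^2+3r+1$ imposed conditions exceed the $r^2+3r$ available parameters by exactly one. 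Thus it suffices to exhibit a single chain $X_0 \in \MC_{r,r+2}$ over which $p^{-1}(X_0)$ is empty; openness of this property combined with the irreducibility of $\MC_{r,r+2}$ then forces non-dominance of $p$ and hence the desired genericity.

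The strategy for producing such an $X_0$ is by specialization to a chain whose quadric-containment conditions admit an explicit description. Two natural candidates are (i) a chain of rational normal curves all lying on a common rational normal scroll $S \subset \P^r$, where the spaces of quadrics through the sides $Y_L$ and $Y_R$ decompose according to the Picard group of $S$, and (ii) a chain in which each rational normal link is allowed to degenerate to a chain of $r$ lines, so that ``contained in a quadric'' becomes a linear condition line-by-line. In either model one can write the linear systems of quadrics through $Y_L$ and $Y_R$ explicitly and reduce ``no bad pair'' to the injectivity of the residue difference map
\[
(Q_L, Q_R) \;\longmapsto\; \operatorname{res}_{R_\text{middle}}(Q_L) - \operatorname{res}_{R_\text{middle}}(Q_R) \;\in\; \operatorname{Div}^{r-2}(R_\text{middle}).
\]

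The main obstacle is identifying a degeneration that simultaneously (i) lies in $\overline{\MC_{r,r+2}}$ so that semi-continuity may be invoked, and (ii) makes both the linear system of side-quadrics and the restrictions to $R_\text{middle}$ transparent enough for a hands-on check. Once such an $X_0$ is available, the remaining task is a linear-algebraic verification pre-ordained by the dimension count: since conditions exceed parameters by exactly one, one only needs to rule out a single accidental coincidence among residual divisors on $R_\text{middle}$, and this is enforced by placing the anchor points $R_\text{middle} \cap R_{\text{middle}-1}$ and $R_\text{middle} \cap R_{\text{middle}+1}$ in sufficiently general position on $R_\text{middle} \cong \P^1$.
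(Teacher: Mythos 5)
Your setup is correct and matches the paper's general strategy: reduce to $n=r+2$ with $r$ odd, observe that the expected dimension count makes failure of transverse residues a codimension-one phenomenon, and aim to exhibit a single chain $X_0$ over which the bad incidence variety has empty fiber. However, the heart of the argument --- actually producing $X_0$ and verifying the fiber over it is empty --- is missing, and you say so yourself (``the main obstacle is identifying a degeneration\dots''). The two candidate specializations you float (a chain on a scroll, or links degenerating to chains of lines) are not carried out, and it is not obvious either one makes the residue-comparison tractable.

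The paper resolves this by reusing \cref{construction:eric} from the proof of \cref{theorem:maximal-rank}. They build $X$ so that both halves $\Xleft \cup \Rmiddle$ and $\Rmiddle \cup \Xright$ arise from that construction, with all links after $\Rmiddle$ of the form $L_i \cup \overline{R}_i$ where $\overline{R}_i$ lies in a fixed auxiliary hyperplane $H$. The crucial payoff: any quadric containing $\Xright$ must restrict on $H$ to a quadric through a general maximally connected chain of length $\frac{r+1}{2}$ in $H \cong \P^{r-1}$, and by \cref{theorem:maximal-rank} no such irreducible quadric exists --- so every such quadric is the \emph{reducible} quadric $H \cup \Hright$ for some hyperplane $\Hright$ containing the span $\Lambdaright$ of the lines $L_i$. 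This collapses the ``space of quadrics through a side'' to a linear system of hyperplanes through a linear subspace, making the residual divisor on $\Rmiddle$ explicit, and the claim is finished by a dimension count on the resulting incidence correspondence ($\dim \Sigma = \dim \Sigma' - 1$). Without this specific degeneration (or a worked-out alternative), your proposal is a plan rather than a proof; the dimension count alone does not establish non-dominance without bounding the actual (not merely expected) dimension of the fiber, which is precisely what the example supplies.
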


\begin{proof}[Proof of \cref{lemma:reduction-to-quadric-generic}] 
\label{sub:proof_of_cref_lemma_reduction_to_quadric_generic}
We will first translate the $F$-bundle of $\alpha:X \to \chain{a}$ into
the language of quadrics in $\P^{d-2}$, and then use \cref{criteria}
to translate the balancedness of $F_{\alpha}$ into the statement that $f(\alpha)$ is quadric-generic and has transverse residues.

Start by recalling the geometric interpretation of the directrix of the $F$-bundle for
genus zero covers.
\begin{lemma}
\label{lemma:description-of-directrix}
There is a map of vector bundles over $\chain{a}$: 
\[
\Phi \from F \to \chain{a} \times H^0(\P^{d-2}, \cO_{\P^{d-2}}(2))
\]
such that:

\begin{enumerate}
\item $\Phi_t$ sends the vector space $F_t$ to the vector space of quadrics vanishing at the
$d$ points $\alpha^{-1}(t) \subset X \subset \P^{d-2}$. 
\item For each component $R_i \subset X$, the directrix $W_i \subset F_i$ maps under $\Phi_t$ to the
vector space $H^{0}(I_{R_{i}}(2))$ of quadrics containing the rational normal curve $R_i$.
\end{enumerate}
\end{lemma}
\begin{proof}
This is contained in the proof of \Cref{lowG}, part $(b)$.
\end{proof}

Our analysis now breaks up into two cases, depending on the parity of the degree $d$. Recall the language
of left and right flags introduced in \cref{subsub-the-two-natural-filtrations}.

\begin{lemma}\label{lemma:flag-description-even}
Let $\alpha:X \to \chain{a} \in \Sigma_{d,a}$ be given. Assume that $d$ is even and $f(\alpha)=R_1 \cup \ldots \cup R_a$ is quadric-generic. For any node $p_{j} \in \chain{a}$, the 
left flag $L^\bullet_j=(L^1_j\subset L^2_j \subset \ldots \subset L^k_j)$ of $F_\alpha$ may be described as follows:
\begin{enumerate}
	\item The total space: $L_j^k$ is the vector space of quadrics in $\P^{d-2}$ containing the $d$ anchor 
	points over the $j$-th node.
	\item $L_j^{k-l}$ is the vector space of quadrics in $\P^{d-2}$ containing the sub-chain $R_{j-l+1} \cup \ldots \cup R_j$.
    \item The codimension of $L^{k-l}_j \subset L^k_j$ is $l(d-3)$. 
    \item The length $k$ of the flag $L^\bullet_j$ is equal
to the minimum of $j+1$ and $\frac{d}{2}$.
\end{enumerate}
\end{lemma}
\begin{proof}
The proof is by induction on $j$. For $j=1$, the description above follows directly from the definition of the flag
and \cref{lemma:description-of-directrix}.
Supposing the description is valid for $j$, we now prove its validity for $j+1$.

The right hand modification of the flag $L^\bullet_j$ consists of three types of spaces:
\begin{itemize}
	\item Intersections of $L^{k-l}_j :=  \{ \text{quadrics containing $R_{j-l+1},\ldots, R_{j}$}\}$ with the directrix $W_{j+1}: = \{\text{quadrics containing $R_{j+1}$}\}.$ That is, the quadrics containing the range 
	from $R_{j-l+1}$ up to $R_{j+1}$. This has the expected dimension since $f(\alpha)$ is quadric-generic.
	\item The directrix $W_{j+1}$ itself, that is, quadrics containing $R_{j+1}$.
	\item The spans $L^i_j+W_{j+1}$. But these will all be equal to the total space
	of quadrics containing the $d$ anchor points. Indeed, $W_{j+1}$ has codimension $d-3$, and the smallest space $L^1_j$
	has dimension at least $d-3$, by induction.
	If the span  $W_{j+1} +L^1_j$ were a proper subspace of $F|_{p_{j+1}}$, then the 
	intersection $L^1_j \cap W_{j+1}$ would have dimension larger than expected, which would violate being 
	quadric-generic.
\end{itemize}
\end{proof}

From \cref{criteria} and  \cref{lemma:flag-description-even} it follows straightforwardly that if $d$ is even and $f(\alpha)$ is quadric-generic 
then $\alpha$ has a balanced $F$-bundle. Conversely, if $f(\alpha)$ is not quadric-generic, it is easy to
see that the bundle of quadrics of $\alpha$ does not have transverse-flags, and therefore is not balanced. This
settles \cref{lemma:reduction-to-quadric-generic} for even degrees $d$.

Here is the analogue of \cref{lemma:flag-description-even} for odd degrees.
\begin{lemma}\label{lemma:flag-description-odd}
Let $d$ be odd and $f(\alpha)=R_1 \cup R_2 \cup \ldots \cup R_a$ be quadric-generic and have transverse residues.
The elements of the left flag $L_j^\bullet$ are of the following form:
\begin{enumerate}
\item The total space: Quadrics containing the anchor points $p_1,\ldots, p_d$ over the
$j$-th node. This is a vector space of dimension $d(d-3)/2$.
\item Quadrics containing a subchain of rational curves
$R_{j-l+1} \cup \ldots \cup R_j$. This has dimension $(d-2l) \frac{d-3}{2}$.
Note that $l< \frac{d}{2}$.
\item Quadrics containing the subchain $R_{j-l+1} \cup \ldots \cup R_j$, and whose
residual intersection with $R_{j-l}$ is equal to the residual intersection of a quadric
containing the length $\frac{d-1}{2}$ subchain
$R_{j-l-\frac{d-1}{2}} \cup \ldots \cup R_{j-l-1}$. We will call this the \emph{restriction condition on $R_{j-l}$}.
This flag element has dimension
$(d-2l-1)\frac{d-3}{2}$.
\end{enumerate}
\end{lemma}
The proof is analogous to the proof of \cref{lemma:flag-description-even}.
The third case comes from the span construction.

To establish balancedness of the $F$-bundle, \cref{criteria} tells us we need to establish transverse directrices.
This amounts to checking the following three conditions:
\begin{itemize}
	\item The linear series of quadrics containing  an arbitrary subchain $R_i \cup \ldots \cup R_j$ has the expected dimension.
	This follows from $f(\alpha)$ being quadric-generic.
	\item The linear series of quadrics containing  an arbitrary subchain $R_i \cup \ldots \cup R_j$
	and satisfying the restriction condition on $R_{i-1}$ has the expected dimension.
	In many situations, this follows from being quadric-generic , since the restriction map
	\[
		H^0(\P^n, \sh{I}_{R_i \cup \ldots \cup R_j}(2)) \to 
		H^0(R_{i-1},\sh{I}_{\text{anchor}}(2))
	\]
	is surjective as long as the length of the range $R_{i-1} \cup \ldots \cup R_j$ is at most equal to $\frac{d-1}{2}$. Indeed, the kernel corresponds to quadrics
	containing the range from $i-1$ to $j$, which we know the dimension of by being quadric-generic.

	The only extremal case to check is when the subchain $R_i \cup \ldots \cup R_j$ has length
	$\frac{d-1}{2}$. In this situation, the property we require is precisely having transverse residues (see \cref{definition:transverse-residues}).

	\item The linear series of quadrics containing 
	an arbitrary subchain $\bar{X}=R_i \cup \ldots \cup R_j$ and satisfying the restriction condition
	on both $R_{i-1}$ and $R_{j+1}$ has the expected dimension. Again, the restriction map
	\[
		H^0(\P^n, \sh{I}_{R_i \cup \ldots \cup R_j}(2)) \to 
		H^0(R_{i-1},\sh{I}_{\text{anchor}}(2)) \oplus H^0(R_{j+1},\sh{I}_{\text{anchor}}(2))
	\]
	will be surjective for subchains $\bar{X}$ of length less than $\frac{d-3}{2}$, since we can control
	the dimension of the kernel by being quadric-generic. This implies that the restriction conditions on
	$R_{i-1}$ and $R_{j+1}$ are independent.

	On the other hand, if the length of the
	subchain $\bar{X}$ is bigger than $\frac{d-3}{2}$, there are no such quadrics by the previous item. Hence,
	the only case left to consider is subchains $\bar{X}$ of length exactly $\frac{d-3}{2}$.

	In this case, let $V$ be the space of quadrics containing $\bar{X}$ and satisfying the
	restriction condition on $R_{i-1}$. The space $V$ has the expected dimension $d-3$, by the previous item.
	Moreover, the restriction map
	\[
		V  \to 
		H^0(R_{j+1},\sh{I}_{\text{anchor}}(2))
	\]
	is an isomorphism, since a non-zero kernel would violate having transverse residues. Therefore, the linear
	series of quadrics containing the subchain $R_i \cup \ldots \cup R_j$ and satisfying the restriction
	condition on both $R_{i-1}$ and $R_{j+1}$ has the expected dimension, as we wanted to show.
\end{itemize}
Furthermore, the argument above shows that for odd degrees $d$, whenever $f(\alpha)$ is quadric-generic and has transverse residues, the
$F$-bundle of $\alpha$ is balanced. Conversely, if one of these conditions fail, we can show that the $F$-bundle does 
not have transverse directrices, and therefore is not balanced.
This completes the proof of \cref{lemma:reduction-to-quadric-generic}.
\end{proof}

\section{The maximal rank problem for maximally connected chains} 
\label{sec:the_maximal_rank_problem_for_maximally_connected_chains}
The previous section demonstrated that \cref{claim:general-maximally-connected-chain-is-quadric-generic,claim:general-maximally-connected-chain-has-transverse-residues} were exactly what was necessary to conclude \cref{theorem:F-bundle} (and the Main Theorem as a corollary).
In this section, we provide these necessary facts. The key result is the following.
\begin{theorem}
\label{theorem:maximal-rank}
For a general maximally connected chain $X \in \MC_{r,n}$, the restriction map
\[
	\rho: H^0(\P^r, \sh{O}_{\P^r}(2)) \to H^0(X, \sh{O}_X(2))
\]
has maximal rank.
\end{theorem}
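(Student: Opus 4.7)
The plan is induction on the length $n$ of the chain, with a secondary induction on the dimension $r$ used inside the inductive step. The base case $n = 1$ is classical: a rational normal curve $R \subset \P^r$ is $2$-regular, so the restriction $H^0(\P^r, \sh{O}(2)) \to H^0(R, \sh{O}_R(2))$ is surjective, which settles maximal rank.

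For the inductive step, write $X_n = X_{n-1} \cup R_n$ where $Z = X_{n-1} \cap R_n$ consists of the $r+2$ anchor points on $R_n$. The short exact sequence of ideal sheaves on $\P^r$
\[
0 \to \sh{I}_{X_n/\P^r}(2) \to \sh{I}_{X_{n-1}/\P^r}(2) \to \sh{I}_{Z/R_n}(2) \to 0
\]
and its associated long exact cohomology sequence reduce the maximal-rank statement for $X_n$ to the combination of the inductive hypothesis for $X_{n-1}$ (which in particular gives the expected $h^0$ and $h^1$ of the middle term) together with maximal rank of the coboundary
\[
\phi \from H^0\bigl(\P^r, \sh{I}_{X_{n-1}/\P^r}(2)\bigr) \longrightarrow H^0\bigl(R_n, \sh{I}_{Z/R_n}(2)\bigr).
\]
So the entire inductive step comes down to understanding $\phi$.

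To establish maximal rank of $\phi$, my plan is to degenerate. The remark following \cref{definition:maximally-connected-chain} explicitly permits singular links in $\MC_{r,n}$; I would let $R_n$ specialize to a reducible rational curve --- most cleanly, a union $R_n' \cup R_n''$ of rational normal curves of lower degree spanning complementary linear subspaces of $\P^r$, with the anchor points $Z$ partitioned appropriately between them. This converts the single rank computation for $\phi$ into two smaller maximal-rank problems --- one for quadrics restricted to the extended chain $X_{n-1} \cup R_n'$ and one for quadrics on $R_n''$ subject to residual constraints --- each handled by the inductive hypothesis on $n$ or by the secondary induction on $r$ applied inside the smaller ambient projective subspace. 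The main obstacle will be to arrange the degeneration so that (a) the limiting configuration is a genuine flat limit of maximally connected chains with a coherent anchor structure, and (b) no unexpected global sections of the ideal sheaf appear in the limit, i.e., the two Horace-style pieces decouple. Once this is achieved, upper semicontinuity of cohomology propagates maximal rank of $\phi$ from the degenerate chain back to a generic $X \in \MC_{r,n}$. The parity asymmetry visible in \cref{definition:transverse-residues} suggests that the odd- and even-$r$ cases may require slightly different partitions of the anchor points, and carefully tracking the residual intersections is where the bulk of the work will be concentrated.
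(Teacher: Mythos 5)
Your plan shares the paper's overall philosophy (residual exact sequences, degeneration, double induction), but the specific set-up diverges from the paper's and has a real gap at exactly the point you flag as the ``main obstacle.''  The paper does not peel off the last link $R_n$; instead it runs the primary induction on the ambient dimension $r$ and constructs a special chain (\cref{construction:eric}) in which \emph{every} link $R_i$ for $i \geq 1$ is simultaneously degenerated to a union $L_i \cup \overline{R}_i$ of a line and a hyperplane rational normal curve, with one link $R_0$ left smooth.  The crucial design feature is that the hyperplane pieces $\overline{R}_1 \cup \ldots \cup \overline{R}_{n-1}$ form a genuine maximally connected chain in $H \cong \P^{r-1}$ (links of degree $r-1$ meeting in $r+1$ points), so the inductive hypothesis applies verbatim to the cokernel of the sequence $0 \to \sh{I}_C(1) \to \sh{I}_X(2) \to \sh{I}_{\overline{X}\subset H}(2) \to 0$, while the ``extra'' piece $C = R_0 \cup L_1 \cup \ldots \cup L_{n-1}$ is nondegenerate and therefore contributes nothing in degree one.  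What remains is a chain of strict inclusions controlled by a nontrivial secant-line lemma (\cref{lemma:induction-step-join}), which is where the analytic work actually lives.

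By contrast, your proposed degeneration of $R_n$ alone into $R_n' \cup R_n''$ spanning complementary subspaces breaks the maximally connected chain structure: neither $X_{n-1} \cup R_n'$ nor the residual piece on $R_n''$ is a maximally connected chain in any ambient $\P^m$, and in particular $R_n'$ meets $R_{n-1}$ in fewer than $r+2$ points while $X_{n-1}$ still sits in the full $\P^r$.  So the inductive hypothesis --- stated only for maximally connected chains --- cannot be invoked for either piece, and the ``secondary induction on $r$ inside the smaller ambient subspace'' does not engage, since the quadrics live in $\P^r$, not in $\Lambda'$ or $\Lambda''$.  You would need to set up a strictly more general induction hypothesis covering curves that are not maximally connected chains, at which point you are essentially rebuilding the paper's construction.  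The paper's solution is precisely a degeneration engineered so that the reduction stays inside the class of maximally connected chains (with $r$ dropping by one), which is the missing idea in your sketch.  Two minor additional notes: your worry about parity in this theorem is a red herring (the parity asymmetry only enters in translating balancedness into the maximal rank problem, not in the maximal rank theorem itself), and you should be careful that a disjoint union of rational normal curves in genuinely complementary subspaces is not a flat limit of a degree-$r$ rational normal curve, since the arithmetic genus jumps; the limiting configuration must be a connected nodal rational curve.
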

Note that \cref{claim:general-maximally-connected-chain-is-quadric-generic} follows from \cref{theorem:maximal-rank}
and the fact that the forgetful map

\[
	\operatorname{subchain}_{n_1,n_2}:\MC_{r,n} \to \MC_{r, n_2-n_1+1}
\]
sending $R_1 \cup \ldots \cup R_n$ to the subchain $R_{n_1} \cup R_{n_1+1} \cup \ldots \cup R_{n_2}$ is dominant.

We will defer the proof of \cref{claim:general-maximally-connected-chain-has-transverse-residues} to \cref{sub:transverse_residues}, since the argument will seem more natural after studying the proof of \cref{theorem:maximal-rank}.

We can rephrase \cref{theorem:maximal-rank} as a way to determine how many quadrics contain a general maximally connected chain.
\begin{lemma}
\label{lemma:euler-char}
For any maximally connected chain $X \subset \P^r$ with $r \geq 3$, we have
\[
	h^0(\sh{I}_X(2))-h^1(\sh{I}_X(2)) = \frac{(r-1)(r+2-2n)}{2}.
\]
\end{lemma}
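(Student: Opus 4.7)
The plan is to compute $\chi(\sh{I}_X(2))$ directly using additivity of Euler characteristic and recover both ingredients from standard short exact sequences.

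First I would use the ideal sheaf sequence
\[
0 \to \sh{I}_X(2) \to \sh{O}_{\P^r}(2) \to \sh{O}_X(2) \to 0
\]
which gives $\chi(\sh{I}_X(2)) = \binom{r+2}{2} - \chi(\sh{O}_X(2))$. So the work reduces to computing $\chi(\sh{O}_X(2))$.

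For this, I would use the partial normalization $\nu \from \coprod_{i=1}^n R_i \to X$ that separates the $(n-1)(r+2)$ nodes lying at the intersections $R_i \cap R_{i+1}$ of consecutive links (but leaves the internal structure of each link intact). This gives
\[
0 \to \sh{O}_X \to \nu_* \sh{O}_{\coprod R_i} \to \bigoplus_{\text{nodes}} k \to 0.
\]
Tensoring with the line bundle $\sh{O}_X(2)$ preserves exactness, and by projection formula the middle term becomes $\bigoplus_i \sh{O}_{R_i}(2)$. Hence
\[
\chi(\sh{O}_X(2)) = \sum_{i=1}^n \chi(\sh{O}_{R_i}(2)) - (n-1)(r+2).
\]

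Now each link $R_i$ is a non-degenerate, connected, degree $r$ curve in $\P^r$; such a curve is a (possibly nodal) rational normal curve and therefore has arithmetic genus $0$. Since $\sh{O}_{R_i}(2)$ has degree $2r$, Riemann--Roch gives $\chi(\sh{O}_{R_i}(2)) = 2r+1$. Substituting yields
\[
\chi(\sh{O}_X(2)) = n(2r+1) - (n-1)(r+2) = n(r-1) + r + 2,
\]
and then
\[
\chi(\sh{I}_X(2)) = \binom{r+2}{2} - n(r-1) - r - 2 = \frac{(r+2)(r-1)}{2} - n(r-1) = \frac{(r-1)(r+2-2n)}{2},
\]
as claimed.

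There is no real obstacle here; the only things to verify are that each link indeed has arithmetic genus $0$ (which is classical for non-degenerate degree $r$ curves in $\P^r$) and that the node count is exactly $(n-1)(r+2)$, which is immediate from the definition of a maximally connected chain.
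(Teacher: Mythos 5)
Your computation of $\chi(\sh{I}_X(2))$ is correct and proceeds along essentially the same lines as the paper's (the paper condenses the normalization step into the Euler characteristic $2nr - (n-1)(r+1) + 1$ for $\sh{O}_X(2)$, which matches your $n(2r+1) - (n-1)(r+2)$). However, there is a genuine gap: the lemma asserts a formula for $h^0(\sh{I}_X(2)) - h^1(\sh{I}_X(2))$, whereas you compute the full Euler characteristic $\chi(\sh{I}_X(2))$. These agree only if $h^i(\sh{I}_X(2)) = 0$ for $i \geq 2$. From the ideal sheaf sequence and the vanishing of $H^1$ and $H^2$ of $\sh{O}_{\P^r}(2)$ (using $r \geq 3$), one gets $H^2(\sh{I}_X(2)) \cong H^1(\sh{O}_X(2))$, so your argument proves
\[
h^0(\sh{I}_X(2)) - h^1(\sh{I}_X(2)) + h^1(\sh{O}_X(2)) = \frac{(r-1)(r+2-2n)}{2},
\]
and you still owe the vanishing $h^1(\sh{O}_X(2)) = 0$. (Equivalently, the paper's first step uses the long exact sequence directly to write $h^0(\sh{I}_X(2)) - h^1(\sh{I}_X(2)) = h^0(\sh{O}_{\P^r}(2)) - h^0(\sh{O}_X(2))$, and then the extra term $h^1(\sh{O}_X(2))$ appears when replacing $h^0(\sh{O}_X(2))$ by the Euler characteristic.)

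This is not a triviality, because the lemma is stated for \emph{any} maximally connected chain, not just a generic one. The paper establishes it via Serre duality: $h^1(\sh{O}_X(2)) = h^0(\omega_X(-2))$, where $\omega_X(-2)$ has degree $-r < 0$ on the two end links and degree $2 < r+2$ on each interior link. A global section must therefore vanish on $R_1$, hence at the $r+2$ nodes $R_1 \cap R_2$; restricted to $R_2$ it then has at least $r+2$ zeros but degree $2$, so vanishes there too, and this propagates down the chain. Adding this step (or an equivalent vanishing argument) would complete your proof.
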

\begin{proof}
Consider the standard short exact sequence
\[
	0 \to \sh{I}_X(2) \to \sh{O}_{\P^r}(2) \to \sh{O}_{X}(2) \to 0.
\]
Since $H^1(\sh{O}_{\P^r}(2))=0$, the long exact sequence gives
\begin{align*}
	h^0(\sh{I}_X(2))-h^1(\sh{I}_X(2)) &= h^0(\sh{O}_{\P^r}(2)) - h^0(\sh{O}_{X}(2)) \\
	&= \binom{r+2}{2} - (2nr-(n-1)(r+1)+1) +h^1(\sh{O}_X(2)) \\
	&= \frac{(r-1)(r+2-2n)}{2} +h^1(\sh{O}_X(2)).
\end{align*}
We are left with showing that $h^1(\sh{O}_X(2))=0$. 

By Serre duality, 
$h^1(\sh{O}_X(2)) = h^0(\omega_X(-2))$. The line bundle $\omega_X(-2)$
has degree $-r$ on $R_1$ and $R_n$, and degree $2$ on the other components.
Let $\sigma$ be a global section of $\omega_X(-2)$.
Since $-r<0$, it vanishes on $R_1$, and hence also on $R_1 \cap R_2$. So 
the section restricted to $R_2$ has at least $r+2$ zeros, but degree $2$. Hence it
vanishes identically on $R_2$ as well, and so on for each component. Therefore, $\sigma=0$,
as we wanted to show.
\end{proof}

Taking \cref{lemma:euler-char} into account, \cref{theorem:maximal-rank} says that 
\begin{equation}\label{dim:quad}
	h^0(\sh{I}_X(2))= \begin{cases}
	0, \text{ if }  n \geq \frac{r+2}{2},\\
	\frac{(r-1)(r+2-2n)}{2}, \text{ else.}
	\end{cases}
\end{equation}

\begin{proof}[Proof of \cref{theorem:maximal-rank}]
We will use induction on the ambient dimension $r$. The base case $r=3$ translates into the following three simple facts:
\begin{itemize}
	\item A twisted cubic is contained in a three dimensional linear system of quadrics.
	\item There is a unique quadric containing a pair of twisted cubics meeting at five points.
	\item By picking the third link $R_{3}$ generically,  the resulting length 3 maximally connected chain $X$ of twisted cubics is not contained in any quadric.
\end{itemize}

\begin{figure}
\begin{center}
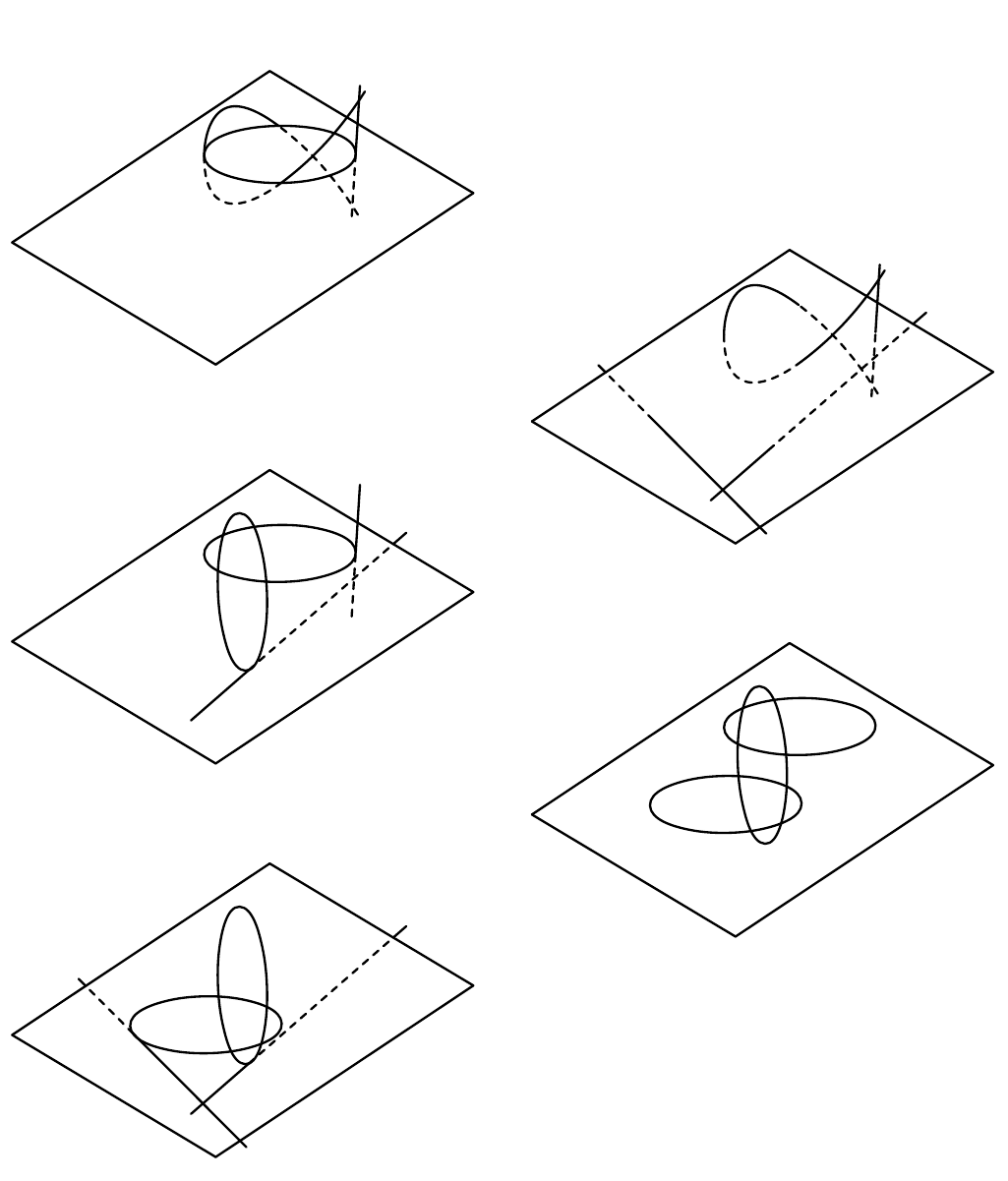
\caption{The maximally connected chain $X$ used in the induction argument.}
\label{figure:erics-curve}
\end{center}
\end{figure}

For the induction step, we use the following key construction suggested to us by Eric Larson:
\begin{construction}
\label{construction:eric}
$X = C \cup \overline{X} = R_0 \cup R_1 \cup \ldots \cup R_{n-1}$ is a maximally connected chain, constructed as follows.
\begin{itemize}
	\item Take $R_0 \subset \P^r$ a smooth rational normal curve.
	\item Choose a general hyperplane $H \subset \P^r$.
	\item Let $L_1$ be a general 2-secant line to $R_0$.
	\item Let $\overline{R}_1 \subset H$ be a rational normal curve (of degree $r-1$) containing the $r$ points $R_0 \cap H$ and the point $p : = L_1 \cap H$. Set $R_1=L_1\cup_p \overline{R}_1$.
	\item Inductively, choose $\overline{R}_{i+1} \subset H$ a general rational normal curve meeting $\overline{R}_i$ in
	$r+1$ general points. That is, the curve $\overline{X} := \overline{R}_1 \cup \overline{R}_2 \cup \ldots \cup \overline{R}_{n-1}$ is a maximally connected chain of length $n-1$ in $H= \P^{r-1}$.
	\item Inductively choose $L_i$ to be a general line joining $L_{i-1}$ and $\overline{R}_i$. Set
	$R_i= L_{i} \cup \overline{R}_i$.
	\item Let $C = R_0 \cup L_1 \cup L_2 \cup \ldots \cup L_{n-1}$.
\end{itemize}
See \cref{figure:erics-curve} for a diagrammatic representation of this construction. 
\end{construction}

We will show that $X$ as above satisfies the maximal rank condition for quadrics. That is, we will show: 
\begin{itemize} 
\item If $n> \frac{r+2}{2}$, then there are no quadrics containing $X$. 
\item Otherwise there is a $\frac{(r-1)(r+2-2n)}{2}$ dimensional space of quadrics containing $X$.
\end{itemize}
We do this by looking at the sequence
\[
	0 \to \sh{O}_{\P^r}(1)= \sh{I}_H(2) \to \sh{O}_{\P^r}(2) \to \sh{O}_H(2) \to 0
\]
and tensoring it with $\sh{I}_X$ to get
\begin{equation}\label{quad:seq}
	0 \to \sh{I}_{C \subset \P^r}(1) \to \sh{I}_{X \subset \P^r}(2) \to \sh{I}_{\overline{X} \subset H}(2) \to 0.
\end{equation}
 By induction we know that $\overline{X} \subset H$ satisfies the maximal rank condition, and
 the sequence \eqref{quad:seq} will let us understand the quadrics containing $X$.

\subsubsection*{The case $n > \frac{r+2}{2}$} 
\label{ssub:large-n}
Then $n-1 \geq \frac{(r-1)+2}{2}$ and by induction and \eqref{dim:quad}, we can assume that 
$h^0(\sh{I}_{\overline{X} \subset H}(2) )=0$. Hence, 
\[
	h^0(\sh{I}_{X \subset \P^r}(2) ) = h^0(\sh{I}_{C\subset \P^r}(1))
\]
and the latter is zero, since $C$ is not contained in any hyperplane (the component $R_0$ is non-degenerate). Hence, $h^0(\sh{I}_{X \subset \P^r}(2) )=0$, as we wanted to show.
\subsubsection*{The case $n \leq \frac{r+2}{2}$} 
We have $n-1 < \frac{(r-1)+2}{2}$, and by induction 
we can assume
\[
	h^0(\sh{I}_{\overline{X} \subset H}(2)) = \frac{(r-2)(r+3-2n)}{2} = \frac{(r-1)(r+2-2n)}{2} +n-2.
\]
Hence, we want to show that the inclusion
\begin{equation}\label{inclusion}
	H^0(\sh{I}_{X \subset \P^r}(2)) \subset H^0(\sh{I}_{\overline{X} \subset H}(2))
\end{equation}
has codimension $n-2$. The inclusion \eqref{inclusion} factors as
\begin{equation}\label{inclusion2}
	H^0(\sh{I}_{X \subset \P^r}(2)) \subset H^0(\sh{I}_{\overline{X} \cup R_0 \cup L_1}(2)) \subset H^0(\sh{I}_{\overline{X} \subset H}(2))	
\end{equation}
and the latter inclusion is actually an isomorphism, since it fits in the sequence
\[
	H^0(\sh{I}_{R_0\cup L_1}(1)) \to H^0(\sh{I}_{\overline{X} \cup R_0 \cup L_1}(2)) \to H^0(\sh{I}_{\overline{X} \subset H}(2))	\to H^1(\sh{I}_{R_0\cup L_1}(1))
\]
where the first and last groups vanish, as deduced from the sequence
\[
\xymatrix{
	0\ar[r] & H^0(\sh{I}_{R_0\cup L_1}(1)) \ar[r]&  H^0(\sh{O}_{\P^r}(1)) \ar[r]& 
	 H^0(\sh{O}_{R_0 \cup L_1}(1))\ar[dll]	&\\ & H^1(\sh{I}_{R_0\cup L_1}(1)) \ar[r]& 
	H^1(\sh{O}_{\P^r}(1))=0	}
\]
and the fact that the map $H^0(\sh{O}_{\P^r}(1)) \to
H^0(\sh{O}_{R_0 \cup L_1}(1))$ is an isomorphism (in other words, $R_0 \cup L_1$ is linearly
normal, i.e., it is embedded with the full linear series).

So we have translated our problem into showing that the first inclusion in \eqref{inclusion2},
that is	$H^0(\sh{I}_{X \subset \P^r}(2)) \subset H^0(\sh{I}_{\overline{X} \cup R_0 \cup L_1}(2))$,
has codimension $n-2$. But this factors naturally in a sequence of $n-2$ inclusions:
\begin{equation}
\label{eq:chain-of-inclusions}
\begin{split}
	H^0(\sh{I}_{X}(2))=H^0(\sh{I}_{\overline{X} \cup R_0 \cup L_1 \cup L_2 \cup \ldots L_{n-1}}(2)) 
	&\subset H^0(\sh{I}_{\overline{X} \cup R_0 \cup L_1 \cup L_2 \cup \ldots \cup L_{n-2}}(2)) \\
	&\subset \ldots\\
	&\subset H^0(\sh{I}_{\overline{X} \cup R_0 \cup L_1 \cup L_2}(2))  \\
	&\subset H^0(\sh{I}_{\overline{X} \cup R_0 \cup L_1}(2)) 
\end{split}
\end{equation}

So it is enough to show the following.
\begin{proposition}
\label{claim:strict-inclusion}
Each inclusion in \eqref{eq:chain-of-inclusions} is strict. 
In other words, 
for each $i=2, \ldots, n-1$, there exists a quadric containing 
$\overline{X} \cup R_0 \cup L_1 \cup \ldots \cup L_{i-1}$,
which does not contain the line $L_i$.
\end{proposition}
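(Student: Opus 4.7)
The plan is to convert each strictness assertion into a base-locus statement and then verify it by lifting a suitable quadric from the hyperplane $H$. Write $A_j := \overline X \cup R_0 \cup L_1 \cup \ldots \cup L_j$ and $V_j := H^0(\sh I_{A_j}(2))$. By the generic choice of $L_i$ in \cref{construction:eric}, the scheme-theoretic intersection $L_i \cap A_{i-1}$ consists of precisely the two anchor points $q_{i-1} := L_{i-1} \cap L_i$ and $p_i := \overline R_i \cap L_i$, so that the ideal sheaf sequence
\[
0 \to \sh I_{A_i}(2) \to \sh I_{A_{i-1}}(2) \to \sh O_{L_i} \to 0
\]
identifies strictness of the $i$-th inclusion with non-vanishing of the residual-evaluation map $V_{i-1} \to H^0(L_i, \sh O_{L_i}) \cong k$. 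Equivalently, it is enough to exhibit a quadric $Q \in V_{i-1}$ whose restriction to $L_i$ is not identically zero, i.e., a third point $r_i \in L_i \setminus \{q_{i-1}, p_i\}$ lying outside the base locus of $|V_{i-1}|$.

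To produce such a $Q$, the plan is to lift from $H$ via the sequence
\[
0 \to \sh I_{C \subset \P^r}(1) \to \sh I_{A_{i-1} \subset \P^r}(2) \to \sh I_{\overline X \subset H}(2) \to 0,
\]
where $C := R_0 \cup L_1 \cup \ldots \cup L_{i-1}$ and $A_{i-1} \cap H = \overline X$.  The induction on the ambient dimension $r$ in the proof of \cref{theorem:maximal-rank} furnishes a space $H^0(\sh I_{\overline X \subset H}(2))$ of the expected dimension.  Writing a lift in the form $Q = h \cdot \ell + Q''$, where $h$ is an equation cutting out $H$ and $Q''$ restricts on $H$ to a chosen $Q'$, the requirement $Q \supset C$ imposes linear constraints on $\ell$ at each point of $C \setminus H$; the remaining freedom controls $Q$ off $H$.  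Since $r_i$ lies off $H$, its value $Q(r_i) = h(r_i)\ell(r_i) + Q''(r_i)$ is not forced to be zero by any existing containment, and for a sufficiently generic choice of $(Q', \ell)$ the lift $Q$ avoids vanishing at $r_i$, which is exactly what we need.

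The main obstacle is establishing the vanishing of the obstruction group $H^1(\sh I_C(1))$ so that every $Q'$ actually admits a lift, together with the compatible analysis of the linear constraints on $\ell$.  For $i = 2$ this was already shown by the authors via the linear normality of $R_0 \cup L_1$, but for larger $i$ the analogous vanishing for $R_0 \cup L_1 \cup \ldots \cup L_{i-1}$ must be proved.  The hardest step is thus to verify that adding each generic line $L_j$ ($j < i$) preserves the degree-one linear normality of the curve; I would prove this by an inner induction on $j$, using a short exact sequence for the union $R_0 \cup L_1 \cup \ldots \cup L_{j-1}$ together with the generic choice of $L_j$ to see that the new attaching point imposes an independent linear condition.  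Once this vanishing is in hand, the lifting argument produces a quadric $Q \in V_{i-1}$ with $Q(r_i) \neq 0$, completing the proof and closing the induction for \cref{theorem:maximal-rank}.
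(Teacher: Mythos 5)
Your plan diverges from the paper's argument and contains a gap that would require a different idea to fix.

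The key problem is that the linear normality claim you rely on is false once $i \geq 3$. The curve $C = R_0 \cup L_1 \cup \ldots \cup L_{i-1}$ is connected, nodal, nondegenerate, with $p_a(C) = 1$ and degree $r + (i-1)$. A direct computation with $\omega_C(-1)$ shows $h^1(\sh O_C(1)) = 0$, so $h^0(\sh O_C(1)) = r + i - 1$. But $h^0(\sh O_{\P^r}(1)) = r + 1$, so the restriction map $H^0(\sh O_{\P^r}(1)) \to H^0(\sh O_C(1))$ cannot be surjective once $i \geq 3$; indeed $h^1(\sh I_C(1)) = i - 2$. The basic phenomenon is that each new line $L_j$ meets $C$ in a single point, adding one new degree-one section without adding any ambient linear forms, so linear normality is destroyed immediately after $j = 1$. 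Your inner induction to preserve it cannot succeed.

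Separately, even if one could lift a generic $Q'$, the claim that ``for a sufficiently generic choice of $(Q', \ell)$ the lift $Q$ avoids vanishing at $r_i$'' is asserted without justification, and this is precisely the crux of the proposition. The constraints $Q \supset C$ already force $Q$ to vanish at the anchor point $q_{i-1} = L_{i-1} \cap L_i$, and $Q$ restricted to $\overline X$ forces vanishing at $p_i$; whether the remaining freedom actually lets you pick $Q|_{L_i} \not\equiv 0$ is a nontrivial question about the geometry of the linear system. The paper handles exactly this point via \cref{lemma:induction-step-join}: a careful analysis of the join $J(L,R)$, the tangent cone $C_p R$, and the fact that the singular locus of a quadric is a linear space, culminating in the key bookkeeping that a general element of the new system $V_i$ is not singular all along $L_i$ (which is what makes the iteration go through). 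Your proposal does not contain a substitute for this geometric input, and the cohomological lifting framework you set up cannot by itself supply it.
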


 We establish \cref{claim:strict-inclusion} below, completing the proof of
 \cref{theorem:maximal-rank}.
\end{proof}


The key lemma needed for the proof of \cref{claim:strict-inclusion}  is the following:
\begin{lemma}
\label{lemma:induction-step-join}
Let $L \subset \P^r$ be a line, and $H\subset \P^r$ a 
hyperplane transverse to $L$, and $R \subset H$ a rational normal curve.
Let $V \subset H^0(\sh{I}_{L \cup R}(2))$ be a non-empty linear
series. Assume:
\begin{enumerate}[label=(\alph*)]
	\item \label{condition:irreducible-quadric} no element of $V$ is a reducible quadric containing $H$,
	\item the general element $Q_\text{gen} \subset \P^r$ of $V$ is such that
	$L \not\subset \Sing Q_\text{gen}$.
\end{enumerate}
Then for general choices of $p\in L$ and $q \in R$:
\begin{itemize}
\item The line $L_\text{gen} = \overline{pq}$
is not contained in $Q_\text{gen}$. 
\item Among the quadrics $Q \in V$ containing
$L_\text{gen}$, a general one does not contain $L_\text{gen}$ in its singular locus.
\end{itemize}
\end{lemma}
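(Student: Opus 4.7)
I will prove the two assertions separately by contradiction, using hypotheses (a) and (b) decisively in each.

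For the first assertion, suppose $\overline{pq} \subset Q_\text{gen}$ for all $(p, q) \in L \times R$. Letting $B$ denote the polarization of $Q_\text{gen}$, this amounts to $B(p, q) = 0$ identically on $L \times R$. For each $p \in L$, the linear form $B(p, \cdot)$ then vanishes on $R$, hence on $\langle R \rangle = H$, so $B(p, \cdot) = c(p)\ell_H$ where $\ell_H$ is the defining form of $H$ and $c \from L \to k$ is linear. The identity $B(p, p) = 0$ (from $L \subset Q_\text{gen}$) becomes $c(p)\ell_H(p) = 0$, which forces $c \equiv 0$ since $\ell_H$ is nonzero on a dense open of $L$ (by transversality of $L$ and $H$). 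Hence $B(p, \cdot) \equiv 0$ for every $p \in L$, i.e., $L \subset \Sing Q_\text{gen}$, contradicting (b).

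For the second assertion, fix such a general $(p, q)$ and abbreviate $V_{p,q} := \{Q \in V : L_\text{gen} \subset Q\}$. Suppose the general $Q \in V_{p,q}$ satisfies $L_\text{gen} \subset \Sing Q$; since this is a closed linear condition, it actually holds throughout $V_{p,q}$. The natural object to consider is the incidence
\[
I = \{(Q, p', q') \in V \times L \times R : \overline{p'q'} \subset \Sing Q\}.
\]
By hypothesis, the fiber of $I \to L \times R$ over a generic base point is the codimension-one subspace $V_{p',q'} \subset V$, so $I$ has a component $I_0$ of dimension $\dim V + 1$. Projecting $I_0$ to $V$, the fiber over $Q$ is $(L \cap \Sing Q) \times (R \cap \Sing Q)$. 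If the image $W \subset V$ were a proper subvariety, the generic fiber would have dimension at least $2$, forcing both $L \subset \Sing Q$ and $R \subset \Sing Q$, hence $\Sing Q \supseteq \langle L, R \rangle = \P^r$ (since $L \not\subset H$), so $Q = 0$; this pins $W = \{0\}$ and consequently $\dim V \leq 1$, a case in which part 2 is vacuous by part 1. Therefore $W = V$, and every $Q \in V$ satisfies $L \subset \Sing Q$ or $R \subset \Sing Q$. Irreducibility of the linear space $V$ forces $V \subset V_A := \{Q : L \subset \Sing Q\}$ or $V \subset V_B := \{Q : R \subset \Sing Q\}$. The first possibility contradicts (b); the second gives $H = \langle R \rangle \subset \Sing Q$ for every $Q \in V$, whence every nonzero $Q \in V$ is a scalar multiple of $\ell_H^2$, reducible with $H$ as a double component, contradicting (a).

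The main technical point is the dimension estimate on the incidence $I$: the generic failure of the claim must be repackaged into a global statement of dimension $\dim V + 1$, and then the irreducibility of the linear series $V$ is essential in pinning $V$ inside $V_A$ or $V_B$ so that hypotheses (a) and (b) can be invoked directly.
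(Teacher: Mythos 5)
Your proof is correct, but it takes a genuinely different route from the paper's. For the first bullet the paper argues geometrically: if $Q_\text{gen}$ contains $\overline{pq}$ for general $(p,q)$ then it contains the join $J(L,R)$, whose tangent space at $p\in L$ is $r$-dimensional because it contains the cone $C_pR$; hence $Q_\text{gen}$ is singular along $L$. You instead compute with the polarized bilinear form $B$ directly: $B(p,q)\equiv 0$ on $L\times R$ forces $B(p,\cdot)=c(p)\ell_H$, and transversality of $L$ to $H$ together with $B(p,p)=0$ kills $c$, giving $L\subset\Sing Q_\text{gen}$. Your version is more elementary and avoids the tangent-cone argument, though it loses the geometric picture of the join. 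For the second bullet the paper fixes $p$ with $Q_\text{gen}$ smooth at $p$ and splits into two cases depending on whether the hyperplanes $V_{p,q}$ vary as $q$ moves: if they do, they sweep out $V$; if not, every quadric in $V_{p,q}$ contains the cone $C_pR$ and one uses non-degeneracy of $R$ (the linearity of $\Sing Q$) to find a smooth point on $R$. You instead set up the incidence $I\subset V\times L\times R$ of triples with $\overline{p'q'}\subset\Sing Q$ and run a dimension count, using linearity of $\Sing Q$ to identify the fiber over $Q$ as $(L\cap\Sing Q)\times(R\cap\Sing Q)$ and irreducibility of the linear series $V$ to pin $V$ inside one of $V_A,V_B$. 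This is slicker in that it avoids the case split, but it is worth being explicit that the contradiction hypothesis must be taken to hold on a dense open of $(p',q')\in L\times R$ (not just a single pair), which is justified because the locus of failure is constructible; and that the $\dim V\le 1$ boundary case must be disposed of separately, which you do. Both proofs invoke hypotheses (a) and (b) at the same structural points, and both are valid.
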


\begin{proof}[Proof of \cref{claim:strict-inclusion} assuming \cref{lemma:induction-step-join}]
We want to show that each inclusion in \eqref{eq:chain-of-inclusions} is strict.
Let us start with the last one. We set $V= H^0(\sh{I}_{\overline{X}\cup R_0 \cup L_1}(2))$, $L=L_1$, and
$R=\overline{R}_2$. None of the elements of $V$ are reducible quadrics, because they contain the non-degenerate curve $R_0$. Thus, condition \ref{condition:irreducible-quadric} is met.

Let us check that the general quadric in $V$ is not singular all along
$L_1$. It is enough to show that the restriction
of the quadric to $H$ is smooth at $L_1 \cap H$. The restriction of the linear system
$H^0(\sh{I}_{\overline{X}\cup R_0 \cup L_1 \subset \P^r}(2))$ to $H$ is 
$H^0(\sh{I}_{\overline{X} \subset H}(2))$, which only depends on $\overline{X}$. The general quadric in $H$ containing $\overline{X}$ cannot be singular all along $\overline{R}_1$, because
$\overline{R}_1$ is a non-degenerate curve in $H$, and the singular locus of a quadric is always a linear space. Hence, as long as we pick $L_1 \cap H$ to be a general point in $\overline{R}_1$, we will be fine. We can do this, as a matter of fact in \cref{construction:eric} we may first choose $\overline{X}$ and any $r+1$ distinct points in $\overline{R}_1$, and then we can find an $L_1 \cup R_0 \subset \P^r$
meeting $H$ exactly at these points.

The hypotheses of \cref{lemma:induction-step-join} are satisfied, so its conclusion says that
for a general choice of $L_2$, the inclusion \eqref{eq:chain-of-inclusions} will be strict.

Moreover, \cref{lemma:induction-step-join} also says that a general quadric in  
$H^0(\sh{I}_{\overline{X}\cup R_0 \cup L_1 \cup L_2}(2))$ is not singular all along $L_2$. Thus we conclude \cref{claim:strict-inclusion} by iteratively applying \cref{lemma:induction-step-join}.

\end{proof}

Hence, we will be done with the proof of \cref{theorem:maximal-rank} as soon as 
we establish \cref{lemma:induction-step-join}.

\begin{proof}[Proof of \cref{lemma:induction-step-join}]
For the first part, assume that $L_\text{gen}\subset Q_\text{gen}$. Then $Q_\text{gen}$ contains 
the join $J(L,R)$. But the tangent space to $J(L,R)$ at $p \in L$
is $r$-dimensional, because the cone $C_{p}R$ with vertex $p$ over $R$ is contained in
$J(L,R)$. Hence, $Q_\text{gen}$ is singular along $L$ as well, but we assumed this was not the case -- contradiction.

Now let us show that among the quadrics in $V$ containing $L_\text{gen}=\overline{pq}$, a general
one does not contain $L_\text{gen}$ in its singular locus. Pick $p \in L$ such that 
the general element of $V$ is not singular at $p$ (this can be done by our assumption). Now there are two cases.
\begin{description}
\item[Case 1] One possibility is that as we vary $q\in R$, we get a non-trivial family of
hyperplanes consisting of quadrics in $V$ containing $\overline{pq}$. But then the union of these hyperplanes
is the whole vector space $V$, and we know that the general quadric in $V$ is smooth at $p$. Hence, it is smooth
at the general point of the line $\overline{pq}$ as well.

\item[Case 2] Otherwise, every quadric in $V$ containing $\overline{pq}$ also contains the cone $C_pR$. 
Pick any of these quadrics, $Q$. Then $Q$ cannot be singular all along $R$, because the
singular locus of $Q$ is a linear space, and $R$ spans the hyperplane $H$. Hence, for some
$q \in R$, the quadric $Q$ is smooth at $q$. Take the line $\overline{pq}$  which is contained in $C_{p}R$, and hence in $Q$. As $Q$ is smooth at $q$, it is smooth at the general point of 
$\overline{pq}$.
\end{description}
In any case, we still conclude \cref{lemma:induction-step-join}.
\end{proof}

\subsection{Transverse residues} 
\label{sub:transverse_residues}

\newcommand{\Xright}{X_{\text{right}}}
\newcommand{\Hright}{H_{\text{right}}}
\newcommand{\Lambdaright}{\Lambda_{\text{right}}}

\newcommand{\Hleft}{H_{\text{left}}}
\newcommand{\Lambdaleft}{\Lambda_{\text{left}}}
\newcommand{\Xleft}{X_{\text{left}}}

\newcommand{\Rmiddle}{R_{\text{middle}}}

We now adapt \cref{construction:eric} to prove \cref{claim:general-maximally-connected-chain-has-transverse-residues}.

\begin{proof}[Proof of \cref{claim:general-maximally-connected-chain-has-transverse-residues}]
From now on the ambient dimension $r$ will be odd. We want to show that a general maximally connected chain $X\subset \P^r$ of length $r+2$ has transverse residues, as in \cref{definition:transverse-residues}. This means we
partition $X$ in three groups: the middle link $\Rmiddle$, and the $\frac{r+1}{2}$ links coming before ($\Xleft$) and after ($\Xright$) it. We want to show that a pair of
quadrics $Q_{\text{left}}$ and $ Q_{\text{right}}$ containing $\Xleft$ and $\Xright$ respectively cannot have the same residual intersections with $\Rmiddle$.

It is enough to exhibit a single example of such an $X$. Consider an $X$ with the following properties:
\begin{itemize}
    \item $\Rmiddle$ is a smooth rational normal curve
    \item $\Rmiddle \cup \Xright$ and $\Xleft \cup \Rmiddle$ are exactly as in \cref{construction:eric}. That is, to construct $\Xright$, choose a hyperplane $H$ and let $R_i=L_i \cup \overline{R}_i$, where:
    \begin{itemize}
        \item $L_1$ is a general secant line to $\Rmiddle$
        \item $\overline{R}_1$ is a general rational normal curve in the hyperplane $H$ containing $\Rmiddle \cap H$ and $L_1 \cap H$
        \item $\overline{R}_1\cup \overline{R}_2\cup \overline{R}_3 \cup \ldots \cup \overline{R}_{\frac{r+1}{2}}\subset H$ form a 
        general maximally connected chain in $H$.
        \item $L_{i}$ is a general line joining $L_{i-1}$ and $\overline{R}_{i}$ for $i=2,\ldots,\frac{r+1}{2}$.
    \end{itemize}
\end{itemize}

 We claim that such a curve $X$ has transverse residues.    Let $\Lambdaright$ be the linear span of $L_1,L_2,\ldots, L_{\frac{r+1}{2}}$. Any quadric containing $\Xright$ must split as $H \cup \Hright$, where $\Hright$ is a hyperplane containing $\Lambdaright$. Otherwise, the restriction of such quadric to $H$ would contain a general maximally connected chain of length $\frac{r+1}{2}$, which \cref{theorem:maximal-rank} says cannot exist.

    The $r-2$ points of residual intersection of the reducible quadric $ H\cup \Hright $  with $\Rmiddle$ is the set $\Rmiddle\cap \Hright$ minus the pair of points in $L_1 \cap \Rmiddle= \Lambdaright \cap \Rmiddle$. Therefore, to see how the residual points vary in $\Rmiddle$, we only need to remember the linear space $\Lambdaright$.

    Define $\Lambdaleft$ analogously. Note that we may choose the pair of
    $(\Lambdaright, \Lambdaleft)$ arbitrarily, as long as they meet $\Rmiddle$ in a pair of points each. This is because when performing \cref{construction:eric}, we may first choose the lines $L_i$ arbitrarily, and then choose fitting $\overline{R}_i$.

So we want to show that for $\Lambdaright$ and $\Lambdaleft$ generic, there are no hyperplanes $\Hleft$ and $\Hright$ containing the respective linear spaces $\Lambdaleft, \Lambdaright$ whilst having the same residual intersection on $\Rmiddle$. Consider the incidence correspondence:
\begin{align*}
\Sigma := \big\{ (\Lambdaleft,\Lambdaright&, \Hleft,\Hright, p_1,\ldots ,p_{r-2}) \mid\\
& \#(\Lambdaleft \cap \Rmiddle)=\#(\Lambdaright \cap \Rmiddle)=2, \\
& \Lambdaleft \subset \Hleft, \, \Lambdaright \subset \Hright, \\
& \{p_1,\ldots ,p_{r-2}\} \cup (\Lambdaleft \cap \Rmiddle) = \Hleft \cap \Rmiddle, \\
& \{p_1,\ldots ,p_{r-2}\} \cup (\Lambdaright \cap \Rmiddle) = \Hright \cap \Rmiddle
\big\}
\end{align*}
 That is, the points $p_i$ are the (common) residual intersection of the hyperplanes $\Lambda\subset H$ with
 $\Rmiddle$. The incidence correspondence $\Sigma$ admits a projection map to: 
 \[
 \Sigma'= \big\{(\Lambdaleft, \Lambdaright) \mid
 \#(\Lambdaleft \cap \Rmiddle)=\#(\Lambdaright \cap \Rmiddle)=2
 \big\}
 \]

 We want to show that the map $\Sigma \to \Sigma'$ is not dominant. We do this by showing that $\dim \Sigma < \dim \Sigma'$, as follows.
\begin{align*}
\dim \Sigma = & \,r-2 & \text{ (choose the $p_i$'s)}\\
& +2 + 2 & \text{ (choose $\Lambdaleft\cap \Rmiddle$, $\Lambdaright \cap \Rmiddle$)} \\
& + 0 + 0 & \text{ ($\Hleft, \Hright$ are the span of $p_i$'s and $\Lambda\cap \Rmiddle$)}\\
& + \frac{(r-3)(r-1)}{4} & \text{ (choose $\Lambdaright\subset \Hright$ containing $\Lambdaright \cap \Rmiddle$)}\\
&+\frac{(r-3)(r-1)}{4} & \text{ (same for $\Lambdaleft$)}\\
 =& \frac{(r-1)^2}{2} +3 &
\end{align*}
while
\begin{align*}
\dim \Sigma' = &\, 2+2  & \text{ (choose pair points $\Lambda \cap \Rmiddle$)}\\
& +\frac{(r-1)^2}{4} & \text{ (choose $\Lambdaright$ containing $\Lambdaright \cap \Rmiddle$)}\\
&+\frac{(r-1)^2}{4} & \text{ (same for $\Lambdaleft$)}\\
 = &\frac{(r-1)^2}{2}+4\\
 =&\dim \Sigma+1 &
\end{align*}
which is as we desired. This completes the proof of \cref{claim:general-maximally-connected-chain-has-transverse-residues}.
\end{proof}



\bibliographystyle{amsalpha}
\bibliography{GenericCasnatiEkedahl}

\end{document}